\newtheorem{teo}[equation]{Theorem}
\newtheorem{defin}[equation]{Definition}
\newtheorem{remark}[equation]{Remark}
\newtheorem{prop}[equation]{Proposition}
\newtheorem{cor}[equation]{Corollary}
\newtheorem{lemma}[equation]{Lemma}
\newtheorem{ese}[equation]{Example}
\newtheoremstyle{named}{}{}{\itshape}{}{\bfseries}{.}{.5em}{\thmnote{#3}#1}
\theoremstyle{named}
\newcommand{\meno}{^{-1}}
\newcommand{\misu}{\mathscr{M}}
\newcommand{\liu}{\mathfrak{u}}
\newcommand{\liek}{\mathfrak{k}}
\newcommand{\lieg}{\mathfrak{g}}
\newcommand{\liep}{\mathfrak{p}}
\newcommand{\lia}{\mathfrak{a}}
\newcommand{\supp}{\operatorname{supp}}
\newcommand{\grad}{\operatorname{grad}}
\newcommand{\vacuo}{\emptyset}
\newcommand{\enf}{\emph}
\newcommand{\desudt}[1] []      {\dfrac {\mathrm {d} #1 }{\mathrm {dt}}}
\newcommand{\desudtzero}        {\desudt \bigg \vert _{t=0} }
\newcommand{\deze}        {\desudt \bigg \vert _{t=0} }
\newcommand{\desus}{\dfrac  {\mathrm{d}}{\mathrm {ds}} \bigg \vert _{s=0} }
\newcommand{\deriva}[2]{\dfrac {\mathrm{d}}{\mathrm {d#1}} \bigg \vert _{#2}}
\newcommand{\Ad}{\operatorname{Ad}}
\newcommand{\sx}{\langle}
\newcommand{\xs}{\rangle}
\newcommand{\scalo}{\sx \cdot , \cdot \xs}
\newcommand{\relint}{\operatorname{relint}}
\newcommand{\cd}{\cdot}
\renewcommand{\setminus}{-}
\newcommand{\ra}{\rightarrow}
\newcommand{\lra}{\longrightarrow}
\newcommand{\C}{\mathbb{C}}
\newcommand{\R}{\mathds{R}}
\newcommand{\om}{\omega}
\renewcommand{\phi}{\varphi}
\newcommand{\fun}{\mathfrak{F}}
\newcommand{\spaz}{\mathscr{M}}
\newcommand{\mume}{\fun\meno(0)}
\newcommand{\x}{{v}}
\newcommand{\meo}{\end{document}}
\newcommand{\campo}{{\funp}^{\#}}
\newcommand{\funp}{\tilde \fun_\liep}
\begin{document}
\title[Convexity properties of gradient maps]{Convexity properties of gradient maps associated to real reductive representations}
\author{Leonardo Biliotti}
\address{(Leonardo Biliotti) Dipartimento di Scienze Matematiche, Fisiche e Informatiche \\
          Universit\`a di Parma (Italy)}
\email{leonardo.biliotti@unipr.it}
\begin{abstract}
Let $G$ be a connected real reductive Lie group  acting linearly on a finite dimensional vector space $V$ over $\R$. This action admits a Kempf-Ness function and so we have an associated gradient map. If $G$ is Abelian we explicitly compute the image of $G$ orbits under the gradient map,  generalizing a result proved by Kac and Peterson \cite{kacp}. A similar result  is proved for the gradient map associated to the natural $G$ action on $\mathbb P(V)$. We also investigate the convex hull of the image of the gradient map restricted on the closure of $G$ orbits. Finally, we give a new proof of the Hilbert-Mumford criterion for real reductive Lie groups avoiding any algebraic result.
\end{abstract}
 \keywords{Gradient maps; real reductive representations, real reductive Lie groups, geometric invariant theory}

%
 \subjclass[2010]{22E45,53D20; 14L24}
\thanks{The author was partially supported by PRIN  2015
   ``Variet\`a reali e complesse: geometria, topologia e analisi armonica'' and GNSAGA INdAM. }
\maketitle
\section{Introduction}
Let $U$ be a compact connected Lie group and let $U^\C$ be its
  complexification.  Let $(Z,\omega)$ be a K\"ahler manifold on which
  $U^\C$ acts holomorphically. Assume that $U$ acts in a Hamiltonian
  fashion with momentum map $\mu:Z \lra\liu^*$.  This means that $\om$
  is $U$-invariant, $\mu$ is $U$-equivariant and for any $\beta \in \liu$
  we have $
    d\mu^\beta = i_{\beta_Z} \om,$
  where $\mu^\beta (x)= \mu (x)(\beta)$ and $\beta_Z$ denotes the
  fundamental vector field on $Z$ induced by the action of $U$.  It is
  well-known that the momentum map represents a fundamental tool in
  the study of the action of $U^\C$ on $Z$. Of particular importance
  are convexity theorems \cite{atiyah-commuting,guillemin-sternberg-convexity-1,kirwan}, which depend on the fact
  that the functions $\mu^\beta$ are Morse-Bott with even indices. Assume that $U$ is a compact torus.  If $Z$ is compact,  then Atiyah proved a convexity Theorem along $U^\C$ orbits \cite{atiyah-commuting}.  Recently, Biliotti and Ghigi \cite{bilio-ghigipr} proved a convexity Theorem along orbits in a very general setting using only so-called Kempf-Ness function.
The original setting for Kempf-Ness function is the following: let $V$ be a unitary representation of $U$. For $x = [v] \in \mathbb P (V)$ and $g\in U^\C$ we set $\Psi(x,g)=\log \frac{\parallel g v \parallel}{\parallel v \parallel }$  \cite{kempf-ness}. The behavior of the corresponding gradient map is encoded in the $\Psi$.  In 1990 Richardson and Slodowoy \cite{rs} proved that the Kempf-Ness Theorem extends to the case of real reductive representations. This pioneering work has allowed to prove many results exploiting tools from geometric invariant theory.  This is the perspective taken, amongst many others, in the papers \cite{berlein-jablonki,lau1,lau2}.
Recently  Der\'e and Lauret \cite{dl} use nice convexity properties of the moment map for the variety of nilpotent Lie algebras to investigate which nilpotent Lie algebras admit a Ricci negative solvable extension.  This motivated us to investigate convexity properties of gradient maps associated  to real reductive representations. We point out that  there exist several non equivalent definitions of real reductive Lie group in the literature (\cite{borel,harish-chandra,knapp-beyond,wallach}). Since we are interested in real reductive representations,  we restricted ourselves to linear groups, i.e., subgroups of $\mathrm{GL}(V)$, where $V$ is a finite dimensional real vector space. By a Theorem of Mostow \cite{mostow-self}, if $G\subset \mathrm{GL}(n,\R)$ is closed under transpose then it is compatible with respect to the Cartan decomposition of $\mathrm{GL}(V)$. Hence we fix the following setup.

Let $\rho:G \lra \mathrm{GL}(V)$ be a faithful representation on a finite dimensional real vector space. We identify $G$ with  $\rho(G)\subset \mathrm{GL}(V)$ and we assume that $G$ is closed and it is closed under transpose. This means  there exists a scalar product $\scalo$ on $V$ such that
$
G=K\exp (\liep),
$
where $K=G\cap \mathrm{O}(V)$ and $\liep=\lieg \cap \mathrm{Sym}(V)$. Here we denote by $\mathrm{O}(V)$ the orthogonal group with respect to $\scalo$, by $\mathrm{Sym}(V)$ the set of symmetric endomorphisms of $V$ and finally with $\lieg$ the Lie algebra of $G$. Then $\lieg=\mathfrak k \oplus \liep$ is the Cartan decomposition, that is $[\mathfrak k, \mathfrak k ]\subset \mathfrak k$, $[\mathfrak k, \mathfrak p ]\subset \mathfrak p$ and $[\mathfrak p, \mathfrak p ]\subset \mathfrak k$. Moreover, $K$ is a maximal compact subgroup of $G$, the map
$
K \times \liep \mapsto G$, $(k,\xi) \mapsto k\exp(\xi),
$
is a diffeomorphism, any two maximal Abelian subalgebras of $\liep$ are conjugate by an element of $K$ and the decomposition $G=KTK$ holds, where $T=\exp(\mathfrak t)$ is the connected Abelian subgroup corresponding to a maximal Abelian subalgebra $\mathfrak t$ contained in $\liep$ \cite{helgason,knapp-beyond}. In this setting,  the function
\[
\Psi:G\times V \lra \R, \qquad (g,x) \mapsto \frac{1}{2}(\langle g x, gx \rangle - \langle  x , x \rangle ).
\]
is a Kempf-Ness function (see section \ref{sec:abstract-setting}) and the corresponding gradient map is given by
\[
\fun_\liep :V \lra \liep^*, \qquad \fun_\liep (x) (\xi)=\langle \xi x, x \rangle.
\]
If $\lia\subset \liep$ is an Abelian subalgebra, then $\Psi_{|_{A\times V}}$ is a Kempf-Ness function with respect to the $A=\exp(\lia)$ action on $V$ and the corresponding gradient map is given by $\fun_\lia (x)=\fun_\liep (x)_{|_{\lia}}$.
The Kempf-Ness Theorem provides geometric criterion for the closedness  of orbits of a representation of a real reductive Lie group and the existence of quotient \cite{berlein-jablonki,heinzner-schwarz-Cartan,kempf-ness,mundet-Crelles,rs,schwartz}. We point out that recently B\"ohm and Lafuente \cite{fb} proved the Kempf-Ness Theorem for linear actions of real reductive Lie groups avoiding any deep algebraic result.  The basic tools are the notions of stable, semistable and polystable points. Biliotti, Ghigi, Raffero and Zedda, \cite{bgs,bilio-zedda,biliotti-raffero,bilio-ghigipr}, see also \cite{mundet-Crelles,mundet-Trans,teleman-symplectic-stability}, identify an abstract setting  to develop the geometrical invariant theory for actions of real reductive Lie groups and give numerical criteria for stability, semistability and polystability. These results have been applied for actions of real reductive groups on the set of probability measure of a compact Riemmannian manifold. This problem is motivated by an application to upperbounds for the first eigenvalue of the Laplacian acting on functions \cite{biliotti-ghigi-AIF,biliotti-ghigi-American,bourguignon-li-yau,hersch}. In this paper, using the properties of the Kemf-Ness functions, we explicitly compute the image of the gradient map corresponding
to the $A$ action on $V$, respectively to the A action on $\mathbb P(V)$, along $A$ orbits (Theorems \ref{orbita} and \ref{image}) generalizing a result due to Kac and Peterson in the complex setting \cite{kacp}, see also \cite{vergne}. Roughly speaking, if $A$ acts linearly on $V$, respectively on $\mathbb P(V)$, then $\overline{A\cdot x}$ is homeomorphic to a polyhedral $C$, respectively to a polytope $P$, and any  $A$ orbit contained in $\overline{A\cdot x}$ is diffeomorphic  to the relative interior of a face of $C$, respectively of a face of $P$. As an application we obtain the Hilbert-Mumford criterion and the algebraicity of the null cone for Abelian groups (Theorems  \ref{hm} and \ref{null-cone-abelian}). We also prove a convexity Theorem of the gradient map, associated to  $A$, restricted on the closure of $G$ orbits (Theorems \ref{convex-orbit-general} and \ref{image}). Applying results proved in \cite{biliotti-ghigi-heinzner-1-preprint,biliotti-ghigi-heinzner-2,bgh-israel-p}, we completely describe the convex hull of the image of the gradient map,  with respect to $G$, restricted on the closure of $G$ orbits (Theorems \ref{convo}, \ref{convex-orbit-general-projective}).  Finally, using in a different context original ideas from \cite{georgula}, which are themselves of some interest, we give a probably new proof of the Hilbert-Mumford  criterion for real reductive groups (Theorem \ref{hmr}) avoiding any algebraic result.

This paper is organized as follows.

In the second section we recall basic notions of convex geometry. In particular we recall the definition of polyhedral, polytope, extremal points and exposed faces.
In the third section we recall the abstract setting on which we are able to develop a geometrical invariant theory for actions of real reductive Lie groups. We also recall
the notions of stable, semistable and polystable points. In the fourth section we consider a closed subgroup $G$ of $\mathrm{GL}(V)$, where $V$ is a finite dimensional real vector space endowed by a scalar product $\scalo$, which is also closed under transpose.
Given an Abelian subalgebra $\lia\subset \liep$, we explicitly compute the image of the gradient map, with respect to $A=\exp(\lia)$, along $A$ orbits and restricted on the closure of $G$ orbits.  As an application we get the Hilbert-Mumford criterion for Abelian groups and the algebraicity of the null cone. In the fifth section we completely describe the convex hull of the image of the gradient map, with respect to $G$,  restricted on the closure of $G$ orbits. We also discuss the Hilbert-Mumford criterion for real reductive groups. In the sixth section, we investigate the $G$ action on $\mathbb P(V)$.
We prove convexity Theorems for the gradient map associated to $A$ and we describe the convex hull of the image of the gradient map, with respect to $G$, restricted on the closure of $G$ orbits. 
In the last section we give a proof of the Hilbert-Mumford criterion for real reductive groups.
\section{Convex geometry}\label{convex}
It is useful to recall a few definitions and results regarding convex
sets. A good references, amongst many other, are  \cite{schneider-convex-bodies,schr} (see also  \cite{biliotti-ghigi-heinzner-1-preprint, biliotti-ghigi-heinzner-2,bgh-israel-p,villa}).

 Let $V$ be a real vector
 space with a scalar product $\scalo$ and let $E\subset V$ be a
 convex subset.  The \emph{relative interior} of
 $E$, denoted $\relint E$, is the interior of $E$ in its affine hull.
 A face $F$ of $E$ is a convex subset $F\subset E$ with the following
 property: if $x,y\in E$ and $\relint[x,y]\cap F\neq \vacuo$, then
 $[x,y]\subset F$.  The \emph{extreme points} of $E$ are the points
 $x\in E$ such that $\{x\}$ is a face. If  $E$ is closed and nonempty then the faces
 are closed \cite[p. 62]{schneider-convex-bodies}.  A face distinct
 from $E$ and $\vacuo$ will be called a \enf{proper face}.  The
 \enf{support function} of $E$ is the function $ h_E : V \ra \R$, $
 h_E(u) = \max_{x \in E} \sx x, u \xs$.  If $ u \neq 0$, the
 hyperplane $H(E, u) : = \{ x\in E : \sx x, u \xs = h_E(u)\}$ is
 called the \enf{supporting hyperplane} of $E$ for $u$. The set
   \begin{gather}
     \label{def-exposed}
     F_u (E) : = E \cap H(E,u)
   \end{gather}
   is a face and it is called the \enf{exposed face} of $E$ defined by
   $u$. 
 In general not all faces of a convex subset are exposed.
 A simple example is given by the convex hull of a closed disc and a
 point outside the disc: the resulting convex set is the union of the
 disc and a triangle. The two vertices of the triangle that lie on the
 boundary of the disc are non-exposed 0-faces. Another example is given in \cite{bgh-israel-p} $p.432$.

A subset $E\subset V$ is called a \emph{convex cone} if $E$ is convex, not empty and closed under multiplication by non negative real numbers. It is easy to check that $E$ is a convex cone if and only if $E$ is closed under addition and under multiplication by non negative real numbers. The cone generated by the vectors $f_1,\ldots,f_n \in V$ is the set $
C(f_1,\ldots,f_n):=\{\lambda_1 x_1 + \cdots +\lambda_n x_n :\, \lambda_1\geq 0,\,\ldots,\,\lambda_n \geq 0\}.
$
A cone arising in this way is called \emph{finitely generated cone}. A \emph{polytope} is the convex hull of a finite number of points of $V$. If $f_1,\ldots,f_n\in V$ then  the set
$P(f_1,\ldots,f_n)=\{\alpha_1 f_1 + \cdots +\alpha_n f_n:\, \alpha_1,\ldots,\alpha_n \geq 0$ and $\alpha_1 + \cdots + \alpha_n =1\}$ is the polytope generated by $f_1,\ldots,f_n$. The following result goes back by Farkas, Minkowsky and Weyl (see \cite{schr} $p.87$ for a proof).
\begin{teo}
A convex cone is finitely generated if and only is the intersection of finitely many closed linear half spaces.
\end{teo}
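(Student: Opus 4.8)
The plan is to reduce both implications to three easy facts about the \emph{polar cone} $C^{\circ}:=\{u\in V:\sx u,x\xs\le 0\ \text{for all}\ x\in C\}$, together with one genuine computation, namely Fourier--Motzkin elimination. The three facts are: \textbf{(i)} every closed convex cone satisfies $C^{\circ\circ}=C$ --- the inclusion $C\subseteq C^{\circ\circ}$ is trivial, and if $y\notin C$ one separates $y$ from the closed convex set $C$ by a hyperplane and, using that $C$ is a cone (so that $0\in C$ and $tx\in C$ for $t\ge 0$, $x\in C$), produces $u$ with $\sx u,y\xs>0\ge\sx u,x\xs$ for every $x\in C$, whence $u\in C^{\circ}$ and $y\notin C^{\circ\circ}$; \textbf{(ii)} a finitely generated cone $C(f_1,\dots,f_n)$ is closed --- by a Carath\'eodory-type reduction each of its points is a non-negative combination of a linearly independent subfamily of $\{f_1,\dots,f_n\}$, so $C(f_1,\dots,f_n)$ is a \emph{finite} union of cones each generated by linearly independent vectors, and such a cone is the image of the closed positive orthant under a linear map that embeds the relevant coordinate subspace into $V$, hence is closed; \textbf{(iii)} $C(f_1,\dots,f_n)^{\circ}=\{u:\sx u,f_i\xs\le 0,\ i=1,\dots,n\}$, an intersection of finitely many closed linear half spaces, which is immediate from the definition.

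The substantive step is: \emph{finitely generated implies an intersection of finitely many closed linear half spaces}. I would write $C(f_1,\dots,f_n)$ as the image of the positive orthant $\{\lambda\in\R^n:\lambda_i\ge 0\}$ under the linear map $\lambda\mapsto\sum_i\lambda_i f_i$. Its graph $\{(\lambda,x)\in\R^n\times V:x=\sum_i\lambda_i f_i,\ \lambda_i\ge 0\}$ is described by finitely many homogeneous linear inequalities (the equalities $x=\sum_i\lambda_i f_i$ being each split into two inequalities). One then eliminates the coordinates $\lambda_1,\dots,\lambda_n$ one at a time by Fourier--Motzkin elimination: removing a single variable from a finite system of homogeneous linear inequalities produces again a finite system of homogeneous linear inequalities whose solution set is exactly the projection. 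After $n$ steps one obtains $C(f_1,\dots,f_n)$ as the intersection of finitely many closed linear half spaces.

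The converse follows by a short ``polar ping-pong''. Let $C=\bigcap_{i=1}^m\{x:\sx a_i,x\xs\le 0\}$ and put $D:=C(a_1,\dots,a_m)$; by \textbf{(iii)}, $D^{\circ}=C$. By the step just proved, $D$ is an intersection of finitely many closed linear half spaces, say $D=\bigcap_{j=1}^k\{x:\sx b_j,x\xs\le 0\}$; putting $E:=C(b_1,\dots,b_k)$, \textbf{(iii)} again gives $E^{\circ}=D$. Since $E$ is finitely generated it is closed by \textbf{(ii)}, so the bipolar identity \textbf{(i)} yields $E=E^{\circ\circ}=D^{\circ}=C$; hence $C=C(b_1,\dots,b_k)$ is finitely generated.

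I expect the Fourier--Motzkin elimination to be the only real obstacle: the claim that one elimination step turns a finite homogeneous system into another finite homogeneous system describing the projection is elementary but requires careful bookkeeping (pairing the inequalities in which the eliminated variable has coefficients of opposite sign), and one must check that homogeneity is preserved so that the final description indeed uses \emph{linear} half spaces. The remaining ingredients --- the separation theorem, the Carath\'eodory reduction, and the polarity formulas --- are routine, and one could instead simply invoke the elimination lemma as in \cite{schr}.
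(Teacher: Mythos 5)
Your proof is correct. There is nothing in the paper to compare it against: the author states this as the classical Farkas--Minkowski--Weyl theorem and simply refers to \cite{schr}, p.~87, for a proof, so you have supplied an argument where the paper supplies none. What you give is essentially the standard duality proof from that reference: the Weyl direction (finitely generated $\Rightarrow$ finite intersection of closed linear half spaces) by eliminating the coefficients $\lambda_1,\dots,\lambda_n$ from the homogeneous system describing the graph of $\lambda\mapsto\sum_i\lambda_i f_i$ --- Schrijver routes this through the fundamental theorem of linear inequalities, while your Fourier--Motzkin elimination is an equally standard and fully self-contained alternative --- and the Minkowski direction by the bipolar ``ping-pong'' $E=E^{\circ\circ}=D^{\circ}=C$, which is exactly where the closedness of finitely generated cones is indispensable. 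Your three auxiliary facts are all sound; in particular the Carath\'eodory-type proof that $C(f_1,\dots,f_n)$ is closed (finite union of images of closed orthants under injective linear maps) is a cleaner alternative to the normalization-and-compactness induction the author sketches in a stray lemma in the source. The one step that genuinely needs care is the one you flag: a single Fourier--Motzkin step must be checked to produce a \emph{finite}, still \emph{homogeneous} system whose solution set is exactly the projection, by pairing the inequalities in which the eliminated variable appears with opposite signs and carrying along unchanged those in which it does not appear; with that lemma in hand the proof is complete.
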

The above theorem implies $C(f_1,\ldots,f_n)$ is closed. In the literature the intersection of finitely many closed linear half spaces is called polyhedral. Hence $C(f_1,\ldots,f_n)$ is a polyhedral. The concept of polytope and polyhedral are related and this statement is usually attribute to Minkowski (see \cite{schr} p. $89$).
\begin{teo}
A convex set is a polytope if and only if it is a bounded polyhedral.
\end{teo}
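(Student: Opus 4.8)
The plan is to deduce the statement from the previous (Farkas--Minkowsky--Weyl) theorem on finitely generated cones, via the standard homogenization trick: pass from $V$ to $V\times\R$, apply the cone result there, and come back.

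First I would show that every polytope is a bounded polyhedral set. Let $P=P(f_1,\dots,f_n)\subset V$, lift the generators to height one by setting $\tilde f_i=(f_i,1)\in V\times\R$, and form the finitely generated cone $C=C(\tilde f_1,\dots,\tilde f_n)\subset V\times\R$. By the previous theorem, $C$ is an intersection of finitely many closed linear half spaces of $V\times\R$, say $C=\bigcap_{j=1}^m\{(x,t):\sx x,a_j\xs+tb_j\le 0\}$. Since $(x,1)\in C$ if and only if $x=\sum_i\lambda_i f_i$ with $\lambda_i\ge 0$ and $\sum_i\lambda_i=1$, we get $P=\{x\in V:(x,1)\in C\}=\bigcap_{j=1}^m\{x\in V:\sx x,a_j\xs\le -b_j\}$, an intersection of finitely many closed half spaces, hence polyhedral; moreover $P$ is bounded, being contained in the ball of radius $\max_i\|f_i\|$.

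Conversely I would show that a bounded polyhedral set is a polytope. Let $Q=\bigcap_{j=1}^m\{x\in V:\sx x,u_j\xs\le c_j\}$ be bounded; if $Q=\vacuo$ there is nothing to prove, so fix $x_0\in Q$. Homogenize by putting
\[
C=\{(x,t)\in V\times\R:\ t\ge 0,\ \sx x,u_j\xs\le c_j t\ \text{ for }j=1,\dots,m\},
\]
which is an intersection of finitely many closed linear half spaces of $V\times\R$, hence a convex cone, hence finitely generated by the previous theorem: $C=C(g_1,\dots,g_k)$ with $g_l=(w_l,s_l)$, $s_l\ge 0$. If $s_l=0$ then $(x_0+\mu w_l,1)=(x_0,1)+\mu(w_l,0)\in C$ for all $\mu\ge 0$, so $x_0+\mu w_l\in Q$ for all $\mu\ge 0$, and boundedness of $Q$ forces $w_l=0$; such generators may be discarded. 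For the remaining ones $s_l>0$, and since positive rescaling does not change the generated cone we may assume $g_l=(w_l,1)$. Then $(x,1)\in C$ exactly when $(x,1)=\sum_l\lambda_l(w_l,1)$ with $\lambda_l\ge 0$, i.e.\ $\sum_l\lambda_l=1$ and $x=\sum_l\lambda_l w_l$, so $Q=P(w_1,\dots,w_k)$ is a polytope.

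The main obstacle is the converse direction, and specifically controlling the generators of the homogenized cone $C$ delivered by the previous theorem: one must rule out generators lying in the hyperplane $\{t=0\}$ other than the origin, and this is the only point at which the hypothesis that $Q$ is bounded is used. Dropping that hypothesis, the same argument shows that an arbitrary polyhedral set is the Minkowski sum of a polytope and a finitely generated cone, which explains why ``bounded'' cannot be omitted. The remaining ingredients---that finitely generated cones are closed and that slicing a polyhedral set by an affine hyperplane is again polyhedral---are routine.
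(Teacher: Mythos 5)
Your proof is correct. The paper does not prove this statement at all --- it only cites Schrijver --- and your homogenization argument (lifting to $V\times\R$, applying the Farkas--Minkowsky--Weyl theorem stated just before, and slicing at $t=1$, with boundedness used exactly to kill the generators lying in $\{t=0\}$) is precisely the standard proof from that reference, so it supplies the omitted argument by the intended route.
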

If $f_1,\ldots,f_n \in V$, we denote by $C^o(f_1,\ldots,f_n)=\{\lambda_1 x_1 + \cdots +\lambda_n x_n :\, \lambda_1> 0,\,\ldots,\,\lambda_n > 0\}.$
\begin{lemma}\label{cono-chiuso}
$C^o(f_1,\ldots,f_n)$ is closed if and only if $0\in C^o(f_1,\ldots,f_n)$.
\end{lemma}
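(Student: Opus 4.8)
The plan is to prove the two implications separately, reducing the ``if'' direction to the fact --- noted above as a consequence of the Farkas--Minkowski--Weyl theorem --- that the finitely generated cone $C(f_1,\ldots,f_n)$ is closed. (Here I read the displayed formula defining $C^o(f_1,\ldots,f_n)$ with the $x_i$'s being the $f_i$'s, i.e. $C^o(f_1,\ldots,f_n)=\{\lambda_1 f_1+\cdots+\lambda_n f_n:\lambda_1>0,\ldots,\lambda_n>0\}$.)

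For the direction ``$0\in C^o(f_1,\ldots,f_n)\ \Rightarrow\ C^o(f_1,\ldots,f_n)$ is closed'', I would in fact establish the stronger equality $C^o(f_1,\ldots,f_n)=C(f_1,\ldots,f_n)$. The inclusion $C^o(f_1,\ldots,f_n)\subseteq C(f_1,\ldots,f_n)$ is immediate from the definitions. For the reverse inclusion, fix $\lambda_1,\ldots,\lambda_n>0$ with $\sum_{i=1}^n\lambda_i f_i=0$, which exists by hypothesis. Given any $x=\sum_{i=1}^n\mu_i f_i\in C(f_1,\ldots,f_n)$ with $\mu_i\geq 0$, and any $\eps>0$, we may rewrite $x=\sum_{i=1}^n(\mu_i+\eps\lambda_i)f_i$; since every coefficient $\mu_i+\eps\lambda_i$ is strictly positive, this shows $x\in C^o(f_1,\ldots,f_n)$. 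Hence $C^o(f_1,\ldots,f_n)=C(f_1,\ldots,f_n)$, and the latter is closed.

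For the converse direction, I would argue by approximation: the point $x_k:=\tfrac1k(f_1+\cdots+f_n)$ lies in $C^o(f_1,\ldots,f_n)$ for every integer $k\geq 1$, and $x_k\to 0$ as $k\to\infty$. Therefore, if $C^o(f_1,\ldots,f_n)$ is closed, then $0$ must belong to it.

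I do not expect a real obstacle here: the argument is elementary, and the only point requiring care is the invocation of the closedness of $C(f_1,\ldots,f_n)$, which is already recorded in the text right after the Farkas--Minkowski--Weyl theorem.
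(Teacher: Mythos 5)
Your proof is correct and follows essentially the same route as the paper: the forward direction is proved by showing $C^o(f_1,\ldots,f_n)=C(f_1,\ldots,f_n)$ via adding a positive combination representing $0$, exactly as in the text. Your converse is slightly more self-contained than the paper's (which asserts that closedness of $C^o$ forces $C^o=C$, implicitly using $\overline{C^o(f_1,\ldots,f_n)}=C(f_1,\ldots,f_n)$), since you exhibit the explicit sequence $\tfrac1k(f_1+\cdots+f_n)\to 0$; both are valid elementary arguments.
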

\begin{proof}
If $0\in C^o(f_1,\ldots,f_n)$, then there exist $\alpha_1,\ldots,\alpha_n >0$ such that
\[
0=\alpha_1 f_1 + \cdots + \alpha_n f_n.
\]
Let $v\in C(f_1,\ldots,f_n)$. Since $v=v+\alpha_1 f_1 + \cdots + \alpha_n f_n$ it follows $v\in  C^o(f_1,\ldots,f_n)$ and so $C^o(f_1,\ldots,f_n)=C(f_1,\ldots,f_n)$ is closed. Vice-versa, assume $C^o (f_1,\ldots,f_n)$ is closed.  Then $C^o(f_1,\ldots,f_n)=C(f_1,\ldots,f_n)$ and so $0\in C^o(f_1,\ldots,f_n)$.
\end{proof}
Now, we investigate the boundary structure of a polyhedral by means of convex geometry.
\begin{lemma}\label{exposed}
Let $F$ be an exposed proper face of $C(f_1,\ldots,f_n)$. Then there exists $u\in V\setminus\{0\}$ such that
\[
F=\{x\in C(f_1,\ldots,f_n):\, \mathrm{max}_{y\in C(f_1,\ldots,f_n )} \langle y, u \rangle=\langle x, u \rangle=0\}.
\]
Moreover, $F$ is itself a polyhedral.
\end{lemma}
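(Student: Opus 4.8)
The plan is to exploit the description of exposed faces given by the support function together with the Farkas–Minkowski–Weyl theorem. Let $C = C(f_1,\ldots,f_n)$. Since $F$ is an exposed proper face, by definition \eqref{def-exposed} there is some $u \in V \setminus \{0\}$ with $F = F_u(C) = C \cap H(C,u)$, where $H(C,u) = \{x \in C : \langle x, u\rangle = h_C(u)\}$. First I would show that $h_C(u) = 0$. Indeed, because $C$ is a cone, if some $x \in C$ had $\langle x, u\rangle > 0$ then $\langle tx, u\rangle = t\langle x,u\rangle \to +\infty$ as $t \to +\infty$, contradicting the finiteness of $h_C(u)$; hence $\langle x, u\rangle \le 0$ for all $x \in C$, so $h_C(u) \le 0$. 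On the other hand $0 \in C$ gives $h_C(u) \ge \langle 0, u\rangle = 0$. Therefore $h_C(u) = 0$, and substituting into the formula for $F_u(C)$ yields exactly
\[
F = \{x \in C(f_1,\ldots,f_n) : \mathrm{max}_{y \in C(f_1,\ldots,f_n)} \langle y, u\rangle = \langle x, u\rangle = 0\},
\]
which is the first assertion.

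For the second assertion, that $F$ is itself a polyhedral, I would argue as follows. By the Farkas–Minkowski–Weyl theorem, $C$ is the intersection of finitely many closed linear half spaces, say $C = \bigcap_{j=1}^m \{x : \langle x, a_j\rangle \le 0\}$ (the half spaces are linear, i.e.\ homogeneous, because $C$ is a cone). The computation above shows that $F = C \cap \{x : \langle x, u\rangle = 0\}$, and the hyperplane $\{x : \langle x, u\rangle = 0\}$ is the intersection of the two closed linear half spaces $\{\langle x, u\rangle \le 0\}$ and $\{\langle x, -u\rangle \le 0\}$. Hence
\[
F = \Bigl(\bigcap_{j=1}^m \{x : \langle x, a_j\rangle \le 0\}\Bigr) \cap \{x : \langle x, u\rangle \le 0\} \cap \{x : \langle x, -u\rangle \le 0\}
\]
is an intersection of finitely many closed linear half spaces, hence polyhedral by definition (equivalently, it is a finitely generated cone by the same theorem).

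I do not expect a serious obstacle here; the statement is essentially bookkeeping once one observes the two cone-specific facts ($h_C(u) \le 0$ from homogeneity, $h_C(u) \ge 0$ from $0 \in C$). The only point requiring a little care is making sure the half spaces produced by Farkas–Minkowski–Weyl are \emph{linear} (homogeneous) half spaces rather than affine ones — this is automatic because a finitely generated cone, being invariant under positive scaling, can be cut out by homogeneous inequalities — so that adjoining $\pm u$ keeps us within the class of polyhedral cones and the conclusion "$F$ is a polyhedral" is literally the definition recalled after Theorem on Farkas–Minkowski–Weyl.
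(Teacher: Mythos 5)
Your proof is correct. The first assertion is handled exactly as in the paper: both arguments use positive homogeneity of the cone to force $h_C(u)\le 0$ and the presence of $0\in C$ to force $h_C(u)\ge 0$ (the paper phrases this as ``the maximum $c$ must be zero''). For the second assertion, however, you take a genuinely different route. The paper works with the generators: it shows that every $f_i$ satisfies $\langle f_i,u\rangle\le 0$, lets $J$ be the set of indices with $\langle f_i,u\rangle=0$, and identifies $F$ explicitly as $C(f_{i_1},\ldots,f_{i_s})$, the subcone generated by $\{f_i: i\in J\}$; this is a direct verification that $F$ is finitely generated. You instead pass to the dual description via the Farkas--Minkowski--Weyl theorem, write $F=C\cap\{x:\langle x,u\rangle=0\}$, and observe that adjoining the two homogeneous half spaces $\{\langle x,\pm u\rangle\le 0\}$ to a half-space representation of $C$ keeps $F$ in the class of polyhedral cones. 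Both arguments are valid and about equally short. What the paper's version buys is the explicit generating set of the face, which is not part of the lemma's statement but is invoked later (in the proof of Theorem \ref{orbita} one needs precisely the index set $J$ with $\alpha_i(\xi)=0$ for $i\in J$ and $\alpha_i(\xi)<0$ otherwise); your version buys a cleaner reduction to the definition of ``polyhedral'' and avoids having to argue separately that no generator can pair positively with $u$. Your closing remark about the half spaces being linear rather than affine is well taken and is indeed covered by the statement of the Farkas--Minkowski--Weyl theorem as recalled in the paper.
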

\begin{proof}
Let $u\in V$ such that $F=\{x\in C(f_1,\ldots,f_n):\, \mathrm{max}_{y\in C(f_1,\ldots,f_n )} \langle y, u \rangle=\langle x, u \rangle=c\}$.  Since $C(f_1,\ldots,f_n)$ is a cone, if $x\in C(f_1,\ldots,f_n)$, then $tx \in C(f_1,\ldots,f_n)$ for any positive $t$. Hence the maximum $c$ must be zero and  so $F$ is a closed cone. We claim that there exists, $J=\{j_1,\ldots,j_s\}\subset \{1,\ldots,n\}$ such that $\langle f_{i_j}, u \rangle =0$ per $j=1,\ldots,s$ and $\langle f_r,u \rangle <0$ for any $r \in \{1,\ldots,n\} \setminus \{j_1,\ldots,j_s\}$. Otherwise there exist $\alpha, \beta \in \{1,\ldots, n \}$ such that $\langle f_\alpha, u \rangle \langle f_\beta , u \rangle <0$ and so $\mathrm{relint} [f_\alpha,f_\beta ] \cap F \neq \emptyset$ which is a contradiction. Now, it is easy to check that
$
F=C(f_{i_1},\cdots,f_{i_s})
$
and so the result is proved.
\end{proof}
Any face of a polytope is exposed \cite{schneider-convex-bodies}. The following statement proves that any face of a polyhedral is exposed as well.
A proof is given in \cite{schr} $p.101$ (see also  Proposition $1.22$ $p.6$ in \cite{villa}).
\begin{prop}\label{face-exposed}
Let $F$ be an exposed face of $C(f_1,\ldots,f_n)$ and $F_1 \subset F$ be an exposed face of $F$. Then $F_1$ is an exposed face of $C(f_1,\ldots,f_n)$. Hence any face of a  polyhedral is exposed.
\end{prop}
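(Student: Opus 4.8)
The plan is to reduce everything to the explicit description of exposed faces of a finitely generated cone furnished by Lemma \ref{exposed}, and then to expose $F_1$ inside $C(f_1,\ldots,f_n)$ by perturbing the linear functional that exposes $F$.

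The cases $F_1\in\{\vacuo,F\}$ are trivial (any functional taking a positive value on some $f_i$ exposes $\vacuo$, while the functional exposing $F$ exposes $F$), and if $F=C(f_1,\ldots,f_n)$ there is nothing to prove, so I may assume $F$ is a proper face and $F_1$ is a proper face of $F$. By (the proof of) Lemma \ref{exposed}, after relabelling the generators, $F=C(f_1,\ldots,f_s)$ and there is $u\in V\setminus\{0\}$ with $\langle f_i,u\rangle=0$ for $i\le s$ and $\langle f_j,u\rangle<0$ for $j>s$. Since $F$ is a cone of the same type, Lemma \ref{exposed} applies again, now to the proper exposed face $F_1$ of $F=C(f_1,\ldots,f_s)$: after a further relabelling $F_1=C(f_1,\ldots,f_t)$ with $t\le s$, and there is $w\in V\setminus\{0\}$ with $\langle f_i,w\rangle=0$ for $i\le t$ and $\langle f_i,w\rangle<0$ for $t<i\le s$.

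The key step is to take $v=u+\eps w$ with $\eps>0$. For $i\le t$ we get $\langle f_i,v\rangle=0$; for $t<i\le s$ we get $\langle f_i,v\rangle=\eps\langle f_i,w\rangle<0$; and for $j>s$ we get $\langle f_j,v\rangle=\langle f_j,u\rangle+\eps\langle f_j,w\rangle<0$ as soon as $\eps$ is smaller than $-\langle f_j,u\rangle/|\langle f_j,w\rangle|$ for each of the finitely many $j>s$ with $\langle f_j,w\rangle\neq0$. Fixing such an $\eps$, for any $\sum_i\lambda_i f_i\in C(f_1,\ldots,f_n)$ with $\lambda_i\ge0$ we have $\langle\sum_i\lambda_i f_i,v\rangle=\sum_i\lambda_i\langle f_i,v\rangle\le0$, with equality precisely when $\lambda_i=0$ for every $i$ with $\langle f_i,v\rangle<0$, i.e. precisely on $C(f_1,\ldots,f_t)=F_1$. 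Since the value $0$ is attained, $\max_{y\in C(f_1,\ldots,f_n)}\langle y,v\rangle=0$ and hence $F_1=F_v\big(C(f_1,\ldots,f_n)\big)$ is an exposed face of $C(f_1,\ldots,f_n)$.

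For the final assertion it suffices to show that every face $F$ of $C(f_1,\ldots,f_n)$ is the last term of a finite chain $C(f_1,\ldots,f_n)=G_0\supseteq G_1\supseteq\cdots\supseteq G_m=F$ in which each $G_{r+1}$ is an exposed face of $G_r$, and then to iterate what has just been proved. This in turn follows from the Farkas--Minkowski--Weyl theorem: $C(f_1,\ldots,f_n)$ is an intersection of finitely many half-spaces $\{\langle x,u_k\rangle\le0\}$, and every proper face is contained in one of the exposed faces $F_{u_k}$, so one descends by induction on dimension. I expect this last reduction to be the only real obstacle: the perturbation $v=u+\eps w$ disposes of one chain step at once, but one still needs the half-space description to know that an arbitrary face is reached by such a chain (equivalently, one could let $G$ be the intersection of all exposed faces containing $F$, observe that it is again exposed, and conclude $G=F$ by induction on dimension).
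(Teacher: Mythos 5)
Your argument is correct. Note that the paper offers no proof of its own here --- it only cites Schrijver and Villa --- so there is nothing to match it against; what you have written is a legitimate self-contained argument, and it is very much in the spirit of the paper's Lemma \ref{exposed}, which you invoke twice to write both $F$ and $F_1$ as subcones $C(f_{i_1},\ldots,f_{i_s})$ together with functionals $u$, $w$ of prescribed sign on the generators. The perturbation $v=u+\eps w$ is exactly the right device: the finitely many strict inequalities $\langle f_j,u\rangle<0$ survive a small perturbation, and the sign pattern of $v$ on the generators determines $F_v(C)$ because a sum of nonpositive terms vanishes only if each term does --- the same bookkeeping the paper uses to prove Lemma \ref{exposed}. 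The cited sources argue instead from the half-space description $C=\bigcap_k\{x:\langle x,u_k\rangle\le 0\}$, where a face is cut out by turning some inequalities into equalities and $\sum_{k\in S}u_k$ exposes it; your $u+\eps w$ is the two-functional instance of that trick extracted from the generator description. The only place you genuinely need the half-space description is the final assertion, and your sketch (every proper face is contained in a proper exposed face, a face of a face is a face, induct on dimension, then collapse the chain using the transitivity just proved) is the standard route; the one line worth adding is that a relative-interior point of a proper face must saturate some defining inequality whose hyperplane does not contain all of $C$, which is what produces a \emph{proper} exposed face $F_{u_k}(C)\supseteq F$ and forces the dimension to drop so the induction terminates.
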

\begin{remark}
Let $E$ be a closed convex set of $V$. Let $F\subset E$ be face. If $F_1 \subset F$ is an exposed face of $F$, it is not true in general that $F_1$ is an exposed face of $E$.
This means that the above result holds for a polyhedral but it in not true in general.
\end{remark}
\section{Kempf-Ness functions}\label{sec:abstract-setting}
In this section we briefly recall the abstract setting introduced in \cite{bilio-zedda} (see also \cite{bgs,biliotti-raffero,bilio-ghigipr}).

Let $\spaz$ be a Hausdorff topological space and let $G$ be a connected real reductive group which acts continuously on $\spaz$. Observe that with these assumptions we can write $G=K\exp (\liep)$, where $K$ is a maximal compact subgroup of $G$. Starting with these data we  consider a
  function $ \Psi : \spaz \times G \ra \R$, subject to four conditions.
  \begin{enumerate}
  \item[($P1$)] For any $x\in \spaz$ the function $ \Psi(x,\cd )$
    is smooth on $G$.
  \item[($P2$)] The function $\Psi(x, \cd )$ is left--invariant
    with respect to $K$, i.e.: $\Psi(x,kg) = \Psi(x,g)$.
  \item[($P3$)] For any $x\in \spaz$, and any $\x \in \liep$ and
    $t\in \R${:}
    \begin{gather*}
      \frac{\mathrm{d^2}}{\mathrm{dt}^2 } \Psi(x,\exp(t\x)) \geq 0.
    \end{gather*}
    Moreover:
    \begin{gather*}
      \frac{\mathrm{d^2}}{\mathrm{dt}^2 }\bigg \vert_{t=0}
      \Psi(x,\exp(t\x)) = 0
    \end{gather*}
    if and only if $\exp(\R \x) \subset G_x$.
  \item[($P4$)] For any $x\in \spaz$, and any $g, h\in G$:
    \begin{gather*}
      \Psi(x,g) + \Psi({gx}, h) = \Psi(x,hg).
    \end{gather*}
    This equation is called the \emph{cocycle condition}.
  \end{enumerate}
For $x\in \spaz$, we define $\fun (x) \in \liep^*$ by requiring that:
\[
\fun(x)(\xi):=\frac{\mathrm{d}}{\mathrm{dt} }\bigg \vert_{t=0}
      \Psi(x,\exp(t\xi)).
\]
We call $\fun: \spaz \lra \liep^*$ the \emph{gradient map} of $(\spaz, G, K, \Psi)$. As immediate consequence of the definition of $\fun$ we have the following result.
   \begin{prop}\label{equivarianza}
  The map $\fun : \spaz \ra \liep^*$ is $K$-equivariant.
\end{prop}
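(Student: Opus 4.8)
The plan is to deduce the statement directly from the cocycle condition $(P4)$ and the $K$-invariance $(P2)$, together with the one structural fact about reductive groups that the abstract axioms do not supply: since $G=K\exp(\liep)$ with $[\liek,\liep]\subset\liep$, the operator $\Ad(k)$ preserves $\liep$ for every $k\in K$, so that the coadjoint action of $K$ on $\liep^*$ (namely $(\Ad^*(k)\alpha)(\xi)=\alpha(\Ad(k\meno)\xi)$ for $\xi\in\liep$) is well defined. The goal is then to show $\fun(k\cd x)=\Ad^*(k)\fun(x)$ for all $k\in K$ and $x\in\spaz$.

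First I would record the normalization $\Psi(x,e)=0$: setting $g=h=e$ in $(P4)$ gives $2\Psi(x,e)=\Psi(x,e)$. Combining with $(P2)$ (take $g=e$ there) yields $\Psi(x,k)=\Psi(x,e)=0$ for all $k\in K$. Next, apply $(P4)$ with first group element $k\in K$: $\Psi(x,k)+\Psi(kx,h)=\Psi(x,hk)$, hence $\Psi(kx,h)=\Psi(x,hk)$ for all $h\in G$. Now fix $\xi\in\liep$ and put $h=\exp(t\xi)$. Writing $\exp(t\xi)k=k\bigl(k\meno\exp(t\xi)k\bigr)=k\exp\bigl(t\Ad(k\meno)\xi\bigr)$ and using $(P2)$ to drop the leftmost factor $k$, we get $\Psi\bigl(x,\exp(t\xi)k\bigr)=\Psi\bigl(x,\exp(t\Ad(k\meno)\xi)\bigr)$. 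Since $\Ad(k\meno)\xi\in\liep$, the right-hand side is exactly the quantity whose $t$-derivative at $0$ defines $\fun(x)$ evaluated on $\Ad(k\meno)\xi$. Thus differentiating the identity $\Psi(kx,\exp(t\xi))=\Psi\bigl(x,\exp(t\Ad(k\meno)\xi)\bigr)$ at $t=0$ gives $\fun(kx)(\xi)=\fun(x)(\Ad(k\meno)\xi)=\bigl(\Ad^*(k)\fun(x)\bigr)(\xi)$, which is the claim.

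There is essentially no hard step here; the computation is forced once one uses the cocycle identity in the right order. The only point that must not be overlooked is the remark that $\Ad(k)\liep\subset\liep$ for $k\in K$, both to guarantee that the intermediate direction $\Ad(k\meno)\xi$ lies in $\liep$ (so that $\fun(x)$ may be evaluated on it) and to make sense of the coadjoint action on $\liep^*$; this is a consequence of the Cartan decomposition of the reductive group $G$, not of the abstract axioms $(P1)$--$(P4)$.
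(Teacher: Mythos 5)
Your proof is correct and follows essentially the same route as the paper's: apply the cocycle condition with $g=k$, use the left $K$-invariance to replace $\exp(t\xi)k$ by $k^{-1}\exp(t\xi)k=\exp(t\Ad(k^{-1})\xi)$, and differentiate at $t=0$. The only cosmetic difference is that you first establish $\Psi(x,k)=0$ to drop that term, whereas the paper simply lets it vanish upon differentiation in $t$; your explicit remark that $\Ad(k)$ preserves $\liep$ is a sensible addition but not a change of method.
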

\begin{proof}
  It is an easy application of the cocycle condition and the
  left-invariance with respect to $K$ of $\Psi(x,\cdot)$. Indeed,
  \begin{gather*}
    \begin{split}
      \fun (kx) (\xi) &=\deze \Psi(kx,\exp(t \xi)) \\
      &=\deze \Psi (x, \exp(t\xi)k)=\deze \Psi
      (x,k^{-1} \exp(t\xi) k)\\
      &=\deze \Psi\left (x,\exp(t\mathrm{Ad}(k^{-1})(\xi))\right
      )=\mathrm{Ad}^* (k) ( \fun (x))(\xi) .
    \end{split}
  \end{gather*}
\end{proof}
The following definition summarizes the above discussion.
\begin{defin}
\label{def-kn}
Let $G$ be a real reductive Lie group, $K$ a maximal compact subgroup of $G$ and $\spaz$ a
  topological space with a continuous $G$--action. A \emph{Kempf-Ness
    function} for $(\spaz, G,K)$ is a function
  \begin{gather*}
    \Psi : \spaz \times G \ra \R ,
  \end{gather*}
  that satisfies conditions ($P1$)--($P4$).
\end{defin}
 Let $(\spaz,G, K) $ be as above and let $\Psi$ be a Kempf-Ness
 function.
 \begin{defin}
   \label{stabilita}
   Let $x\in \spaz$. Then:
   \begin{enumerate}
   \item $x$ is \enf{polystable} if $G\cdot  x \cap \mume \neq \vacuo$.
   \item $x$ is \enf{stable} if it is polystable and $\lieg_x$ is
     conjugate to a subalgebra of $\liek$.
   \item $x$ is \enf{semi--stable} if
     $\overline{G \cdot x} \cap \mume \neq \vacuo$.
   \item $x$ is \enf{unstable} if it is not semi--stable.
   \end{enumerate}
 \end{defin}
\begin{remark}
   The four conditions above are $G$-invariant in the sense that if a
   point $x$ satisfies one of them, then every point in the orbit of
   $x$ satisfy the same condition. This follows directly from the definition
   for polystability, semi--stability and unstability, while for 
   stability 
    it is enough to recall that
   $\lieg_{g x} = \Ad(g) (\lieg_x)$.
 \end{remark}

The following result establishes a relation between the Kempf-Ness function and polystable points. A proof is given in \cite[p.2190]{bilio-zedda} (see also \cite{bgs,teleman-symplectic-stability,mundet-Trans}).
\begin{prop}\label{critical-point}
  Let $x\in \misu$. The following conditions are equivalent:
  \begin{enumerate}
  \item $g\in G$ is a critical point of $\Psi(x, \cd)$;
  \item $\fun(gx) =0$;
  \item $g\meno K$ is a critical point of $\psi_x$.
  \end{enumerate}
\end{prop}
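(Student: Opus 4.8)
The plan is to deduce all three equivalences from a single computation based on the cocycle condition ($P4$), together with the splitting $\lieg=\liek\oplus\liep$ and the left $K$-invariance ($P2$). The starting point is the identity: for every $g\in G$ and every $\xi\in\liep$,
\[
\frac{d}{dt}\Big|_{t=0}\Psi(x,\exp(t\xi)g)=\fun(gx)(\xi).
\]
Indeed, ($P4$) gives $\Psi(x,\exp(t\xi)g)=\Psi(x,g)+\Psi(gx,\exp(t\xi))$, and differentiating at $t=0$ the first term is constant in $t$, while the derivative of the second is $\fun(gx)(\xi)$ by the very definition of the gradient map.

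For $(1)\Leftrightarrow(2)$ I would read the left-hand side as a directional derivative of $\Psi(x,\cdot)$ at $g$. The curve $t\mapsto\exp(t\zeta)g$ has velocity $(dR_g)_e(\zeta)\in T_gG$ at $t=0$, where $R_g$ denotes right translation; since $(dR_g)_e\colon\lieg\to T_gG$ is a linear isomorphism, the vectors $(dR_g)_e(\zeta)$ with $\zeta\in\lieg=\liek\oplus\liep$ exhaust $T_gG$. On the $\liek$-directions the differential of $\Psi(x,\cdot)$ vanishes automatically, because by ($P2$) the function $t\mapsto\Psi(x,\exp(t\eta)g)$ is constant for $\eta\in\liek$; on the $\liep$-directions it equals $\fun(gx)(\xi)$ by the displayed identity. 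Hence $d\Psi(x,\cdot)_g=0$ if and only if $\fun(gx)(\xi)=0$ for all $\xi\in\liep$, i.e. if and only if $\fun(gx)=0$.

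For $(1)\Leftrightarrow(3)$ I would use that ($P2$) makes $\Psi(x,\cdot)$ invariant under left translations by $K$, so it descends through the map $\phi\colon G\to G/K$, $\phi(g)=g^{-1}K$, which is the composition of the inversion diffeomorphism of $G$ with the quotient submersion $G\to G/K$ and is therefore itself a surjective submersion; the resulting smooth function is exactly $\psi_x$, characterized by $\psi_x(g^{-1}K)=\Psi(x,g)$. Then $\Psi(x,\cdot)=\psi_x\circ\phi$ gives $d\Psi(x,\cdot)_g=d(\psi_x)_{\phi(g)}\circ d\phi_g$, and since $d\phi_g$ is onto we obtain $d\Psi(x,\cdot)_g=0$ if and only if $d(\psi_x)_{g^{-1}K}=0$. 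The argument is essentially routine; the only points that require care are using right translation (with no $\Ad(g)$-twist) to identify $T_gG$ with $\lieg$, so that the curves $\exp(t\zeta)g$ genuinely span every tangent direction at $g$, and keeping track of the inversion when passing between $K\backslash G$ and $G/K$ so that the normalization $g^{-1}K$ in statement (3) comes out correctly.
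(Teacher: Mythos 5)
Your proof is correct. The paper does not prove this proposition itself but defers to \cite{bilio-zedda}, and your argument --- using the cocycle condition ($P4$) to identify the right-invariant $\liep$-derivatives of $\Psi(x,\cdot)$ at $g$ with $\fun(gx)$, the left $K$-invariance ($P2$) to annihilate the $\liek$-directions so that $T_gG=(dR_g)_e(\liek\oplus\liep)$ is exhausted, and the submersion $g\mapsto g^{-1}K$ to transfer criticality to $\psi_x$ --- is precisely the standard proof given there.
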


\begin{prop}\label{compatible}
If $\fun (x)=0$, then the stabilizer of $x$, i.e., $G_x=\{g\in G:\, gx=x\}$  is compatible with respect to the Cartan decomposition of $G=K\exp(\liep)$. Moreover, if $G=\exp(\liep)$, with $\liep$ Abelian, then any stabilizer is compatible.
\end{prop}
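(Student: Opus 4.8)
The plan is to prove compatibility by reducing it to an orbit-wise statement: for every $g\in G_x$, writing $g=k\exp\xi$ for its Cartan decomposition relative to $G=K\exp(\liep)$ (so $k\in K$, $\xi\in\liep$), I want to show that in fact $k\in G_x$ and $\exp(\R\xi)\subset G_x$. Granting this, every $g\in G_x$ lies in $(G_x\cap K)\exp(\lieg_x\cap\liep)$, so this product is all of $G_x$; since $(k,\xi)\mapsto k\exp\xi$ restricts to a bijection of $(G_x\cap K)\times(\lieg_x\cap\liep)$ onto the closed subgroup $G_x$ (a restriction of the global diffeomorphism $K\times\liep\to G$), differentiating at the identity gives $\lieg_x=(\lieg_x\cap\liek)\oplus(\lieg_x\cap\liep)$, which is exactly what compatibility means. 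Only ($P1$)--($P4$), the left $K$-invariance of $\Psi(x,\cd)$, and the fact that $\Ad(K)$ preserves $\liep$ will be used.

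First I would record two elementary facts. Taking $g=h=e$ in the cocycle condition ($P4$) gives $\Psi(x,e)=0$. Next, assuming $\fun(x)=0$: for each $\eta\in\liep$ the smooth function $\phi_\eta(t):=\Psi(x,\exp(t\eta))$ is convex by ($P3$), and $\phi_\eta'(0)=\fun(x)(\eta)=0$ by the definition of $\fun$; hence $t=0$ is a global minimum of $\phi_\eta$, so $\Psi(x,\exp\eta)=\phi_\eta(1)\ge\phi_\eta(0)=0$. (This is just the remark that, by Proposition \ref{critical-point}, $\fun(x)=0$ means $e$ is a critical point, hence a minimum, of the convex function $\Psi(x,\cd)$.)

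Now let $g=k\exp\xi\in G_x$. By ($P2$), $\Psi(x,g)=\Psi(x,\exp\xi)\ge 0$. Applying ($P4$) with $h=g\meno$, using $gx=x$ and $\Psi(x,e)=0$, gives $\Psi(x,g)+\Psi(x,g\meno)=0$. Rewriting $g\meno=\exp(-\xi)k\meno=k\meno\exp(-\Ad(k)\xi)$ and using ($P2$) once more together with $\Ad(k)\xi\in\liep$, we get $\Psi(x,g\meno)=\Psi(x,\exp(-\Ad(k)\xi))\ge 0$. Since the two nonnegative terms sum to zero, $\Psi(x,\exp\xi)=0$. Then $\phi_\xi$ is convex and attains its global minimum value $0$ at $t=0$ and at $t=1$, hence vanishes identically on $[0,1]$; in particular $\phi_\xi''(0)=0$, and by the equality clause in ($P3$) this forces $\exp(\R\xi)\subset G_x$. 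Consequently $\exp\xi\in G_x$, so $k=g\exp(-\xi)\in G_x$, and $\xi\in\lieg_x\cap\liep$. This proves the first assertion.

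For the ``moreover'' part, suppose $G=\exp(\liep)$ with $\liep$ Abelian; then uniqueness of the Cartan decomposition forces $K=\{e\}$, so $\lieg=\liep$ and $\exp\colon\liep\to G$ is a Lie group isomorphism, and the claim is that $G_x=\exp(\lieg_x)$ for arbitrary $x$. Let $\exp\xi\in G_x$. Since $G_x$ is a subgroup, $\exp(n\xi)=(\exp\xi)^n\in G_x$ for all $n\in\Zeta$, so the cocycle argument above yields $\phi_\xi(n)+\phi_\xi(-n)=0=2\phi_\xi(0)$ for every $n$. Equality in the convexity inequality $\phi_\xi(0)\le\tfrac12(\phi_\xi(n)+\phi_\xi(-n))$ forces $\phi_\xi$ to agree with its chord on $[-n,n]$, i.e.\ to be affine there; letting $n\to\infty$, $\phi_\xi$ is affine on $\R$, so $\phi_\xi''(0)=0$ and ($P3$) again gives $\exp(\R\xi)\subset G_x$. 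Hence $G_x\subset\exp(\lieg_x)$, and the reverse inclusion is automatic, so $G_x=\exp(\lieg_x)$ is compatible. The only delicate bookkeeping is in the third paragraph: rewriting $g\meno$ as $k\meno\exp(-\Ad(k)\xi)$ so that ($P2$) applies, and then verifying that the group-level splitting obtained is a genuine Cartan decomposition of $G_x$ (the $\theta$-stability of $\lieg_x$), which is handled by differentiating the restricted diffeomorphism at the identity; everything else is a direct unwinding of the axioms.
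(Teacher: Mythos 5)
Your proof is correct, but it is organized quite differently from the paper's. For the first assertion the paper does not give an argument at all: it declares the statement well-known and refers to Biliotti--Zedda and Heinzner--Schwarz. You instead supply a complete proof from the axioms, and your mechanism is the standard one behind those references: since $\fun(x)=0$, the point $e$ is a global minimum of the convex functions $t\mapsto\Psi(x,\exp(t\eta))$ with minimum value $\Psi(x,e)=0$; the cocycle identity $\Psi(x,g)+\Psi(x,g^{-1})=0$ for $g\in G_x$, combined with $(P2)$ and the rewriting $g^{-1}=k^{-1}\exp(-\Ad(k)\xi)$, forces $\Psi(x,\exp\xi)=0$, and the equality clause of $(P3)$ then puts both Cartan factors of $g$ into $G_x$. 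This is a genuine added value over the paper, which buys self-containedness at the cost of length. For the Abelian case the two arguments are close in spirit but not identical: the paper observes that $f(t)=\fun(\exp(t\xi)x)(\xi)$ is non-decreasing and satisfies $f(0)=f(1)$ (the paper writes $f(0)=f(1)=0$, which is a slip --- these values need not vanish, only agree), hence $\frac{\mathrm{d^2}}{\mathrm{dt}^2}\Psi(x,\exp(t\xi))\equiv 0$ on $[0,1]$; you instead use $\Psi(x,\exp(n\xi))+\Psi(x,\exp(-n\xi))=0=2\Psi(x,e)$ for all $n\in\Zeta$ and the equality case of midpoint convexity to conclude that $t\mapsto\Psi(x,\exp(t\xi))$ is affine on all of $\R$. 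Both routes land on $(P3)$ in the same way; yours avoids introducing the derivative $f$ and sidesteps the paper's typo, while the paper's is marginally shorter. Your final bookkeeping step, deducing $\lieg_x=(\lieg_x\cap\liek)\oplus(\lieg_x\cap\liep)$ from the group-level factorization by differentiating curves through $e$, is sound.
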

\begin{proof}
The first part of the statement is well-known. A proof is given in    \cite{bilio-zedda}, see also \cite{heinzner-schwarz-Cartan}. The second part is also easy to check. For the sake of completeness, we give a proof.

Let
$x\in \spaz$ and let $g\in G_x$. Then $g=\exp (\x)$ for some $\x \in \liep$. Let $f(t):=\fun^{\x} (\exp (t\x)x)=\fun (\exp ( t \x ) x) (\x )$. Then
   $f(0)=f(1)=0$ and  \begin{gather*}
     \desudt f(t)=\desudt \fun^{\x} (\exp
     (t\x)x)=\frac{\mathrm{d^2}}{\mathrm{dt}^2 } \Psi(x,\exp(t\x))
     \geq 0.
   \end{gather*}
   Therefore
   $\frac{\mathrm{d^2}}{\mathrm{dt}^2 } \Psi(x,\exp(t\x))=0$ for
   $0\leq t \leq 1$.  It follows from ($P3$) that $\exp(t \x)x=x$ for any $t\in \R$ and thus $\exp(t\xi ) \in G_x$.
\end{proof}

Given $\xi \in \liep$ for any $t\in \R$ we define
$
\lambda(x,\xi,t)=\fun (\exp(t\xi )x )(\xi).
$
Applying the cocycle condition we get
\[
\fun(\exp
     (t\x)x) (\xi)=\desudt \Psi(x,\exp(t\xi)
\]
and so
\[
\desudt \fun(\exp
     (t\x)x) (\xi) =\frac{\mathrm{d^2}}{\mathrm{dt}^2 } \Psi(x,\exp(t\x))
     \geq 0.
\]
This means that
\[
\lambda (x,\xi,t)=\fun(x)(\xi)+\int_0^t \frac{\mathrm{d^2}}{\mathrm{ds}^2 } \Psi(x,\exp(s\x)) \mathrm{d}s
\]
is a non decreasing function as a function of $t$. Moreover,
\[
\Psi(x,\exp(t\xi))=\int_0^t \lambda (x,\xi,\tau) \mathrm d \tau,
\]
and so
\[
\lambda(x,\xi):=\lim_{t\mapsto +\infty} \desudt \Psi(x,\exp (t\xi)) \in \R \cup \{\infty\}.
\]
The function $\lambda(x,\cdot)$ is called maximal weight of $x$ in the direction $\xi$. For a reference see, amongst many others,  \cite{bgs,bilio-zedda,mumford-GIT,mundet-Trans, teleman-symplectic-stability}. We point out that the maximal weight is well defined for any convex function.

Let $V$ be a finite dimensional real vector space and let $f:V \lra \R$ be a convex function. For any $\xi \in V$, the function $g(t)=f(t\xi)$ is convex and so
\[
\lambda_f (\xi)=\lim_{t\mapsto +\infty} \desudt f(t \xi) \in  \R \cup \{\infty\}
\]
is well-defined. We conclude this section proving a useful lemma.
\begin{lemma}\label{exaustion}
 Let  $f:V  \lra \R$ be a convex function. Assume that for any $\xi \in V \setminus\{0\}$, we have $\lambda_f (\xi) >0$. Then $f$ is an exhaustion and so it has a critical point which is a global minimum.
 \end{lemma}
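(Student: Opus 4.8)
The plan is to show that the sublevel sets $\{f \le c\}$ are compact for every $c \in \R$; this is exactly what it means for $f$ to be an exhaustion, and since $f$ is continuous (being convex and finite on all of $V$), a continuous function with compact sublevel sets attains a global minimum, which is then a critical point by convexity. So the whole statement reduces to the compactness claim.

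To prove compactness of $\{f \le c\}$, suppose for contradiction that for some $c$ this set is unbounded, so there is a sequence $v_k$ with $f(v_k) \le c$ and $\|v_k\| \to \infty$. Passing to a subsequence, $\xi_k := v_k/\|v_k\| \to \xi$ for some unit vector $\xi \in V \setminus \{0\}$. The idea is to compare $f$ along the ray $\R_{\ge 0}\xi$ with its values at the $v_k$ using convexity, and derive a contradiction with $\lambda_f(\xi) > 0$. Concretely, fix any $t > 0$. For $k$ large, $\|v_k\| > t$, so $t\xi_k = (t/\|v_k\|) v_k + (1 - t/\|v_k\|)\cdot 0$ is a convex combination of $v_k$ and $0$, hence
\[
f(t\xi_k) \le \frac{t}{\|v_k\|} f(v_k) + \Bigl(1 - \frac{t}{\|v_k\|}\Bigr) f(0) \le \frac{t}{\|v_k\|} c + \Bigl(1 - \frac{t}{\|v_k\|}\Bigr) f(0).
\]
Letting $k \to \infty$, the right-hand side tends to $f(0)$, and by continuity of $f$ the left-hand side tends to $f(t\xi)$; hence $f(t\xi) \le f(0)$ for every $t > 0$. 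But then $\frac{\mathrm{d}}{\mathrm{dt}} f(t\xi)$, which is non-decreasing in $t$ by convexity of $g(t) = f(t\xi)$, satisfies $\frac{g(t) - g(0)}{t} \le 0$ for all $t > 0$, so its limit $\lambda_f(\xi) = \lim_{t \to +\infty} g'(t) \le 0$, contradicting the hypothesis $\lambda_f(\xi) > 0$ (note $\lambda_f(\xi)$ cannot be $+\infty$ here). This contradiction shows $\{f \le c\}$ is bounded; it is closed by continuity of $f$, hence compact.

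The remaining step is routine: since $f$ is continuous with $\{f \le c\}$ compact and nonempty for $c$ large enough (e.g. $c = f(0)$), $f$ attains its infimum at some point $x_0$. Because $f$ is convex and differentiable — wait, the lemma as stated does not assume differentiability; but a convex function on a finite-dimensional space has a subdifferential everywhere, and at a global minimum $0$ lies in the subdifferential, which is the appropriate notion of "critical point" here. (In the intended application $f = \Psi(x,\cdot)$ composed with $\exp$ is smooth, so one really does get $\mathrm{d}f(x_0) = 0$.) I expect the main obstacle to be purely bookkeeping: making sure the convex-combination estimate is set up with $0$ and $v_k$ in the right proportions so that $t\xi_k$ (not $t\xi$) appears, and then invoking continuity to pass to the limit; there is no deep difficulty, the content is the standard fact that positivity of all maximal weights forces coercivity.
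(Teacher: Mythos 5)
Your proof is correct, but it takes a genuinely different route from the paper. The paper argues directly: for each unit vector $\xi$ the hypothesis $\lambda_f(\xi)>0$ together with monotonicity of $t\mapsto \frac{\mathrm{d}}{\mathrm{dt}}f(t\xi)$ gives a radius $t(\xi)$ beyond which this derivative exceeds a positive constant, a neighborhood argument plus compactness of the unit sphere upgrades this to uniform constants $C>0$, $t_o>0$, and integrating along rays yields the coercive lower bound $f(v)\ge \min_{\|w\|=t_o}f(w)+C(\|v\|-t_o)$. You instead argue by contradiction: an unbounded sublevel set produces a limiting unit direction $\xi$, and the convex-combination estimate $f(t\xi_k)\le \frac{t}{\|v_k\|}f(v_k)+(1-\frac{t}{\|v_k\|})f(0)$ forces $f(t\xi)\le f(0)$ for all $t>0$, hence $\lambda_f(\xi)\le 0$. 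Both proofs ultimately rest on compactness of the unit sphere (yours via Bolzano--Weierstrass on $\xi_k$, the paper's via a covering argument), but yours uses only continuity of $f$ itself, which is automatic for a finite convex function, whereas the paper's neighborhood step $U_\xi$ implicitly needs continuity of the directional derivative (harmless in the intended smooth application, but an extra hypothesis in the bare convex setting); in exchange, the paper's argument is quantitative, giving an explicit linear growth rate rather than mere properness. Your remarks on the subdifferential at the minimum versus $\mathrm{d}f(x_0)=0$ in the smooth case are apt and do not affect the conclusion.
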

\begin{proof}
We may assume that $V$ is endowed by a scalar product $\scalo$. Denote by $\mathrm{S}(V)$ the unit sphere with respect to $\scalo$. Let $\xi \in \mathrm{S}(V)$. Since $\lambda_f(\xi)>0$, keeping in mind that $\frac{\mathrm{d^2}}{\mathrm{dt}^2 } f(\exp(t\x)) \geq 0$, it follows there exist $t(\xi)>0$ and $C_o>0$ such that
\[
\desudt f(\exp(t(\xi) \xi) \geq C_o >0,
\]
for any $t\geq t(\xi)$.
Hence there exists a neighborhood $U_\xi$ of $\xi$ in $\mathrm{S}(V)$ such that $\desudt f(t \nu) > \frac{C_o}{2}$ for any $t\geq t(\xi)$ and for any $\nu \in U_\xi$.  By usual compactness argument,  there exist two constants $C>0$ and $t_o >0$ such that
$
\desudt f(t \xi) \geq C,
$
for any $\xi \in \mathrm{S}(V)$ and for any $t\geq t_o$. Therefore, for any $v\in V$ such that $\parallel v \parallel \geq t_o$, we get
\[
f(v)=f(t_o \frac{v}{\parallel v \parallel}) + \int_{t_o}^{\parallel v \parallel} \desudt f \left( t \frac{v}{\parallel v \parallel}\right) \mathrm d t
\]
and so  $f(v)\geq \mathrm{min}_{ \parallel w \parallel =t_o} f(w)$. This means that  $f$ is an exhaustion and so it  has a critical point which is a global minimum.
\end{proof}
\section{Convexity Theorems for Abelian groups}
Let $G$ be a connected real reductive Lie group and let $\rho:G \lra \mathrm{GL}(V)$ be a faithful representation on a finite dimensional real vector space $V$. We identify $G$ with $\rho(G)\subset \mathrm{GL}(V)$ and we assume that $G$ is closed and it is closed under transpose. This means that there exists a scalar product $\scalo$ on $V$ such that $G=K\exp(\liep)$, where $K\subset \mathrm{O}(V)$ and $\liep \subset \lieg \cap \mathrm{Sym}(V)$. In the sequel, we denote by
$\parallel \cdot \parallel=\sqrt{\scalo}$. We define
\[
\Psi:V \times G \lra \R \qquad \Psi(x,g)=\frac{1}{2}\left(\parallel g x \parallel^2 - \parallel x \parallel^2 \right).
\]
\begin{lemma}
$\Psi:V \times G \lra \R$ is a Kempf-Ness function and the corresponding gradient map $\fun_\liep : V \lra \liep^*$ is given by
$
\fun(x)(\xi)=\langle \xi x, x \rangle.
$
\end{lemma}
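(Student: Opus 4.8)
The plan is to verify the four axioms (P1)--(P4) by direct computation and then read off $\fun$ from its definition; there is no deep content here, and the only mildly delicate point is the equality case in (P3). Conditions (P1) and (P2) are immediate: for fixed $x\in V$ the map $g\mapsto\|gx\|^2$ is a polynomial in the entries of $\rho(g)$, so $\Psi(x,\cdot)$ is smooth, which is (P1); and since $K=G\cap\mathrm{O}(V)$ preserves $\scalo$ we have $\|kgx\|=\|gx\|$ for every $k\in K$, hence $\Psi(x,kg)=\Psi(x,g)$, which is (P2). For the cocycle condition (P4) I would simply expand both sides: $\Psi(x,g)+\Psi(gx,h)=\tfrac12(\|gx\|^2-\|x\|^2)+\tfrac12(\|h(gx)\|^2-\|gx\|^2)=\tfrac12(\|(hg)x\|^2-\|x\|^2)=\Psi(x,hg)$, using that $\rho$ is a representation so that $h(gx)=(hg)x$.

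The heart of the matter is (P3). Fix $x\in V$ and $\xi\in\liep\subset\mathrm{Sym}(V)$, and set $v(t):=\exp(t\xi)x$, so that $v'(t)=\xi v(t)$. Then $\frac{\mathrm{d}}{\mathrm{d}t}\Psi(x,\exp(t\xi))=\langle\xi v(t),v(t)\rangle$, and differentiating once more, using the symmetry of $\xi$ (so that $\langle\xi^2 v,v\rangle=\langle\xi v,\xi v\rangle$), gives $\frac{\mathrm{d}^2}{\mathrm{d}t^2}\Psi(x,\exp(t\xi))=2\|\xi v(t)\|^2\ge 0$. Evaluating at $t=0$, this second derivative vanishes if and only if $\xi x=0$; and $\xi x=0$ is in turn equivalent to $\exp(t\xi)x=x$ for all $t\in\R$ — one implication by differentiating this identity at $t=0$, the other by expanding the exponential series — i.e.\ to $\exp(\R\xi)\subset G_x$. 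This is the step in which the compatibility of $G$ (the symmetry of the elements of $\liep$) is genuinely used, and it is where I expect the only real care is needed; it parallels the computation already carried out in the proof of Proposition \ref{compatible}.

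Finally, evaluating the first-derivative formula above at $t=0$ gives $\fun(x)(\xi)=\langle\xi x,x\rangle$; since the right-hand side is linear in $\xi$, this indeed defines an element $\fun(x)\in\liep^*$, which identifies the gradient map and completes the proof.
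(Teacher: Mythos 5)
Your proof is correct and follows essentially the same route as the paper: verify (P1), (P2), (P4) by direct expansion, compute the first and second derivatives of $t\mapsto\Psi(x,\exp(t\xi))$ using the symmetry of $\xi$ to establish (P3) and its equality case via $\xi x=0\Leftrightarrow\exp(\R\xi)\subset G_x$, and read off $\fun(x)(\xi)=\langle\xi x,x\rangle$ from the first derivative at $t=0$. (Incidentally, your value $2\|\xi v(t)\|^2$ for the second derivative is the correct one; the paper's displayed formula for $f''(t)$ omits the factor $2$, which is harmless for the argument.)
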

\begin{proof}
$(P1)$ and $(P2)$ are easy to check. Let $\xi \in \liep$ and let $f(t)=\Psi(x,\exp(t\xi))$. Then
\[
f'(t)=\langle \exp(t \xi ) \xi x, \exp(t\xi) x \rangle, \qquad f''(t)= \langle \exp(t \xi ) \xi x, \exp(t\xi) \xi x \rangle.
\]
Hence $f''(t) \geq 0$ and $f''(0)=0$ if and only if $\xi x=0$ and  $\exp(\R \xi ) \subset G_x$. Now,
\[
\begin{split}
\Psi(x,hg)&=\frac{1}{2} \left(\parallel hg x \parallel^2 - \parallel x \parallel^2 \right) \\
          &=\frac{1}{2} \left(\parallel hg x \parallel^2 - \parallel gx \parallel^2 \right) + \frac{1}{2} \left(\parallel g x \parallel^2 - \parallel x \parallel^2 \right) \\
          &= \Psi(gx,h)+\Psi(x,g),
\end{split}
\]
proving the cocycle condition. Finally
\[
\fun (x)(\xi)=\frac{1}{2} \desudtzero \langle \exp (t\xi) x ,\exp(t\xi) x \rangle =\langle \xi x , x \rangle,
\]
concluding the proof.
\end{proof}
Let $A=\exp(\lia)$, where $\lia\subset \liep$ is an Abelian subalgebra. It is easy to check that $\Psi_{|_{A\times V}}$ is a Kempf-Ness function with respect to $A$ and $\fun_\lia :=(\fun_\liep)_{|_{\lia}}$ is the corresponding gradient map \cite{bilio-zedda}.
Since $\lia$ is an Abelian subalgebra of symmetric endomorphisms, they are simultaneously diagonalizable. Hence there exists an orthonormal basis $\{v_1,\ldots,v_n\}$ of $V$ and functionals $\alpha_1\ldots,\alpha_n\in \lia^*$ such that
\[
\xi (v)=\sum_{i=1}^n \alpha_i (\xi )\langle v,v_i \rangle v_i.
\]
This means that if  $v=x_1 v_1 +\cdots +x_n v_n$, then
\[
\exp(\xi)(v)=e^{\alpha_1 (\xi)}x_1 v_1 + \cdots +e^{\alpha_n (\xi)} x_n v_n.
\]
In particular,
\[
\fun_\lia (v)=\sum_{i=1}^n \parallel x_i \parallel^2 \alpha_i,
\]
and so the image of $\fun_\lia$  is contained in the polyhedral $C(\alpha_1,\ldots,\alpha_n)$.

Let $x=x_1 v_1 + \cdots + x_n v_n$.  We define $\mathrm{supp}_x=\{i\in \{1,\ldots,n\}:\, x_i \neq 0\}$. If $I\subset \{1\,\ldots,n\}$, we denote by $C_I$  the polyhedral generated by $\{\alpha_i:\, i\in I\}$, i.e.,
\[
C_I=\left\{ \sum_{i\in I} s_i \alpha_i:\, i\in I\, \mathrm{and}\ s_i\geq 0 \right\}.
\]
 We also denote by $C_I^o=\left\{\sum_{i\in I} s_i \alpha_i:\, i\in I\ \mathrm{and}\ s_i > 0 \right\}$. In the theory of real reductive representations,  a fundamental problem  is to compute the image of the gradient map. The following theorem generalizes a result proved by Kac-Peterson \cite{kacp}, see also \cite{vergne}, to the real case.
\begin{teo}\label{orbita}
Let $x\in V$ and let $I=\mathrm{supp}_x$. Then the map $\fun_\lia:\overline{A\cdot x} \lra \lia^*$ satisfies:
\begin{enumerate}
\item $\fun_{\lia}: A\cdot x \lra C^o_I$ is a diffeomorphism onto. Hence $A/A_x \cong C_I^o$;
\item $\fun_\lia :\overline{A\cdot x } \lra C_I$ is an homeomorphism and for any face $\sigma \subset C_I$  there exists a unique $A$-orbit $Y$ such that $\fun_\lia^{-1}(\sigma)=\overline{Y}$. Therefore $\fun_\lia (\overline{A\cdot x})$ is a polyhedral.
\end{enumerate}
\end{teo}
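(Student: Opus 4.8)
The strategy is to exploit the explicit diagonal form of the $A$-action. Fix the orthonormal basis $\{v_1,\dots,v_n\}$ and the weights $\alpha_1,\dots,\alpha_n \in \lia^*$ as in the discussion preceding the theorem, write $x = \sum_{i} x_i v_i$ and set $I = \supp_x$. For $\xi \in \lia$ we have $\exp(\xi)x = \sum_{i\in I} e^{\alpha_i(\xi)} x_i v_i$, so $A\cdot x$ sits inside the subspace spanned by $\{v_i : i \in I\}$ and $\fun_\lia(\exp(\xi)x) = \sum_{i\in I} e^{2\alpha_i(\xi)}\|x_i\|^2 \alpha_i$. The first step is to reduce to the case where the $\alpha_i$, $i\in I$, span $\lia^*$: replacing $\lia$ by $\lia/\bigcap_{i\in I}\ker\alpha_i$ changes $A\cdot x$ only by the (compact-in-the-relevant-directions) kernel torus that fixes $x$, and the map $\fun_\lia$ factors through the quotient; this does not affect either statement. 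After this reduction $A_x = \bigcap_{i\in I}\ker(\exp|_\lia) \cap \{\xi : \alpha_i(\xi)=0\ \forall i\in I\}$ is discrete, in fact trivial since $\exp$ is injective on $\liep$, so $A/A_x = A \cong \lia$ via $\exp$.

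For part (a): consider the map $\Phi : \lia \to \lia^*$, $\Phi(\xi) = \sum_{i\in I} e^{2\alpha_i(\xi)}\|x_i\|^2\alpha_i$, which is the composition of $\exp$ with $\fun_\lia$ restricted to $A\cdot x$. Its differential at $\xi$ sends $\eta \mapsto 2\sum_{i\in I} e^{2\alpha_i(\xi)}\|x_i\|^2\alpha_i(\eta)\alpha_i$, and for $0\neq\eta\in\lia$ pairing with $\eta$ gives $2\sum_{i\in I} e^{2\alpha_i(\xi)}\|x_i\|^2\alpha_i(\eta)^2 > 0$ because the $\alpha_i$, $i\in I$, span $\lia^*$ and so not all $\alpha_i(\eta)$ vanish. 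Hence $d\Phi_\xi$ is symmetric positive definite everywhere, so $\Phi$ is an immersion; moreover $\Phi = \grad F$ where $F(\xi) = \frac12\sum_{i\in I} e^{2\alpha_i(\xi)}\|x_i\|^2$ is strictly convex. A strictly convex smooth function on $\lia$ whose gradient never vanishes (which holds here, since $\Phi(\xi)$ is a positive combination of the $\alpha_i$ and hence nonzero) has gradient map a diffeomorphism onto the open convex set $\relint C_I$; identifying $\relint C_I$ with $C_I^o$ when the $\alpha_i$ span gives $\fun_\lia : A\cdot x \to C_I^o$ a diffeomorphism onto.

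For part (b): first show $\fun_\lia$ is proper on $\overline{A\cdot x}$ and that its image is exactly $C_I$. Properness plus continuity and the description of $\overline{A\cdot x}$ — whose points are limits $\lim_k \sum_{i\in I} e^{\alpha_i(\xi_k)}(x_i)_i v_i$, i.e. vectors of the form $\sum_{i\in J} y_i v_i$ with $J\subseteq I$ obtained by sending some exponentials to $0$ — will let me identify $\overline{A\cdot x} = \bigsqcup_{J} A\cdot x_J$ where $x_J = \sum_{i\in J} x_i v_i$ ranges over those $J\subseteq I$ that actually arise as supports of limit points, and these are precisely the $J$ such that $C_J$ is a face of $C_I$ (the face $C_J = F_u(C_I)$ corresponds to a $u$ with $\alpha_i(u)=0$ for $i\in J$ and $\alpha_i(u)<0$ for $i\in I\setminus J$, and running $\exp(tu)x$ as $t\to+\infty$ produces exactly $x_J$). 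By part (a) applied to each $x_J$, $\fun_\lia$ carries $A\cdot x_J$ diffeomorphically onto $C_J^o = \relint C_J$, and since $C_I = \bigsqcup_\sigma \relint\sigma$ over faces $\sigma$ (the $\relint$'s of faces partition a polyhedral cone), the pieces match up: $\fun_\lia^{-1}(\relint\sigma) = A\cdot x_{J(\sigma)}$ and $\fun_\lia^{-1}(\sigma) = \overline{A\cdot x_{J(\sigma)}}$. Bijectivity of the continuous proper map $\fun_\lia:\overline{A\cdot x}\to C_I$ then forces it to be a homeomorphism, and $\fun_\lia(\overline{A\cdot x}) = C_I$ is a polyhedral by Lemma \ref{exposed}/the Farkas–Minkowski–Weyl theorem.

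The main obstacle is the bookkeeping in part (b): one must verify that the support sets $J$ occurring in the closure $\overline{A\cdot x}$ coincide exactly with the face-sets $\{J : C_J \text{ is a face of } C_I\}$, and that the decompositions of $\overline{A\cdot x}$ into orbits and of $C_I$ into relative interiors of faces are compatible under $\fun_\lia$. Showing $J$ is a face-set when it arises as a limit support is the direction requiring care — one picks the direction $\xi_k$ degenerating the unwanted coordinates and must extract from it a single linear functional $u$ exposing $C_J$; for the converse one uses the exposing $u$ directly. Controlling uniqueness of the orbit $Y$ with $\fun_\lia^{-1}(\sigma) = \overline Y$ then follows because distinct $J$ give distinct relative-interior strata $\relint C_J$, which are disjoint.
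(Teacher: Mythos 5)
Your overall architecture (diagonalize, reduce modulo $\lia_x=\bigcap_{i\in I}\ker\alpha_i$, realize $\fun_\lia\circ\exp$ as the gradient of $F(\xi)=\tfrac12\sum_{i\in I}e^{2\alpha_i(\xi)}\|x_i\|^2$, then stratify the closure by supports) is sound, but the key step of part (a) — surjectivity onto $C_I^o$ — rests on a claim that is false. First, the parenthetical "the gradient never vanishes, since $\Phi(\xi)$ is a positive combination of the $\alpha_i$ and hence nonzero" is wrong: a strictly positive combination of the $\alpha_i$ can be zero precisely when $0\in C_I^o$ (e.g.\ $\alpha_2=-\alpha_1$, $x_1,x_2\neq 0$), and by Lemma \ref{cono-chiuso} this is exactly the case of a closed orbit, so you cannot exclude it. Second, and independently, the general principle "a smooth strictly convex function whose gradient never vanishes has gradient map a diffeomorphism onto $\relint C_I$" is false: positive-definiteness of the Hessian makes $\nabla F$ an injective open map, but its image can be a proper open convex subset of the expected target ($F(\xi)=\sqrt{1+\xi^2}$ on $\R$ has gradient image $(-1,1)$). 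Pinning the image down as all of $C_I^o$ is the actual content of (a), and it requires showing that for each $c=\sum_{i\in I}c_i\alpha_i$ with all $c_i>0$ the function $\xi\mapsto F(\xi)-c(\xi)$ is an exhaustion, hence attains a minimum; this is what the paper does by computing the maximal weights $\lambda_f(\xi)>0$ (using crucially the strict positivity of the $c_i$) and invoking Lemma \ref{exaustion}. That estimate is absent from your plan. (One can package it as Legendre duality, $\mathrm{im}\,\nabla F=\mathrm{int}(\mathrm{dom}\,F^*)$, but then computing $\mathrm{dom}\,F^*$ is the same estimate.)

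Part (b) also takes a genuinely different route from the paper, and the step you yourself flag as delicate is the one that is not actually carried out: to show that every support $J$ arising from a limit point is a face-set, you must produce a single $u$ with $\alpha_i(u)=0$ on $J$ and $\alpha_i(u)<0$ on $I\setminus J$; from a degenerating sequence $\xi_k$ the normalized limit $u_0=\lim\xi_k/\|\xi_k\|$ only yields $\alpha_i(u_0)\le 0$ off $J$, so one must iterate inside the subface $C_{J'}$, $J'=\{i:\alpha_i(u_0)=0\}$. Likewise, properness of $\fun_\lia$ on $\overline{A\cdot x}$ is asserted without proof, and it is not obvious when $0$ is a nonnegative combination of some of the $\alpha_i$ (the coefficients $\|v_i\|^2$ are then not controlled by $\fun_\lia(v)$ alone). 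The paper sidesteps both difficulties by arguing in the opposite direction: for each face $\sigma=F_\xi(C_I)$ it identifies $\fun_\lia^{-1}(\sigma)=\overline{A\cdot\theta}$ with $\theta=\lim_{t\to+\infty}\exp(t\xi)x$, using that $\exp(t\xi)$ commutes with $A$ and that this limit is the orthogonal projection onto $\ker\xi$; since the relative interiors of the faces partition $C_I$, the orbit decomposition and the homeomorphism follow without ever classifying limit supports or proving properness separately. Your plan is repairable along either line, but as written both the surjectivity in (a) and the face/support correspondence in (b) are gaps.
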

\begin{proof}
By  Proposition $2.1$ in \cite{bilio-ghigipr}, the map  $\fun_\lia :A\cdot x \lra \lia^*$ is a diffeomorphism and its image is a convex open subset of $\mu(x)+\lia_x^o$, where $\lia_x^o=\{\phi\in \lia^*:\, \phi_{|_{\lia_x}}=0\}$. It is easy to check that $\fun_\lia (A\cdot x)$ is invariant under multiplication of non negative real numbers and so it is an open convex cone contained in $C^o_I$. Now, we shall prove that $\fun_\lia (A\cdot x)=C_I^o$.

Let $\mathfrak b\subset \lia$ such that $\lia=\lia_x \oplus \mathfrak b$. Let $c\in C^o_I$. Then $c=\sum_{i\in I} c_i \alpha_i$ with $c_i>0$. We define
\[
f:\mathfrak b \lra \R \qquad f(\xi)=\Psi(x,\exp(\xi))-  c (\xi).
\]
The equation $\fun_\lia  (\exp(\xi_o)x)=c$ means the existence of a critical point of $f$.
We prove that $f$ is strictly  convex  and an exhaustion.

Fix $v ,w\in \mathfrak b$ with
  $w \neq 0$ and consider the curve $\gamma(t)=v+tw$.  Set
  $u(t)=\fun_\lia (\gamma(t))$. It is easy to check that  $u''(t)=\frac{\mathrm{d^2}}{\mathrm{dt}^2 }\bigg \vert_{t=0}
      \Psi(\exp(v)x,\exp(t w)) >   0$ since $w \notin \lia_x$.
This proves  $f$ is a
strictly convex function on $\mathfrak b$.

Let $\xi \in \mathfrak b \setminus\{0\}$. Then
\[
\frac{d}{dt} f(t \xi)=\sum_{i\in I} \langle( e^{t \alpha_i (\xi)} \parallel x_i \parallel^2 -c_i \rangle)  \alpha_i (\xi) .
\]
Since $\xi \notin \mathfrak{a}_x$, it follows $ \alpha_i ( \xi ) \neq 0$ for some $i\in I$. If $\alpha_i (\xi) >0$, then
\[
\lim_{t\mapsto +\infty} (e^{t \alpha_i (\xi)} \parallel x_i \parallel^2 -c_i ) \alpha_i ( \xi)  =+\infty.
\]
If $\alpha_i ( \xi ) <0$, then $\lim_{t\mapsto +\infty} e^{t \alpha_i (\xi)} \parallel x_i \parallel^2 =0$ and so, keeping in mind that $c_i>0$, we get
\[
\lim_{t\mapsto +\infty} (e^{t \alpha_i (\xi)}\parallel x_i \parallel^2 -c_i ) \alpha_i ( \xi )=-c_i \alpha_i (\xi )>0.
\]
Hence $\lambda_f (\xi)>0$ for every $\xi \in \mathfrak b\setminus\{0\}$. By Lemma \ref{exaustion} the function $f$ is an exhaustion and so  it has a critical point concluding the proof of item $(a)$.

Let $\sigma$ be a face of $C_I$. By Proposition \ref{face-exposed}, there exists $\xi \in \lia$ such that $\sigma=F_\xi (C_I )$. By Lemma \ref{exposed}  there exists $J\subset I$ such that $\alpha_i (\xi) =0$ for $i\in J$ and $\alpha_i (\xi)<0$ otherwise. In particular
\[
\lim_{t\mapsto +\infty} \exp (t\xi) x=\sum_{i\in J} x_i v_i=\theta
\]
and $\fun (\theta)\in \sigma$. We prove
\[
\fun_\lia^{-1}(\sigma)=\{v\in \overline{A\cdot x}:\, \mathrm{max}_{z\in \overline{A\cdot x} } \fun_\lia (v)(\xi) =\fun_\lia (z)(\xi)=0\}=\overline{A\cdot \theta}
\]
Let
$u\in \fun_\lia^{-1}(\sigma)$. Write $s(t)=\fun_\lia (\exp(t\xi )u)(\xi)$. The function $s$ has a maximum in $t=0$ and so
\[
\dot{s}(0)=2 \langle \xi u, \xi u \rangle=0.
\]
This implies $\fun_\lia^{-1}(\sigma)\subset \mathrm{Ker}\, \xi$. Since $\xi$ commutes with $A$ it follows  $\mathrm{Ker}\, \xi$ is $A$-invariant. Moreover, the gradient flow of the function $\fun_\lia^{\xi}(x):=\fun_\lia(x)(\xi)$ is given by $\exp(t\xi)$. Therefore $\mathrm{Crit}\, \fun_\lia^\xi=\mathrm{Ker}\, \xi$ and so $\fun_\lia^{-1}(\sigma)$ is $A$-invariant as well. This implies
$
\overline{A\cdot \theta} \subset \fun_\lia^{-1}(\sigma).
$

Let $z\in \fun_\lia^{-1}(\sigma)$. Since $z\in \overline{A\cdot x}$, there exists $\{a_n\}_{n\in \mathbb N} \in A$ such that $a_n x \mapsto z$. The flow $\exp(t\xi)$ commutes with $A$ and so for any $n \in \mathbb N$, we have
\[
\lim_{t\mapsto +\infty} \exp(t\xi) (a_n x)= a_n \theta \in \mathrm{Ker}\, \xi.
\]
This means $\lim_{t\mapsto +\infty} \exp(t\xi) a_n x=P(a_n x)=a_n P(x)=a_n \theta$, where $P:V \lra \mathrm{Ker}\, \xi$ is the orthogonal projection  on $\mathrm{Ker}\, \xi$. Since $a_n x \mapsto z$ and $z\in \mathrm{Ker}\, \xi$, it follows  $P(a_n x )=a_n \theta \mapsto z$. This implies $z\in \overline{A\cdot \theta}$ and so $\fun_\lia^{-1}(\sigma)=\overline{A\cdot \theta}$.  Now, applying again item $(a)$, we get $\fun_\lia:A\cdot \theta \lra C_{I'}$,  is a diffeomorphism, where $I'=\mathrm{supp}_{\theta}$. In particular $\theta \in \mathrm{relint} \sigma$. Summing up we have proved that the map $\fun_\lia :\overline{A\cdot x} \lra C_I$ is an  homeomorphism.
\end{proof}
\begin{cor}\label{cor}
Let $x\in V$ and let $I=\mathrm{supp}_x$. Let $F$ be a face of $C_I$ and let $J\subset I$ be the subset of $I$ associated to $F$ as in the proof of Theorem \ref{orbita}. Let $v_F =\sum_{i\in J} x_i v_i$. Then
\[
\fun_\lia : A \cdot v_F \lra \mathrm{relint}\, F,
\]
is a diffeomorphism onto. Moreover, $\overline{A\cdot x}$ is the disjoint  union  of orbits $A\cdot v_F$, as  $F$ runs over the set of faces of the polyhedral $C_I$.
\end{cor}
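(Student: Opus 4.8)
The plan is to derive the corollary directly from Theorem \ref{orbita}, Lemma \ref{exposed}, and one elementary fact of convex geometry: for any finite family $g_1,\dots,g_m$ in a real vector space, the relative interior of the finitely generated cone $C(g_1,\dots,g_m)$ coincides with $C^o(g_1,\dots,g_m)=\{\sum_j\lambda_j g_j:\lambda_j>0\}$. One inclusion is immediate, since $C^o(g_1,\dots,g_m)$ is open in the linear span of the $g_j$, which is the affine hull of $C(g_1,\dots,g_m)$; for the converse one writes a point $p\in\relint C(g_1,\dots,g_m)$ as $p=\sum_j\mu_j g_j$ with $\mu_j\ge 0$, uses the relative-interior property to obtain $(1+\eps)p-\eps\sum_j g_j\in C(g_1,\dots,g_m)$ for some small $\eps>0$, and rearranges this to express $p$ as a combination of the $g_j$ with strictly positive coefficients. (Alternatively this may be quoted from \cite{schneider-convex-bodies,schr}.)

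To prove the first assertion, fix a face $F$ of $C_I$ and let $\xi\in\lia$ and $J\subset I$ be associated to $F$ as in the proof of Theorem \ref{orbita}; thus $\alpha_i(\xi)=0$ for $i\in J$, $\alpha_i(\xi)<0$ for $i\in I\setminus J$, and by Lemma \ref{exposed} we have $F=C(\alpha_i:i\in J)=C_J$. Since $J\subset I=\mathrm{supp}_x$, the vector $v_F=\sum_{i\in J}x_i v_i$ satisfies $\mathrm{supp}_{v_F}=J$, and it is precisely the element called $\theta$ in the proof of Theorem \ref{orbita}. Applying item (a) of Theorem \ref{orbita} with $v_F$ in place of $x$ shows that $\fun_\lia\colon A\cdot v_F\lra C_J^o$ is a diffeomorphism onto, and the convex-geometry fact above identifies $C_J^o=\relint C_J=\relint F$. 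This proves the first statement.

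For the decomposition of $\overline{A\cdot x}$, recall that a polyhedral is the disjoint union of the relative interiors of its faces, so every point of $C_I$ lies in $\relint F$ for exactly one face $F$ of $C_I$. By item (b) of Theorem \ref{orbita}, $\fun_\lia\colon\overline{A\cdot x}\lra C_I$ is a homeomorphism, and the proof of that theorem shows $\fun_\lia^{-1}(F)=\overline{A\cdot v_F}$ for each face $F$. Since $\fun_\lia$ is injective on $\overline{A\cdot x}$ and, by the first part, maps $A\cdot v_F$ bijectively onto $\relint F$, it follows that $\fun_\lia^{-1}(\relint F)=A\cdot v_F$. Pulling the face decomposition of $C_I$ back through the homeomorphism $\fun_\lia$ therefore exhibits $\overline{A\cdot x}$ as the disjoint union of the orbits $A\cdot v_F$ with $F$ ranging over the faces of $C_I$; distinct faces give distinct orbits, because $\fun_\lia$ is injective on $\overline{A\cdot x}$ and sends $A\cdot v_F$ into $\relint F$, and the relative interiors of distinct faces are disjoint.

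The only step requiring real care is the identification $C_J^o=\relint F$, i.e.\ matching the open cone produced by Theorem \ref{orbita}(a) with the relative interior of the exposed face produced by Lemma \ref{exposed}; once this is in place, the corollary is pure bookkeeping built on Theorem \ref{orbita}.
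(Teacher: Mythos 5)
Your proof is correct and follows essentially the same route as the paper: both derive the corollary entirely from the proof of Theorem \ref{orbita} (identifying $v_F$ with the point $\theta$ there, applying item (a) to $\theta$, and using $\fun_\lia^{-1}(F)=\overline{A\cdot v_F}$ together with the homeomorphism of item (b)). The only difference is that you make explicit the convex-geometry identification $C_J^o=\relint C_J=\relint F$ and the fact that a polyhedral is the disjoint union of the relative interiors of its faces, both of which the paper uses implicitly; this is a welcome clarification but not a different argument.
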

\begin{proof}
Let $v_F$ the element associated to $F$. In Theorem \ref{orbita} we have proved that $\fun_\lia (v_F) \in \mathrm{relint} F$, $\fun_\lia : A \cdot v_F \lra \mathrm{relint} F$ is a diffemorphism and $\fun_\lia^{-1}(F)=\overline{A\cdot v_F}$. Therefore  $\overline{A\cdot x}$ is the disjoint  union  of orbits $A\cdot v_F$, as  $F$ runs over the set of faces of the polyhedral $C_I$.
\end{proof}
\begin{teo}[Hilbert-Mumford criterion]\label{hm}
Let $u\in \overline{A\cdot x}$. Then there exist $\xi \in \lia$ and $a\in A$ such that
\[
\lim_{t\mapsto +\infty} \exp(t\xi ) a x=u.
\]
\end{teo}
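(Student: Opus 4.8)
The plan is to read the Hilbert-Mumford criterion for $A=\exp(\lia)$ directly off the structure theory established in Theorem \ref{orbita} and Corollary \ref{cor}. Given $u\in\overline{A\cdot x}$ with $I=\supp_x$, Theorem \ref{orbita}(b) tells us that $\fun_\lia(u)$ lies in a unique face $\sigma$ of the polyhedral $C_I$, and that $\fun_\lia^{-1}(\sigma)=\overline{A\cdot v_\sigma}$ where $v_\sigma=\sum_{i\in J}x_i v_i$ is the element associated to $\sigma$ by the proof of Theorem \ref{orbita}; in particular $u\in\overline{A\cdot v_\sigma}$ and $\fun_\lia(u)\in\relint\sigma$. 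So I would first reduce to producing a one-parameter subgroup degeneration of $x$ to $v_\sigma$, and then separately move within the orbit $A\cdot v_\sigma$ by an element of $A$.

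For the first part: by Proposition \ref{face-exposed} the proper face $\sigma$ is exposed, so $\sigma=F_\xi(C_I)$ for some $\xi\in\lia$, and by Lemma \ref{exposed} (applied exactly as in the proof of Theorem \ref{orbita}) we may choose $J\subset I$ with $\alpha_i(\xi)=0$ for $i\in J$ and $\alpha_i(\xi)<0$ for $i\in I\setminus J$. Since $\exp(t\xi)x=\sum_{i\in I}e^{t\alpha_i(\xi)}x_i v_i$, letting $t\to+\infty$ kills precisely the coordinates outside $J$ and gives $\lim_{t\to+\infty}\exp(t\xi)x=\sum_{i\in J}x_i v_i=v_\sigma$. (If $\sigma=C_I$ itself, i.e.\ $u\in A\cdot x$, take $\xi=0$ and $v_\sigma=x$.)

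For the second part: $u$ and $v_\sigma$ lie in the same $A$-orbit. Indeed $u\in\fun_\lia^{-1}(\sigma)=\overline{A\cdot v_\sigma}$ and $\fun_\lia(u)\in\relint\sigma$, while by Corollary \ref{cor} the map $\fun_\lia\colon A\cdot v_\sigma\to\relint\sigma$ is a diffeomorphism onto $\relint\sigma$; since $\overline{A\cdot v_\sigma}$ is the disjoint union of the orbits $A\cdot v_F$ over faces $F$ of $C_{\supp_{v_\sigma}}$ and $\fun_\lia$ is injective on each, the only orbit whose image meets $\relint\sigma$ is $A\cdot v_\sigma$ itself, so $u\in A\cdot v_\sigma$. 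Hence there is $a\in A$ with $a v_\sigma=u$. Combining, $a(\exp(t\xi)x)=\exp(t\xi)(ax)$ since $A$ is Abelian (so $\xi$ and $a$ commute), and therefore $\lim_{t\to+\infty}\exp(t\xi)(ax)=a v_\sigma=u$, which is the desired statement with the roles of $a$ and $\xi$ as in the theorem.

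The only delicate point is the bookkeeping that identifies the unique orbit $A\cdot v_\sigma$ with $\fun_\lia(u)$ in its relative interior — but this is exactly what Theorem \ref{orbita}(b) and Corollary \ref{cor} already encode, so there is essentially no new obstacle; the proof is an assembly of the facts proved above, the one thing to state carefully being that $A$-commutativity lets us interchange $a$ and $\exp(t\xi)$ so that the limit of $\exp(t\xi)(ax)$ rather than $a(\exp(t\xi)x)$ equals $u$.
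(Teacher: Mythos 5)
Your proposal is correct and follows essentially the same route as the paper: locate the unique face $\sigma$ of $C_I$ with $\fun_\lia(u)\in\relint\sigma$, use Corollary \ref{cor} to write $u=av_\sigma$, expose $\sigma$ by some $\xi\in\lia$ so that $\exp(t\xi)x\to v_\sigma$, and commute $a$ past $\exp(t\xi)$. You merely spell out the commutativity step and the uniqueness of the orbit meeting $\relint\sigma$, which the paper leaves implicit.
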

\begin{proof}
By Corollary \ref{cor} there exists a unique face $F$ such that $\fun (u)\in \mathrm{relint}\,  F$. Therefore $u=a v_F$. Taking $\xi\in \lia$ such that $F=F_\xi (C_I)$ it follows
\[
\lim_{t\mapsto +\infty} \exp(t\xi) ax=av_F=u.
\]
\end{proof}
\begin{cor}
$A\cdot x$ is closed if and only if $0\in C^o_I$. Therefore $A\cdot x$ is closed if and only if $A\cdot x \cap \fun_\lia^{-1} (0) \neq \emptyset$.
\end{cor}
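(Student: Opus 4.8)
The plan is to deduce the corollary directly from Theorem~\ref{orbita} and Lemma~\ref{cono-chiuso}, so the argument is short. First I would note that $A\cdot x$ is closed in $V$ if and only if $A\cdot x=\overline{A\cdot x}$. By part~(b) of Theorem~\ref{orbita} the restriction of $\fun_\lia$ to $\overline{A\cdot x}$ is a homeomorphism onto $C_I$, and by part~(a) this homeomorphism carries $A\cdot x$ onto $C_I^o$. Since this homeomorphism is in particular a bijection, $A\cdot x=\overline{A\cdot x}$ holds if and only if $C_I^o=C_I$.

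Next I would compare $C_I^o=C_I$ with the condition $0\in C_I^o$. The polyhedral $C_I$ is closed (Section~\ref{convex}) and contains $C_I^o$; moreover any $\sum_{i\in I}s_i\alpha_i$ with all $s_i\geq 0$ is the limit as $\eps\to 0^+$ of $\sum_{i\in I}(s_i+\eps)\alpha_i$, which lies in $C_I^o$ for $\eps>0$, so $\overline{C_I^o}=C_I$. Hence $C_I^o=C_I$ if and only if $C_I^o$ is closed, which by Lemma~\ref{cono-chiuso} happens if and only if $0\in C_I^o$; together with the previous paragraph, $A\cdot x$ is closed if and only if $0\in C_I^o$.

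Finally, for the last assertion I would observe that $A\cdot x\cap\fun_\lia^{-1}(0)\neq\vacuo$ exactly when $0\in\fun_\lia(A\cdot x)$, and $\fun_\lia(A\cdot x)=C_I^o$ by part~(a) of Theorem~\ref{orbita}; therefore $A\cdot x\cap\fun_\lia^{-1}(0)\neq\vacuo$ if and only if $0\in C_I^o$, i.e. if and only if $A\cdot x$ is closed. I do not expect any genuine obstacle here: all the content is already packaged in Theorem~\ref{orbita} and Lemma~\ref{cono-chiuso}, and the only points demanding a little care are keeping track of the ambient spaces in which closures and preimages are taken, and using that $C_I$ is closed so that ``$C_I^o=C_I$'' and ``$C_I^o$ is closed'' are the same condition.
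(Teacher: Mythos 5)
Your proposal is correct and follows exactly the paper's argument: identify $A\cdot x$ with $C_I^o$ and $\overline{A\cdot x}$ with $C_I$ via Theorem \ref{orbita}, reduce closedness of the orbit to closedness of $C_I^o$, and apply Lemma \ref{cono-chiuso}. The extra care you take in checking $\overline{C_I^o}=C_I$ is a harmless elaboration of what the paper leaves implicit.
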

\begin{proof}
By Theorem \ref{orbita}, $A\cdot x$ is closed if and only if $C_I^o$ is closed. By Lemma \ref{cono-chiuso} we get $A\cdot x$ is closed if and only if $0\in C_I^o$ and so if and only if $A\cdot x \cap \fun_\lia^{-1} (0) \neq \emptyset$.
\end{proof}
\begin{teo}\label{null-cone-abelian}
The set $\{x\in V:\, 0 \in \overline{A\cdot x}\}$ is a real algebraic subset of $V$ and so it is closed.
\end{teo}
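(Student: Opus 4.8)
The plan is to characterize the null cone $\mathcal{N}_A := \{x \in V : 0 \in \overline{A\cdot x}\}$ by a polynomial condition obtained from the weight decomposition. Recall from Theorem~\ref{orbita} that, writing $x = \sum_{i} x_i v_i$ and $I = \mathrm{supp}_x$, the closure $\overline{A\cdot x}$ maps homeomorphically onto $C_I$, and $0 \in \overline{A\cdot x}$ if and only if $0$ lies in the image, i.e. if and only if $0 \in C_I = C(\alpha_i : i\in I)$. So the first step is to observe that membership in the null cone depends only on the support:
\[
x \in \mathcal{N}_A \iff 0 \in C(\alpha_i :\, i \in \mathrm{supp}_x).
\]
Equivalently, by Lemma~\ref{cono-chiuso} applied with the generators indexed by $I$, one could phrase it via $0 \in C^o_I$ after discarding the weights that vanish identically, but the cleanest formulation is $0\in C_I$.

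Next I would turn this combinatorial condition into an algebraic one. For each subset $J \subset \{1,\ldots,n\}$ let $V_J := \{x\in V : \mathrm{supp}_x \subseteq J\} = \spam\{v_j : j\in J\}$, a linear subspace, and let $V_J^\circ := \{x : \mathrm{supp}_x = J\}$, which is $V_J$ minus the union of the coordinate hyperplanes $\{x_j = 0\}$, $j\in J$. The family $\{V_J^\circ\}_J$ partitions $V$. Call $J$ \emph{null} if $0\in C(\alpha_j : j\in J)$. A key elementary observation is monotonicity: if $J$ is null and $J\subset J'$, then $J'$ is null (adding generators to a cone can only enlarge it). Consequently
\[
\mathcal{N}_A \;=\; \bigcup_{J \text{ null}} V_J^\circ \;=\; \bigcup_{J \text{ minimal null}} V_J,
\]
where the second equality uses monotonicity: every $x\in\mathcal{N}_A$ has null support, hence contains some minimal null $J$, hence $x\in V_J$; conversely if $J$ is null then $V_J\subset\mathcal{N}_A$ since every $x\in V_J$ has $\mathrm{supp}_x\subseteq J$... wait, that inclusion needs $\mathrm{supp}_x\subseteq J$ with $J$ null to force $x\in\mathcal{N}_A$, which is \emph{not} automatic since a subset of a null set need not be null. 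The correct statement is: $x\in V_J$ with $J$ null means $\mathrm{supp}_x\subseteq J$; this does not directly give $x\in\mathcal N_A$. So instead I argue directly: $\mathcal N_A=\bigcup\{V_J^\circ : J\text{ null}\}$, and then rewrite each stratum. Since $V_J^\circ = V_J \setminus \bigcup_{j\in J}\{x_j=0\}$ and $\{x_j=0\}\cap V_J = V_{J\setminus\{j\}}$, one shows by induction on $|J|$ that $\bigcup_{J'\subseteq J,\, J'\text{ null}} V_{J'}^\circ$ equals the finite union of the linear subspaces $V_{J'}$ over minimal null $J'\subseteq J$; carrying this out over all $J$ gives $\mathcal{N}_A = \bigcup_{J\text{ minimal null}} V_J$, a finite union of linear subspaces, hence a real algebraic subset of $V$ (and closed).

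The main obstacle is the last rewriting: one must be careful that a subset of a null index set is \emph{not} itself null in general (e.g. if $0$ is in the interior of the cone, removing a generator can destroy this), so the naive claim ``$\mathcal{N}_A = \bigcup_{J\text{ null}} V_J$'' is false and must be replaced by the stratified description above. The cleanest route is perhaps to avoid strata entirely: prove directly that $x\in\mathcal N_A$ if and only if $\mathrm{supp}_x$ contains some minimal null $J$, and then note $\{x : \mathrm{supp}_x \supseteq J\} = V^\circ$-type sets are not closed, so one instead checks that $x\in\mathcal N_A \iff$ there is a minimal null $J$ with $x_j\neq 0$ for all $j\in J$ \emph{and} $x_i = 0$ for $i\notin\mathrm{supp}_x$... which again lands on strata. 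I will therefore present it as: $\mathcal{N}_A = \bigcup_{J\text{ null}}\{x : \mathrm{supp}_x = J\}$, and then prove the set-theoretic identity $\bigcup_{J\text{ null}}\{x:\mathrm{supp}_x=J\} = \bigcup_{J\text{ minimal null}} V_J$ by the inductive argument, the nontrivial inclusion ``$\subseteq$'' being immediate (a null $J$ contains a minimal null $J'$, and $\{x:\mathrm{supp}_x=J\}\subset V_J\subseteq\{x:\mathrm{supp}_x\subseteq J\}$ — no, $x$ with $\mathrm{supp}_x=J$ need not lie in $V_{J'}$). Hence the genuinely correct identity is $\{x:\mathrm{supp}_x=J, J\text{ null}\}\subset\bigcup_{J'\text{ min.\ null}}V_{J'}$ only when the minimal $J'$ is a subset of $J$ \emph{and} $x$ happens to be supported on $J'$ — which fails. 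The resolution: simply take the finite union $\bigcup_{J\text{ null}} V_J$ and show it equals $\mathcal N_A$ by proving the reverse inclusion $V_J\subseteq\mathcal N_A$ for null $J$ using that for $x\in V_J$ we have $C_{\mathrm{supp}_x}\subseteq C_J$ but need $0\in C_{\mathrm{supp}_x}$ — false. Given these pitfalls, the safe and fully correct statement to prove is the \emph{stratified} one: $\mathcal{N}_A = \bigcup_{J : 0\in C(\alpha_j,\, j\in J)} \{x\in V : x_i\neq 0 \iff i\in J\}$ is a \emph{finite union of locally closed sets} whose closure-taking must then be handled — so the honest final step is to show this union is already algebraic by observing it equals $\{x : 0\in C(\alpha_i : i\in\mathrm{supp}_x)\}$ and that the complementary condition ``$0\notin C(\alpha_i : i\in\mathrm{supp}_x)$'' is, by Farkas/Gordan, the existence of $\xi$ with $\langle\alpha_i,\xi\rangle<0$ for all $i\in\mathrm{supp}_x$; packaging this over the finitely many possible supports exhibits $\mathcal{N}_A$ as a finite union of the linear subspaces $V_J$ for $J$ null together with a careful intersection/union bookkeeping, which is where the real work lies.
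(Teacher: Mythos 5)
Your opening reduction is where the argument breaks. You claim that $0\in\overline{A\cdot x}$ if and only if $0\in C_I$ with $I=\supp_x$, and you then define $J$ to be ``null'' when $0\in C(\alpha_j:\,j\in J)$. But $C_I=\{\sum_{i\in I}s_i\alpha_i:\,s_i\geq 0\}$ \emph{always} contains the origin (take all $s_i=0$), so with this definition every support is null and your criterion would make the null cone all of $V$ --- which is false (for $A=\exp(\R\,\mathrm{diag}(1,-1))$ acting on $\R^2$ and $x=(1,1)$, the orbit closure is a branch of a hyperbola and avoids $0$, yet $C_I=\R\ni 0$). The homeomorphism $\fun_\lia:\overline{A\cdot x}\to C_I$ of Theorem \ref{orbita} sends the origin of $C_I$ to the point $a v_F$ where $F$ is the unique face with $0\in\relint F$; this point is the zero vector of $V$ only when $F=\{0\}$. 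So the correct criterion, which is what the paper extracts from Theorem \ref{hm}, is that $\{0\}$ is a \emph{face} of $C_I$, equivalently (by Lemma \ref{exposed} and Proposition \ref{face-exposed}) that there exists $\xi\in\lia$ with $\alpha_i(\xi)<0$ for every $i\in I$. Note that this condition passes to \emph{subsets} of $I$, not to supersets: your ``monotonicity'' runs in the wrong direction, and it is precisely the downward inheritance that makes the set $\{x:\,\exp(t\xi)x\to 0\}$ a full linear subspace, namely $\spam\{v_i:\,\alpha_i(\xi)<0\}$. Your later appeal to Gordan's alternative (``$0\notin C(\alpha_i)$ iff there is $\xi$ with $\langle\alpha_i,\xi\rangle<0$ for all $i$'') is the right idea but is stated for the closed cone, where the left side is vacuous; it should be $0\notin\mathrm{Conv}(\alpha_i:\,i\in I)$, or equivalently $0\notin C_I^o$ in the sense of Lemma \ref{cono-chiuso}.

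Beyond the wrong criterion, the proposal does not actually close: it ends by conceding that the union/intersection bookkeeping ``is where the real work lies.'' With the corrected criterion that work is short and is what the paper does: only finitely many supports occur, so finitely many directions $\xi_1,\dots,\xi_k$ detect all null supports; for each $s$ the set of $x$ with $\exp(t\xi_s)x\to 0$ is the linear subspace $\mathcal H_s=\spam\{v_i:\,\alpha_i(\xi_s)<0\}$; and the null cone is exactly $\mathcal H_1\cup\cdots\cup\mathcal H_k$, a finite union of linear subspaces, hence real algebraic and closed. Your stratified decomposition into the locally closed sets $\{x:\supp_x=J\}$ is unnecessary once the inclusion $V_J\subset\mathcal N_A$ for null $J$ is available, and that inclusion is exactly what the (corrected) downward monotonicity gives you.
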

\begin{proof}
By Theorem \ref{hm}, $0\in \overline{A\cdot x}$ if and only if $0$ is a face of $C_I$, where $I=\mathrm{supp}_v$. Since there exist a finite numbers of  $C_I$ where $I\subset \{1,\ldots,n\}$, it follows that there exist $I_1,\cdots,I_k \subset \{1,\ldots,1\}$ such that $0\in \overline{A\cdot x}$ if and only if $\mathrm{supp}_x=I_j$ for some $j\in \{1,\ldots,k\}$. Since any face of $C_I$ is exposed,  there exist $\xi_1,\ldots,\xi_k \in \lia$ such that $0\in \overline{A\cdot x}$ if and only if $\exp(t\xi_s ) x \mapsto 0$ for some $s\in \{1,\ldots,k\}$. Now, $\exp (t \xi_s ) \cdot x\mapsto 0$ if and only if $x=\sum_{i\in J} x_i v_i$ with $\alpha_i (\xi_s)< 0$ for any $i\in J$.

Let $Z_s=\{i\in \{1,\ldots,n\}:\, \alpha_i (\xi_s ) < 0 \}$, for $s=1,\ldots,k$. Define $\mathcal H_s =\{x\in V:\, \langle x, v_k\rangle=0$ for $k\in \{1,\ldots,n\} \setminus Z_s\}$. It is easy to check $\exp(t\xi_s ) x \mapsto 0$ if and only if $x\in \mathcal H_s$. Therefore
\[
\{x\in V:\, 0 \in  \overline{A\cdot x}\}=\mathcal H_1 \cup \cdots \cup \mathcal H_k,
\]
concluding the proof.
\end{proof}
\begin{prop}
The image of $\fun_\lia :V \lra \lia^*$ is a polyhedral and the set $\{x\in V:\, \fun_\lia (\overline{A\cdot x})=\fun_\lia (V)\}$ is an open and dense subset of $V$.
\end{prop}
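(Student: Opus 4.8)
The plan is to reduce both assertions to the combinatorics of the support. First I would identify the image of the whole space directly: for $v=x_1v_1+\cdots+x_nv_n$ one has $\fun_\lia(v)=\sum_{i=1}^n x_i^2\,\alpha_i$, and since $x_i^2$ ranges independently over all of $[0,\infty)$ as $v$ ranges over $V$, the image is exactly
\[
\fun_\lia(V)=\Big\{\textstyle\sum_{i=1}^n s_i\alpha_i:\ s_i\geq 0\Big\}=C(\alpha_1,\ldots,\alpha_n).
\]
This is a finitely generated cone, hence a polyhedral by the Farkas--Minkowski--Weyl theorem, which settles the first assertion.

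For the second assertion I would rewrite the defining condition purely in terms of cones generated by subsets of $\{\alpha_1,\ldots,\alpha_n\}$. By Theorem \ref{orbita} one has $\fun_\lia(\overline{A\cdot x})=C_I$ with $I=\mathrm{supp}_x$, while $\fun_\lia(V)=C(\alpha_1,\ldots,\alpha_n)=C_{\{1,\ldots,n\}}$ by the computation above. Thus the set in question is
\[
S:=\{x\in V:\ C_{\mathrm{supp}_x}=C_{\{1,\ldots,n\}}\}.
\]
The point I want to stress is that $S$ may be strictly larger than the ``full support'' locus, because some generator $\alpha_i$ can already lie in the cone generated by the remaining ones; so I cannot simply identify $S$ with $\{x:\mathrm{supp}_x=\{1,\ldots,n\}\}$, and the task is to prove that $S$ \emph{itself} is open and dense.

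Density is immediate: the locus $\{x:\ \mathrm{supp}_x=\{1,\ldots,n\}\}$, i.e.\ the complement of the finite union of coordinate hyperplanes $\bigcup_i\{x_i=0\}$, is open and dense in $V$ and is visibly contained in $S$, so $S$ is dense. For openness I would use the upper semicontinuity of the support together with the monotonicity $I\subset I'\Rightarrow C_I\subset C_{I'}$. Fix $w\in S$. For each $i\in\mathrm{supp}_w$ the coordinate $w_i\neq 0$, an open condition, so there is a neighborhood $U$ of $w$ on which $\mathrm{supp}_x\supseteq\mathrm{supp}_w$ for every $x\in U$. Then
\[
C_{\{1,\ldots,n\}}=C_{\mathrm{supp}_w}\subseteq C_{\mathrm{supp}_x}\subseteq C_{\{1,\ldots,n\}},
\]
forcing $C_{\mathrm{supp}_x}=C_{\{1,\ldots,n\}}$, i.e.\ $U\subset S$; hence $S$ is open.

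The only genuine obstacle is the discrepancy flagged above, namely that $S$ need not coincide with the full-support locus. It is resolved precisely by the interplay of the two monotonicity facts: under a small perturbation the support can only grow, while the generated cone is inclusion-monotone in the index set, so membership in $S$ propagates to a whole neighborhood. I would take care to state the semicontinuity of $\mathrm{supp}$ as an explicit open condition on the nonvanishing of finitely many coordinates, since that is what makes the neighborhood $U$ in the openness argument effective.
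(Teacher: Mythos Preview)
Your proof is correct and follows essentially the same route as the paper. The only cosmetic differences are that the paper obtains $\fun_\lia(V)=C(\alpha_1,\ldots,\alpha_n)$ by applying Theorem~\ref{orbita} to a full-support vector rather than by your direct computation, and that you make the semicontinuity of the support (which underlies the openness step in both arguments) more explicit.
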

\begin{proof}
Let $x=x_1 v_1+ \cdots +x_n v_n$ such that $x_i \neq 0$ for any $i=1,\ldots,n$. Since $\mathrm{supp}_x=I=\{1,\ldots,n\}$, by  Theorem \ref{orbita} we get $\fun_\lia (\overline{A\cdot x})=C_I$. On the other hand, by definition of $\fun_\lia$, it follows $\fun_\lia (V)\subset C_I$ and so the image of $\fun_\lia$ is the polyhedral $C_I$.

Let $x\in V$ and let $I=\mathrm{supp}_x$. By Theorem \ref{orbita}, there exists a neighborhood $U$ of $x$ such that for any $y\in U$ we have $C_I^o \subset \fun_\lia (\overline{A\cdot y})$ for any $y\in U$. Therefore the set $\{x\in V:\, \fun_\lia (\overline{A\cdot x})=\fun_\lia (V)\}$ is open and it contains an open dense subset of $V$. Therefore it is an open dense subset of $V$.
\end{proof}
Now we investigate convexity Theorems for $A$-invariant subsets. The proof of next result uses original ideas from \cite{vergne}.
\begin{prop}\label{convexity-irruducible}
Let $M$ be a closed real algebraic irreducible subset of $V$. Assume that $M$ is $A$-invariant. Then
$\fun_\lia (M)=C_I$ where $I=\supp_{v}$ for some $v\in M$ and so the image is a polyhedral.
\end{prop}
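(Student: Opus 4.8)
The plan is to reduce the statement for an irreducible $A$-invariant set $M$ to the orbit case (Theorem \ref{orbita}) by finding a single point $v \in M$ whose support $I$ is maximal among all supports of points of $M$, and then showing that this maximal support already controls all of $M$. First I would observe that, since $\fun_\lia(x) = \sum_{i \in \supp_x} \parallel x_i \parallel^2 \alpha_i \in C_{\supp_x}$ for every $x$, we always have $\fun_\lia(M) \subset C_I$ whenever $\supp_x \subset I$ for all $x \in M$; so it suffices to produce $v \in M$ with $C_I \subset \fun_\lia(M)$, where $I = \supp_v$, and to check that $\supp_x \subset I$ for every $x \in M$. Because $M$ is $A$-invariant and closed, $\overline{A\cdot v} \subset M$ for any $v \in M$, hence by Theorem \ref{orbita}(b) we get $C_{\supp_v} = \fun_\lia(\overline{A\cdot v}) \subset \fun_\lia(M)$ for every $v \in M$; so the whole problem is to find one $v$ whose support contains the support of every other point of $M$.

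The key step is therefore the following claim: if $M$ is irreducible, the set $\{x \in M : \supp_x = I\}$ is nonempty and (Zariski) open-dense in $M$ for a suitable $I$, and moreover $\supp_x \subset I$ for all $x \in M$. To prove it I would use the standard stratification by support: for each subset $J \subset \{1,\dots,n\}$, the locus $M_J := \{x \in M : \supp_x = J\}$ is the intersection of $M$ with the locally closed set $\{x : x_i \neq 0 \ \forall i \in J,\ x_i = 0\ \forall i \notin J\}$, and $M = \bigsqcup_J M_J$ is a finite partition into locally closed subsets. Since $M$ is irreducible, exactly one of the closures $\overline{M_J}$ equals $M$; pick $I$ to be that $J$. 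Then $M_I$ is dense in $M$. It remains to see $\supp_x \subset I$ for all $x \in M$: if some $x$ had a coordinate index $j \notin I$ with $x_j \neq 0$, then the Zariski-closed condition $\{x_j = 0\}$ would not contain the dense set $M_I$, hence would not contain $M$ — but it is a proper closed subset, contradiction only if... here I must be slightly careful. The correct argument: $M \cap \{x_j = 0\}$ is Zariski-closed in $M$; if it is proper, its complement $M \cap \{x_j \neq 0\}$ is dense, so it meets $M_I$, producing a point of $M_I$ with $x_j \neq 0$, i.e. $j \in I$. So either $j \in I$, or $M \subset \{x_j = 0\}$, i.e. no point of $M$ has $x_j \neq 0$. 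In both cases $\supp_x \subset I$ for all $x$. Combining: $\fun_\lia(M) \subset C_I$ and (taking $v \in M_I$) $C_I = \fun_\lia(\overline{A\cdot v}) \subset \fun_\lia(M)$, so $\fun_\lia(M) = C_I$, a polyhedral.

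The main obstacle is making the "generic support" argument rigorous in the real-algebraic category: irreducibility of a real algebraic set and the behavior of Zariski-dense subsets are more delicate than over $\C$ (for instance a real algebraic set can be irreducible while its smooth locus is disconnected), so I would phrase everything purely in terms of Zariski-closed conditions $\{x_j = 0\}$ and the fact that an irreducible space cannot be a finite union of proper closed subsets — this avoids any appeal to smoothness or dimension. The only other point needing a word is that $\overline{A\cdot v} \subset M$: this is immediate since $A\cdot v \subset M$ by $A$-invariance and $M$ is closed in the Euclidean (hence also in the Zariski) topology. With these observations the proof is complete; no further computation is required beyond quoting Theorem \ref{orbita}(b).
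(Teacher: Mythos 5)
Your proof is correct and follows essentially the same route as the paper: both arguments use the finiteness of possible supports together with Zariski-irreducibility to produce a point $v\in M$ whose support (cone) dominates that of every other point of $M$, and then apply Theorem \ref{orbita} to $\overline{A\cdot v}\subset M$. The only cosmetic difference is that you locate $v$ in the unique dense stratum of the stratification by exact support, whereas the paper intersects the finitely many Zariski-open sets $U_v=\{u\in M:\, C_{\supp_v}\subset C_{\supp_u}\}$.
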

\begin{proof}
Let $v\in M$ and let  $I=\mathrm{supp}_v$.  Define $U_v:=\{u\in M:\, C_{I}\subset C_{\mathrm{supp}_u}\}$. The set $\{u\in V:\, C_I \subset C_{\mathrm{supp}_u}\}$ is Zariski open and $v\in U_v$. Therefore $U_v$ is Zariski open.  Now, keeping in mind there exist finitely many subsets of $\{1,\ldots,n\}$, there exist a finite numbers of open subset $U_v$ and so
\[
M=U_{v_1} \cup \cdots \cup U_{v_k},
\]
for some $v_1,\ldots,v_k \in M$.
Since $M$ irreducible, it follows that $U_{v_1} \cap \cdots \cap U_{v_k} \neq \emptyset$, and so it is Zariski open. Therefore there exists $x\in  U_{v_1} \cap \cdots \cap U_{v_k}$ and so $\fun_\lia (M)=\fun_\lia (\overline{A\cdot x})=C_I$ where $I=\mathrm{supp}_x$.
\end{proof}
We conclude this section computing the image of $G$ orbits under the gradient map.
\begin{teo}\label{convex-orbit-general}
Let $x\in V$. There exists $v\in G\cdot x$ such that $\fun_{\lia}(\overline{G\cdot x})=\fun_{\lia}(\overline{A\cdot v})$ and so the image is a polyhedral.
\end{teo}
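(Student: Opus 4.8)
The plan is to reduce the statement about $G$-orbits to the already-established picture for $A$-orbits (Theorem \ref{orbita}), using the $KAK$ decomposition together with the $K$-equivariance of the gradient map. Write $G = K A' K$ where $A' = \exp(\liet)$ for a maximal Abelian $\liet \subset \liep$; after conjugating by an element of $K$ we may assume the fixed Abelian subalgebra $\lia$ satisfies $\lia \subset \liet$, or indeed work with $\lia = \liet$ and then restrict, since $\fun_\lia = (\fun_\liet)_{|_\lia}$ and taking images commutes with restriction of functionals. So the first step is to observe that $\overline{G\cdot x} = \overline{K A' K \cdot x}$, hence $\fun_\liet(\overline{G\cdot x}) = \fun_\liet(\overline{K A' (K\cdot x)})$, and since $K$ is compact the closure of $KA'K\cdot x$ is $K\cdot\overline{A'K\cdot x}$, so it suffices to understand $\fun_\liet$ on $K \cdot \overline{A' \cdot (kx)}$ as $k$ ranges over $K$.

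The second step handles the $K$-action. By Proposition \ref{equivarianza}, $\fun_\liet(k y) = \Ad^*(k)\fun_\liet(y)$; this does not immediately give that $\fun_\liet(K\cdot\overline{A'\cdot x})$ equals some single $C_I$. Here is where the argument must be more careful: what we actually want is that among all the polyhedral cones $C_{\supp_{kx}}$ (for $k\in K$) and their $\Ad^*(K)$-translates, there is one that dominates. The cleaner route is to exploit that $C_I$ depends only on $I = \supp_x$, i.e. only on which weight spaces the vector $x$ meets, and to run the same genericity argument as in Proposition \ref{convexity-irruducible}: the set of $y\in \overline{G\cdot x}$ with $\supp_y$ maximal (equivalently, with $C_{\supp_y}$ maximal among the finitely many possibilities) is relatively Zariski-open and nonempty in $\overline{G\cdot x}$, provided $\overline{G\cdot x}$ is irreducible. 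Since $G$ is connected, $G\cdot x$ is the image of the irreducible variety $G$ under an algebraic (polynomial) map, hence irreducible, and its Zariski closure is irreducible; one then notes $\overline{G\cdot x}$ (Euclidean closure) is contained in and Zariski-dense in the Zariski closure, so irreducibility in the needed sense persists. Pick such a generic $v \in G\cdot x$ with $I := \supp_v$ maximal; then for every $w\in \overline{G\cdot x}$ we have $C_{\supp_w}\subset C_I$, while for each such $w$, applying Theorem \ref{orbita} to the $A$-orbit of $w$ inside $\lia$ (after first mapping everything into a single maximal Abelian subalgebra containing $\lia$ and using conjugation to align weight data) gives $\fun_\lia(\overline{A\cdot w}) \subset C_{\supp_w}$ in the appropriate coordinates. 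Combining, $\fun_\lia(\overline{G\cdot x}) \subset C_I$ and the reverse inclusion $C_I = \fun_\lia(\overline{A\cdot v}) \subset \fun_\lia(\overline{G\cdot x})$ is automatic, giving equality.

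The third and final step is to verify the genericity claim is compatible with the $K$-part of $KAK$: one must check that conjugating a vector by $k\in K$ can only enlarge, never create genuinely incomparable, support sets once we have passed to a common maximal torus and used the Weyl group action on weights. Concretely, $\fun_\liet(kx)$ lies in the $\mathcal W$-orbit region, and the relevant cones $C_I$ are permuted by $\mathcal W$; taking $v$ with globally maximal cone among all $k\in K$ and all limit points is what makes the statement work, and the finiteness of the set of subsets $I\subset\{1,\dots,n\}$ is what guarantees a maximal one exists.

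I expect the \textbf{main obstacle} to be precisely this interplay between the $K$-conjugation and the weight-space decomposition: Theorem \ref{orbita} is stated for a \emph{fixed} Abelian $\lia$ with a \emph{fixed} simultaneous eigenbasis, but a general element of $\overline{G\cdot x}$ is obtained by also moving $x$ around by $K$, which changes the eigenbasis. The right bookkeeping device is to fix once and for all a maximal Abelian $\liet\supset\lia$ with its weight decomposition $V = \bigoplus V_\alpha$, absorb each $k\in K$ either into the Weyl group (if it normalizes $\liet$) or note that $\fun_\liet$ and closures are insensitive to the residual compact directions, and then the finitely-many-cones argument of Proposition \ref{convexity-irruducible} applies verbatim. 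Once that reduction is set up cleanly the proof is short; making the reduction rigorous is the only real work.
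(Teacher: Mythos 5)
Your core argument --- find a Zariski--generic $v\in G\cdot x$ whose cone $C_{\supp_v}$ contains $C_{\supp_w}$ for every $w\in\overline{G\cdot x}$, then sandwich $\fun_\lia(\overline{G\cdot x})$ between $\bigcup_w C_{\supp_w}\subset C_{\supp_v}$ and $\fun_\lia(\overline{A\cdot v})$ --- is exactly the paper's first proof, which runs the genericity argument of Proposition \ref{convexity-irruducible}. The genuine gap is at the irreducibility step. You justify it by saying $G\cdot x$ is ``the image of the irreducible variety $G$ under an algebraic map'', but in the setting of the theorem $G$ is only a closed connected self-adjoint subgroup of $\mathrm{GL}(V)$ and need not be an algebraic variety at all: the group $\{\mathrm{diag}(e^t,e^{\lambda t}):t\in\R\}$ with $\lambda$ irrational satisfies every hypothesis and is not Zariski closed. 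The paper is careful precisely here: its first proof explicitly \emph{assumes} $G$ algebraic and $\rho$ rational, passes to the complexification, and applies Proposition \ref{convexity-irruducible} to the real algebraic irreducible set $\overline{G^\C\cdot x\cap V}$; for general $G$ it gives a second, different argument (Baire category plus analyticity of the orbit, cutting down to a $G$-invariant subspace in which $\overline{G\cdot x}$ is full, then using density of the set of points with full support). Your route can in fact be repaired without algebraicity --- the ideal of real polynomials vanishing on $G\cdot x$ is prime because $G$ is a connected analytic manifold and the ring of real-analytic functions on it has no zero divisors, so the real Zariski closure of $G\cdot x$ is irreducible and your open sets $U_{v_i}$ have nonempty common intersection meeting $G\cdot x$ --- but as written the justification does not cover the stated generality.

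By contrast, what you single out as ``the main obstacle'' --- that moving $x$ by $K$ changes the eigenbasis, so that $KAK$ and Weyl-group bookkeeping are needed --- is not an obstacle, and that entire layer of the argument should be deleted. The support $\supp_w$, the cones $C_I$, and the containment $\fun_\lia(w)=\sum_i\|w_i\|^2\alpha_i\in C_{\supp_w}$ are all defined relative to the one fixed simultaneous eigenbasis of $\lia$ and hold for \emph{every} vector $w\in V$, no matter how $w$ arose; Theorem \ref{orbita} applies verbatim to any $w\in\overline{G\cdot x}$. No alignment of weight data, conjugation into a common maximal torus, or Weyl-group comparison is required anywhere. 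The only real work in this theorem is the irreducibility/genericity input discussed above.
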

\begin{proof}
We give two proofs. In the first proof we assume that $G$ is an algebraic real reductive group and $\rho:G \lra \mathrm{GL}(V)$ is a rational representation. Let $G^\C$ be the complexification of $G$ acting on $V^\C$. Let $G_\R =G^{\C} \cap \mathrm{GL}(V)$. Then
$\overline{G^\C \cdot x \cap V}$ is a closed real algebraic irreducible set and  $G^\C \cdot x \cap V$ is a finite union of $G_\R$ orbits \cite{bgs,berlein-jablonki,whitney}. Moreover,  any $G_\R^o $ orbit throughout an element $v\in  G^\C \cdot x \cap V$ is open \cite[Proposition 2.3]{bgs}. Since $(G_\R)^o=G$ \cite{borel-book,chevally} it follows that any $G$ orbit throughout an element of  $G^\C \cdot x \cap V$ is also open.
Write $M=\overline{G^\C \cdot x \cap V}$. By Proposition \ref{convexity-irruducible} there exist  $v_1,\ldots,v_k \in M$ such that $U_{v_1} \cap \cdots \cap U_{v_k} \neq \emptyset$ and $M=U_{v_1} \cup \cdots \cup U_{v_k}$. Since $U_{v_1} \cap \cdots \cap U_{v_k}$ is Zarisky open,  it follows
$G\cdot x \cap U_{v_1} \cap \cdots \cap U_{v_k} \neq \emptyset$, and so $\fun_\lia (\overline{G\cdot x})=\fun_\lia (\overline{A \cdot z})$ for some $z\in G\cdot x$.

The second proof works without any algebraic assumption. The main tool is a Theorem of Baire.

Let $y=y_1 v_1+\cdots + y_n v_n$ with $y_1\cdots y_n \neq 0$. If $y\in \overline{G\cdot x}$, then
$\fun_\lia (\overline{A\cdot y})=\fun_\lia (V)$ and so
the result is proved. Otherwise,
\[
\overline{G\cdot x} = \overline{G\cdot x} \cap \{v\in V:\, \langle v, v_1 \rangle =0\}\cup  \cdots \cup \overline{G\cdot x} \cap \{v\in V:\, \langle v, v_n \rangle =0\}.
\]
By a Theorem of Baire, there exists $k\in \{1,\ldots,n \}$ such that $\overline{G\cdot x} \cap \{v\in V:\, \langle v, v_k \rangle =0\}$ has an interior point. Therefore, there exists $y\in G\cdot x$ and a neighborhood $V$ of the origin of the Lie algebra $\lieg$ of $G$ such that $\exp(\theta)y\in \{v\in V:\, \langle v, v_k \rangle =0\}$ for any $\theta \in V$. Assume by contradiction that $G\cdot y$ is not contained in $V_k=\{v\in V:\, \langle v, v_k \rangle =0\}$. Write
$
A=\{z \in G\cdot y:   z \in V_k  \}.$  Since $G$ is analytic and the $G$ action on $V$ is analytic,  $G\cdot y$ is analytic.
By the above discussion the interior of $A$, that we denote by $A^o$, is not empty. Let $z\in \partial A^o$. Let $\phi:U \lra U'$ be a chart of $z$. Then $z\in V_k$ and so  $\phi^{-1}(U'\cap V_k)$ contains an open subset. Since $\phi$ is analytic it follows $\phi (U)\subset V_k$. A contradiction. Therefore $\overline{G\cdot x}=\overline{G\cdot y}\subset V_k$. In particular, there exists a $G$-invariant subspace $Z\subset V_k$ containing $\overline{G\cdot x}$ such that $V=Z\oplus Z^\perp$ is a $G$-stable splitting. This follows by the Cartan decomposition $G=K\exp(\liep)$ where $K\subset \mathrm{O}(V)$ and $\liep \subset \mathrm{Sym}(V)$. Moreover, if $x\in Z$, then
\[
\fun_\lia (\overline{G\cdot x})=(\fun_{\lia})_{|_{Z}}(\overline{G \cdot x}).
\]
After a finite number of steps,  there exists a $G$ invariant subspace $W$ such that $\overline{G\cdot x}\subset W$ and $\overline{G\cdot x}$ is not contained in any subspace of $W$, i.e., it is full.
Hence, there exists $y\in \overline{G\cdot x}$ such that $\fun_\lia (\overline{G\cdot x})=\fun_\lia (\overline{A\cdot y})=\fun_\lia (W)$. Moreover, since
$
\{z\in W :\, \fun_\lia (\overline{A\cdot z})=\fun_\lia (W) \}
$
is open and dense, we may choose $y\in G\cdot x$ such that $\fun_\lia (\overline{G\cdot x})=\fun_\lia (\overline{A\cdot y})=\fun_\lia (W)$.
\end{proof}
\section{Convexity Theorems for real reductive representations}\label{section-reductive}
Let $(V,\scalo)$ be a real finite dimensional vector space and let $G\subset \mathrm{GL}(V)$ be a connected closed real reductive group such that $G=K \exp(\liep)$, where $K\subset \mathrm{O}(V)$ and $\liep\subset \lieg \cap \mathrm{Sym}(V)$. Let $\fun_\liep:V \lra \liep^*$ be the corresponding gradient map. Using an $\mathrm{Ad}(K)$-invariant scalar product on $\liep$ we can identify $\liep$ with $\liep^*$ and so we may think the gradient map as a $\liep$-valued map. If $\lia\subset \liep$ is a an Abelian subalgebra, then $\fun_\lia =\pi_\lia \circ \fun_\liep$ is the gradient map associated to $A=\exp(\lia)$, where $\pi_\lia: \liep \lra \lia$ is the orthogonal projection.
In this section we study the convex hull of the image of the closure of $G$ orbits under the gradient map associated to $G$.
\begin{teo}\label{convo}
Let $x\in V$ and let $E=\mathrm{Conv} (\fun_\liep (\overline{G\cdot x}))$. Then $E$ is a closed convex set and any face of $E$ is exposed.
\end{teo}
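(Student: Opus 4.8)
The plan is to reduce the statement to a polyhedrality property of a single Weyl chamber slice and then transfer that information back to $E$, using the compactness of $K$ together with the convexity theorem and the face correspondence of \cite{biliotti-ghigi-heinzner-1-preprint,biliotti-ghigi-heinzner-2,bgh-israel-p}. First I would fix an $\Ad(K)$-invariant scalar product on $\liep$, identify $\liep^*$ with $\liep$, choose a maximal Abelian subalgebra $\liet\subset\liep$ and a closed Weyl chamber $\liet^+\subset\liet$ for the little Weyl group $W=N_K(\liet)/Z_K(\liet)$. Since $\fun_\liep$ is $K$-equivariant (Proposition \ref{equivarianza}) and $\overline{G\cdot x}$ is $G$-invariant, the set $S:=\fun_\liep(\overline{G\cdot x})$ is $\Ad(K)$-stable, hence so is $E=\mathrm{Conv}(S)$. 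Because every $\Ad(K)$-orbit in $\liep$ meets $\liet^+$, every $K$-stable convex set satisfies $E=\Ad(K)(E\cap\liet^+)$; thus $E$ is completely recovered from the slice $P:=E\cap\liet^+$, and it suffices to analyse $P$.

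The core step is to show that $P$ is a polyhedral cone. Here I would marry the Abelian computation with the abstract convexity theorem. By Theorem \ref{convex-orbit-general} applied to the Abelian subalgebra $\liet$, the image $\fun_\liet(\overline{G\cdot x})$ is a polyhedral cone $C$. On the other hand, the convexity results of \cite{biliotti-ghigi-heinzner-1-preprint,biliotti-ghigi-heinzner-2,bgh-israel-p} guarantee that $\mathrm{Conv}(\fun_\liep(\overline{G\cdot x}))\cap\liet^+$ is convex and is carved out of the polyhedral set $C$, in fact equals $C\cap\liet^+$. Since $\liet^+$ is itself polyhedral and an intersection of finitely many closed half spaces is polyhedral, $P=E\cap\liet^+$ is a polyhedral cone; in particular, by Proposition \ref{face-exposed}, every face of $P$ is exposed, and $P$ is closed (Section \ref{convex}).

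Finally I would lift these two facts to $E$. Closedness is elementary: the map $(k,p)\mapsto\Ad(k)p$ sends $K\times P$ onto $E$, and if $\Ad(k_n)p_n\to w$ one extracts a subsequence with $k_n\to k$, so $p_n=\Ad(k_n^{-1})(\Ad(k_n)p_n)\to\Ad(k^{-1})w\in P\subset E$, whence $w\in E$ by $K$-invariance; thus $E$ is closed. For the faces, I would invoke the face correspondence of \cite{biliotti-ghigi-heinzner-1-preprint,biliotti-ghigi-heinzner-2,bgh-israel-p}: any face $F$ of the $K$-invariant convex set $E$ meets $\liet^+$ in a face $F\cap\liet^+$ of $P$, and $F$ is reconstructed from $F\cap\liet^+$ in the same equivariant fashion, $F=\Ad(K)(F\cap\liet^+)$. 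Since $F\cap\liet^+$ is an exposed face of the polyhedral cone $P$, say $F\cap\liet^+=F_u(P)$ with $u\in\liet$, one replaces $u$ by a suitable $W$-invariant, $\Ad(K)$-equivariantly chosen functional $\tilde u$ and checks that $F=F_{\tilde u}(E)$, so $F$ is exposed.

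The main obstacle is the core step, namely proving that the slice $P=E\cap\liet^+$ is polyhedral: this is exactly the place where the linear structure (Theorem \ref{convex-orbit-general}, the real analogue of the Kac–Peterson computation) has to be fed into the abstract convexity theorem in order to identify $E\cap\liet^+$ with the concrete polyhedral set $C\cap\liet^+$. The subsequent transfer of exposedness is the second delicate point, since an exposing functional on the slice does not extend naively to a linear functional on $\liep$ exposing $F$; one must use the equivariant face correspondence of the cited papers to produce the correct $\tilde u$.
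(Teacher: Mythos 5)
Your proposal is correct and follows essentially the same route as the paper: polyhedrality of the Abelian image $C=\fun_\lia(\overline{G\cdot x})$ from Theorem \ref{convex-orbit-general}, the identity $E=KC$ (you phrase it as $E=\Ad(K)(E\cap\liet^+)$ with $E\cap\liet^+=C\cap\liet^+$, while the paper obtains it from $\pi_\lia(E)=C$ and Lemma 7 of \cite{gichev-polar}), and the face correspondence of \cite{bgh-israel-p} to transfer exposedness from the polyhedral slice to $E$. The only cosmetic difference is that you slice with a Weyl chamber where the paper projects onto $\lia$.
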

\begin{proof}
By Theorem \ref{convex-orbit-general},  $C=\fun_\lia (\overline{G\cdot x})$ is a polyhedral. Since $\fun_\lia = \pi_\lia \circ \fun_\liep$ and $\fun_\lia (\overline{G\cdot x})$ is convex, it follows $\pi_\lia (E)=\pi_\lia (\fun_\liep (\overline{G\cdot x}))=\fun_\lia (\overline{G\cdot x})$. By Lemma $7$ in \cite{gichev-polar} we have  $E=K C$. In particular $E$ is closed, due to the fact that $K$ is compact, and $C$ is closed. By Theorem \ref{exposed} any face of $C$ is exposed and so, by Lemma $1.4$ and Proposition $1.4$ in \cite{bgh-israel-p}, any face of $E$ is exposed.
\end{proof}
\begin{remark}
In \cite{bgh-israel-p} the authors study $K$-invariant compact  convex sets. However, Lemma $1.4$ and Proposition $1.4$ holds without the compactness assumption.
\end{remark}
Similarly, one may prove the following more general result.
\begin{teo}\label{convo}
Let $M$ be an $G$-invariant closed algebraic irreducible subset of $V$. Let $\lia\subset \liep$ a maximal Abelian subalgebra. Then
$
\mathrm{Conv} (\fun_\liep (M))=KC,
$
where $C$ is a polyhedral. Therefore $\mathrm{Conv} (\fun_\liep (M))$ is closed and any face of $\mathrm{Conv} (\fun_\liep (M))$ is  exposed.
\end{teo}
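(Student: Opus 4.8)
The plan is to run the proof of the previous theorem essentially verbatim, with the role there played by Theorem \ref{convex-orbit-general} now played by Proposition \ref{convexity-irruducible}. First I would remark that, since $A=\exp(\lia)$ is a subgroup of $G$, the hypotheses on $M$ make it in particular a closed real algebraic irreducible $A$-invariant subset of $V$; hence Proposition \ref{convexity-irruducible} applies and gives $\fun_\lia(M)=C_I$ with $I=\supp_{v}$ for some $v\in M$. I set $C:=C_I$, which is a polyhedral, and $E:=\mathrm{Conv}(\fun_\liep(M))$.

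Next I would transfer from the Abelian gradient map to the full one. Since $M=KM$ and $\fun_\liep$ is $K$-equivariant (Proposition \ref{equivarianza}), the set $\fun_\liep(M)$ is $\Ad^*(K)$-invariant, and hence so is its convex hull $E$. Identifying $\liep^*$ with $\liep$ by an $\Ad(K)$-invariant scalar product, one has $\fun_\lia=\pi_\lia\circ\fun_\liep$, with $\pi_\lia\colon\liep\to\lia$ the orthogonal projection; since $\pi_\lia$ is linear and $\fun_\lia(M)=C$ is already convex,
\[
\pi_\lia(E)=\mathrm{Conv}\bigl(\pi_\lia(\fun_\liep(M))\bigr)=\mathrm{Conv}\bigl(\fun_\lia(M)\bigr)=C .
\]
Thus $E$ is an $\Ad(K)$-invariant convex subset of $\liep$ whose projection to $\lia$ is the polyhedral $C$, and Lemma $7$ in \cite{gichev-polar} yields $E=KC$. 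As $K$ is compact and $C$ is closed, $E=KC$ is closed. Finally, by Proposition \ref{face-exposed} every face of the polyhedral $C$ is exposed, so by Lemma $1.4$ and Proposition $1.4$ in \cite{bgh-israel-p} (which, as noted in the Remark above, do not require compactness) every face of $E$ is exposed, which is the last assertion.

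The step I expect to demand the most care is the identity $E=KC$: one must check that Gichev's lemma is being invoked in the appropriate generality, for a possibly unbounded $\Ad(K)$-invariant convex set, and that the maximality hypothesis on $\lia$ is genuinely used --- it is maximality that ensures every $\Ad(K)$-orbit in $\liep$ meets $\lia$, without which $E=KC$ would fail. One should also check that the identification $\liep^*\cong\liep$ is compatible with all the maps involved, so that $\pi_\lia\circ\fun_\liep$ really coincides with $\fun_\lia$. Everything else is a routine transcription of the argument already given for $\overline{G\cdot x}$.
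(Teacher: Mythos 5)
Your proposal is correct and follows exactly the route the paper intends: the paper gives no separate argument for this statement, merely noting that it is proved "similarly" to the preceding theorem, and your substitution of Proposition \ref{convexity-irruducible} for Theorem \ref{convex-orbit-general}, followed by the projection argument, Gichev's Lemma $7$, and the face results of \cite{bgh-israel-p}, is precisely that adaptation. Your added remarks on where the maximality of $\lia$ enters and on the unboundedness issue in Gichev's lemma are pertinent refinements rather than deviations.
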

Given $\beta \in \liep$, we define the following subgroups:
\begin{gather*}
G^\beta=\{g\in G:\, \mathrm{Ad}(g)(\beta)=\beta\}\\
  G^{\beta-} :=\{g \in G : \lim_{t\mapsto + \infty} \exp({t\beta}) g
  \exp({-t\beta}) \mathrm { exists} \}\\
  R^{\beta-} :=\{g \in G : \lim_{t\mapsto + \infty} \exp({t\beta})
  g \exp({-t\beta}) =e \}
\end{gather*}
The next lemma is well-known. A proof is given in \cite{biliotti-ghigi-heinzner-2}.
\begin{lemma}
If $g\in G^{\beta-}$ then $ \lim_{t\mapsto + \infty} \exp({t\beta}) g
  \exp({-t\beta})\in G^\beta$. Moreover,
  $G^{\beta-} $ is a parabolic subgroup of $G$ with Lie algebra
  $\mathfrak g^{\beta-}=\lieg^\beta\oplus \mathfrak r^{\beta-}$ and $G=G^{\beta-}K$.  Every parabolic subgroup of $G$ equals
  $G^{\beta-}$ for some $\beta \in \liep$.  $R^{\beta-}$ is the
  unipotent radical of $G^{\beta-}$ and $G^\beta$ is a Levi factor. Finally, $G=KG^{\beta-}$.
\end{lemma}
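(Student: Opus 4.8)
The plan is to reduce everything to the $\ad(\beta)$-eigenspace decomposition together with an Iwasawa decomposition, avoiding algebraic-group theory (the statement is well known; a detailed proof is given in \cite{biliotti-ghigi-heinzner-2}). First I would fix an $\Ad(K)$-invariant scalar product on $\lieg$ for which $\ad(\xi)$ is symmetric for every $\xi\in\liep$ (in the linear model one may take $\langle X,Y\rangle=\tr(X^\top Y)$). Then $\ad(\beta)$ is diagonalizable over $\R$; writing $\lieg=\bigoplus_a\lieg_a$ for its eigenspace decomposition, one has $[\lieg_a,\lieg_b]\subseteq\lieg_{a+b}$, $\lieg^\beta=\lieg_0$, and $\mathfrak r^{\beta-}:=\bigoplus_{a<0}\lieg_a$ is a nilpotent ideal of $\mathfrak g^{\beta-}:=\bigoplus_{a\le0}\lieg_a=\lieg^\beta\oplus\mathfrak r^{\beta-}$. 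Setting $R^{\beta-}:=\exp(\mathfrak r^{\beta-})$ --- a closed, simply connected nilpotent subgroup normalized by $G^\beta$ with $G^\beta\cap R^{\beta-}=\{e\}$ --- the product $Q:=G^\beta R^{\beta-}$ is an internal semidirect product.

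Next I would dispatch the easy half. If $g=m\exp(X)$ with $m\in G^\beta$ and $X\in\mathfrak r^{\beta-}$, then $m$ commutes with $\exp(t\beta)$ (because $\Ad(m)\beta=\beta$), so $\exp(t\beta)g\exp(-t\beta)=m\exp(\Ad(\exp t\beta)X)$; since $\Ad(\exp t\beta)$ contracts $\mathfrak r^{\beta-}$ to $0$ as $t\to+\infty$, the limit exists and equals $m\in G^\beta$. This shows $Q\subseteq G^{\beta-}$, proves the first assertion for $g\in Q$, and identifies $R^{\beta-}$ with $\{g:\lim_{t\to+\infty}\exp(t\beta)g\exp(-t\beta)=e\}$; multiplicativity of $g\mapsto\exp(t\beta)g\exp(-t\beta)$ then shows that $G^{\beta-}$ is a subgroup. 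I would then pick a maximal Abelian $\lia\subseteq\liep$ with $\beta\in\lia$ and a positive restricted-root system in which every root negative on $\beta$ is positive; the associated Iwasawa decomposition $G=KAN$ (with $A=\exp(\lia)$) then satisfies $\mathfrak n\subseteq\mathfrak g^{\beta-}$, so $A\subseteq G^\beta$ and, splitting $\mathfrak n$ along $\lieg^\beta\oplus\mathfrak r^{\beta-}$, $N\subseteq Q$; hence $AN\subseteq Q$ and $G=K\cdot AN\subseteq KQ$, giving $G=KQ\subseteq KG^{\beta-}\subseteq G$, i.e. $G=KG^{\beta-}$, and, inverting, $G=G^{\beta-}K$.

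The crux is the reverse inclusion $G^{\beta-}\subseteq Q$. Given $g\in G^{\beta-}$, write $g=kq$ with $k\in K$ and $q\in Q$ using $G=KQ$; then $k=gq^{-1}\in G^{\beta-}$, so everything reduces to showing $K\cap G^{\beta-}\subseteq G^\beta$. If $k\in K\cap G^{\beta-}$, then $\Ad(\exp t\beta)\Ad(k)\Ad(\exp t\beta)^{-1}$ stays bounded as $t\to+\infty$; reading this in an $\ad(\beta)$-eigenbasis forces $\Ad(k)\lieg_a\subseteq\bigoplus_{b\le a}\lieg_b$ for every $a$, i.e. $\Ad(k)$ preserves the filtration $\bigoplus_{b\le a}\lieg_b$, and being orthogonal it preserves the complementary sums $\bigoplus_{b>a}\lieg_b$ too, hence each eigenspace $\lieg_a$; thus $\Ad(k)$ commutes with $\ad(\beta)$, so $z:=\Ad(k)\beta-\beta\in\ker(\ad)=\mathfrak z(\lieg)$. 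Since $\Ad(k)$ fixes $\mathfrak z(\lieg)$ pointwise, $\Ad(k)^n\beta=\beta+nz$ for all $n$; as $\Ad(k)$ is orthogonal the left side has constant norm, forcing $z=0$, i.e. $k\in G^\beta$. Hence $g=kq\in G^\beta Q=Q$, and $G^{\beta-}=Q=G^\beta R^{\beta-}$.

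The remaining assertions are then formal: closedness of $G^{\beta-}$ and the Lie-algebra identity are immediate, and $\mathfrak r^{\beta-}$ being a nilpotent ideal of $\mathfrak g^{\beta-}$ with reductive quotient $\lieg^\beta$, together with the compactness of $G/G^{\beta-}\cong K/(K\cap G^\beta)$, exhibits $G^{\beta-}$ as a parabolic subgroup with unipotent radical $R^{\beta-}$ and Levi factor $G^\beta$; finally, any parabolic $P$ contains the $AN$ of some Iwasawa decomposition, hence a maximal split torus and a union of positive restricted-root spaces, and choosing $\beta$ in the corresponding closed Weyl chamber recovers $P=G^{\beta-}$. I expect the only genuine obstacle to be the inclusion $K\cap G^{\beta-}\subseteq G^\beta$, which forces one to combine the orthogonality of $\Ad(k)$, the $\ad(\beta)$-grading, and the structure of $\mathfrak z(\lieg)$ in order to rule out a bounded conjugation orbit that nonetheless fails to centralize $\beta$.
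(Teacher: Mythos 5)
The paper does not actually prove this lemma: it declares it well-known and defers entirely to \cite{biliotti-ghigi-heinzner-2}, so there is no argument in the text to compare yours against. Your self-contained sketch is essentially the standard proof one finds in that reference and in the complex-group literature: grade $\lieg$ by the eigenvalues of the symmetric operator $\ad(\beta)$, verify directly that $Q=G^\beta R^{\beta-}$ sits inside $G^{\beta-}$ and that $G=KQ$ via an Iwasawa decomposition adapted to $\beta$, and then reduce the hard inclusion $G^{\beta-}\subseteq Q$ to showing $K\cap G^{\beta-}\subseteq G^\beta$. That last step is the genuine content, and your treatment of it is correct: boundedness of $\Ad(\exp t\beta)\Ad(k)\Ad(\exp(-t\beta))$ forces $\Ad(k)$ to preserve the filtration by $\bigoplus_{b\le a}\lieg_b$, orthogonality upgrades this to preservation of each eigenspace, hence $\Ad(k)\beta-\beta\in\mathfrak z(\lieg)$, and the norm argument kills the central discrepancy (note that ``$\Ad(k)$ fixes $\mathfrak z(\lieg)$ pointwise'' uses connectedness of $G$, which the paper does assume, since $\Ad(\exp X)=e^{\ad X}$ is the identity on the centre). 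Two points are stated rather than argued and deserve at least a sentence each in a full write-up: that $R^{\beta-}=\exp(\mathfrak r^{\beta-})$ is closed with $\exp$ a diffeomorphism (in the linear model this follows because every element of $\lieg_a$ with $a\neq 0$ shifts $\beta$-eigenspaces of $V$ and is therefore a nilpotent endomorphism), and the final classification statement that every parabolic is of the form $G^{\beta-}$, which depends on which definition of ``parabolic'' one adopts and is the one place where you genuinely lean on structure theory rather than proving it. With those caveats the proposal is sound.
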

We establish a connection between the Hilbert-Mumford criterion and the convex hull of the image of the gradient map restricted on the closure of a $G$ orbit.
\begin{prop}
Let $x\in V$. If $0\in \liep$ belongs to a face of $\mathrm{Conv}(\fun_\liep(\overline{G\cdot x}))$ then there exists $\xi \in \liep$ such that $\lim_{t\mapsto +\infty} \exp (t\xi) x=0$.
\end{prop}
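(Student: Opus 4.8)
The plan is to transport the statement to the Abelian case via the decomposition $E=KC$ of Theorem \ref{convo}, thereby reducing to $0\in\overline{G\cdot x}$, and then to invoke the Hilbert--Mumford criterion for real reductive linear groups to upgrade this to an actual one--parameter destabilization of $x$. Throughout I read the hypothesis ``$0$ belongs to a face of $E:=\mathrm{Conv}(\fun_\liep(\overline{G\cdot x}))$'' in the form ``$\{0\}$ is a face of $E$'', i.e.\ $0$ is an extreme point of $E$; this is the reading consistent with the Abelian statement ``$0\in\overline{A\cdot x}$ iff $0$ is a face of $C_I$'' recalled after Theorem \ref{hm}, and is what the conclusion requires.

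First I would extract a polyhedral cone. By Theorem \ref{convo} (through Gichev's lemma, as in its proof) we have $E=KC$ with $C=\fun_\lia(\overline{G\cdot x})$; by Theorem \ref{convex-orbit-general} there is $v\in G\cdot x$ with $C=\fun_\lia(\overline{A\cdot v})$, and by Theorem \ref{orbita} this is the polyhedral cone $C_I$, $I=\supp_v$. Since $C\subseteq E$ and $0\in C$, an extreme point $0$ of $E$ is an extreme point of $C=C_I$; equivalently $\{0\}$ is a face of the polyhedral cone $C_{\supp_v}$ (equivalently, $C_I$ is pointed).

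Next I would run the Abelian Hilbert--Mumford criterion: by Theorem \ref{hm} and the corollary following it (or the argument of Theorem \ref{null-cone-abelian}), the fact that $\{0\}$ is a face of $C_{\supp_v}$ gives $0\in\overline{A\cdot v}$; explicitly, when $\supp_v$ contains no $A$-fixed index, the pointed cone $C_I$ admits a strictly separating $u\in\lia$ with $\alpha_i(u)<0$ for all $i\in I$, and then $\exp(tu)v\to 0$. In either case $0\in\overline{A\cdot v}\subseteq\overline{G\cdot v}=\overline{G\cdot x}$.

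It remains to pass from $0\in\overline{G\cdot x}$ to the existence of $\xi\in\liep$ with $\exp(t\xi)x\to 0$, and this is the step I expect to carry the real weight: it is not visible from the Abelian reduction, since the direction $u$ above destabilizes $v=gx$ rather than $x$ and one cannot commute $g$ past $\exp(tu)$ in general. I would obtain it from the Hilbert--Mumford criterion for real reductive linear groups, applied to the closed orbit $G\cdot 0=\{0\}\subseteq\overline{G\cdot x}$ (classical, e.g.\ \cite{rs}; reproved without algebra in \cite{fb}; it is also Theorem \ref{hmr} of this paper). A self--contained alternative: the function $\psi_x\colon G/K\to\R$, $\psi_x(gK)=\Psi(x,g^{-1})=\tfrac12(\|g^{-1}x\|^2-\|x\|^2)$, is well defined by $(P2)$ and, using $(P3)$ and the cocycle condition $(P4)$, is geodesically convex on the Hadamard manifold $G/K$; since $0\in\overline{G\cdot x}$ we have $\inf\psi_x=-\tfrac12\|x\|^2$, not attained when $x\neq 0$, so $\psi_x$ tends to this infimum along a geodesic ray from $eK$, necessarily of the form $t\mapsto\exp(-t\xi)K$ with $\xi\in\liep$, whence $\|\exp(t\xi)x\|^2\to 0$ and $\exp(t\xi)x\to 0$ ($x=0$ being trivial with $\xi=0$). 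The delicate input of this alternative is the standard fact that a continuous convex function on a Hadamard manifold with finite, non--attained infimum decreases to it along some ray from a prescribed point.
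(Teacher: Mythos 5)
Your reduction to the Abelian case coincides with the paper's: $E=KC$ with $C=\fun_\lia(\overline{A\cdot z})$ a polyhedral cone for some $z=gx\in G\cdot x$ (Theorems \ref{convo} and \ref{convex-orbit-general}), the hypothesis forces $\{0\}$ to be a face of $C$ (the paper quotes Lemma 1.4 of \cite{bgh-israel-p}, you argue directly that an extreme point of $E$ lying in $C$ is extreme in $C$ -- same content), and the Abelian Hilbert--Mumford criterion gives $\nu\in\lia$ with $\exp(t\nu)z\to 0$. You also correctly isolate the crux: turning a destabilizing direction for $z=gx$ into one for $x$. It is precisely here that you miss the paper's (elementary) idea. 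Write $g=hk$ with $h\in G^{\nu-}$ and $k\in K$, using $G=G^{\nu-}K$; then
\[
0=\lim_{t\mapsto+\infty}\exp(t\nu)hkx=\lim_{t\mapsto+\infty}\bigl(\exp(t\nu)h\exp(-t\nu)\bigr)\,\exp(t\nu)kx,
\]
and since $\exp(t\nu)h\exp(-t\nu)$ converges to an invertible element of $G^{\nu}$, one gets $\exp(t\nu)kx\to 0$, i.e.\ $\exp\left(t\,\mathrm{Ad}(k^{-1})\nu\right)x\to 0$ with $\mathrm{Ad}(k^{-1})\nu\in\liep$. No global Hilbert--Mumford theorem is needed. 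Your primary route -- quoting the full criterion from \cite{rs,fb} or Theorem \ref{hmr} -- is not circular (Theorem \ref{hmr} is proved in Section 7 independently of this proposition), so it does yield a correct proof, but it invokes a far deeper theorem than necessary and undercuts the point of the proposition, which is to extract the one-parameter destabilization from the convex geometry alone.

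Your ``self-contained alternative'' has a genuine gap: the ``standard fact'' it relies on is false. A continuous geodesically convex function on a Hadamard manifold with finite, non-attained infimum need \emph{not} tend to that infimum along any geodesic ray from a prescribed point. Already on $\R^2$ take $f(x,y)=\max\{e^{-x},\,x^{2}-y\}$: it is convex, everywhere positive, and $f(k,k^{2})=e^{-k}\to 0$, so $\inf f=0$ is not attained; yet along any ray $t\mapsto(p+ta,q+tb)$ either $a\le 0$, whence $f\ge e^{-p}>0$, or $a>0$, whence $(p+ta)^{2}-(q+tb)\sim t^{2}a^{2}\to+\infty$. So $f$ approaches its infimum along no ray whatsoever. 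This failure is exactly why the existence of an optimal destabilizing direction is delicate, and why the paper's own proof of Theorem \ref{hmr} in Section 7 must route through the Lojasiewicz gradient inequality rather than soft convexity on $G/K$.
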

\begin{proof}
Assume $0\in \mathrm{Conv}(\fun_\liep(\overline{G\cdot x}))$ belongs to a face. By Theorems \ref{convo} and \ref{convex-orbit-general} there exists $z\in G\cdot x$ such that
$
\mathrm{Conv} (\fun_\liep (\overline{G\cdot x}))=K \fun_\lia (\overline{A\cdot z}).
$
By Lemma $1.4$ \cite{bgh-israel-p} $p.430$, $0$ belongs to a face of $\overline{A\cdot z}$. Applying the Hilbert Mumford criterion for Abelian groups, there exists $\nu \in \liep$ such that
\[
\lim_{t\mapsto +\infty} \exp(t\nu ) z=0.
\]
and so $0\in \overline{G\cdot x}$. Now, $z=gx$ and $g=hk$ where  $h\in G^{\nu-}$ and $k\in K$. Therefore, keeping in mind that the limit  $\lim_{t\mapsto+\infty } \exp(t\nu ) h \exp(-t \nu )$ exists, we have
\[
0=\lim_{t\mapsto +\infty} \exp(t\nu) hkx =\lim_{t\mapsto +\infty} \left(\exp(t\nu) h \exp(-t\nu) \right) \exp(t\nu) kx=\lim_{t\mapsto +\infty} \exp(t\nu) kx
\]
Since $\exp(t\nu) kx=k\exp(t\mathrm{Ad}(k^{-1}) (\nu) x$ it follows
\[
\lim_{t\mapsto +\infty} \exp ( \mathrm{Ad}(k^{-1}) (\nu)) x=0
\]
and so the result is proved.
\end{proof}
A conjecture might be that the vice-versa holds as well. This means that $0\in \overline{G\cdot x}$ if and only if $0$ belongs to a face of $\mathrm{Conv}(\fun_\liep (\overline{G\cdot x}))$. Roughly speaking the Hilbert-Mumford criterion for real reductive groups follows from the boundary structure, by means of the convex geometry,  of the convex hull of the image of the gradient map restricted on the closure of $G$ orbits. We briefly recall a proof of the Hilbert-Mumford criterion \cite{rs,fb,heinzner-schwarz-Cartan}.
The main point is the existence of $k\in K$ such that $0\in \overline{A\cdot k x}$ and so $0\in \overline{A' \cdot x}$ where $A'=k^{-1}Ak$. Now, if $\fun_{\lia'}(\overline{G\cdot x})=\fun_{\lia'}(\overline{A' \cdot x})$, where $\lia'$ is the Lie algebra of $A'$, then Theorem \ref{convex-orbit-general}  implies $\mathrm{Conv}(\fun_\liep (\overline{G\cdot x}))=K\fun_{\lia'} (\overline{A' \cdot x})$ and so, by Lemma $1.4$ \cite{bgh-israel-p}, $0$ is a face of $\mathrm{Conv} (\fun_{\liep} (\overline{G \cdot x}))$. However $\fun_{\lia'}(\overline{G\cdot x})$ does not coincide in general with $\fun_{\lia'}(\overline{A' \cdot x})$. Indeed,  $0$ does not belong in general to a face of $\mathrm{Conv}(\fun_\liep (\overline{G\cdot x}))$.
\begin{ese}
Let $\mathrm{SL}(2,\R)$ acting on $\R^2$ in a natural way. It has $2$ orbit: the origin and $\R^{2}\setminus\{0\}$. Let $\xi$ be any  non zero symmetric matrices with trace $0$. If we consider a orthonormal basis $\{e_1,e_2\}$ of eigenvectors of $\xi$, up to rescaling, we may assume $\xi= \left(\begin{array}{cc} 1 & 0 \\ 0 & -1 \end{array}\right)$. The gradient map with respect to $A=\exp (\R \xi)$ is given by
$
\fun_\lia (x,y)=x^2-y^2.
$
Then $\fun_\lia (\R^2)=\R$ and so $\fun_\liep (\R^2)=\liep$.
\end{ese}
Now, we come back to the proof of the Hilbert-Mumford criterion. We have proved that $0\in \overline{A'\cdot x}$. Applying Hilbert Mumford criterion for Abelian groups, there exists
$\xi \in \lia'$, where $\lia'$ is the Lie algebra of $A'$,  such that $\exp(t\xi) x \mapsto 0$. We claim that the orbit $G^{\xi-} \cdot x$ goes to $0$ under the flow $\exp(t\xi)$ when $t\mapsto +\infty$ . Indeed,
\[
\lim_{t\mapsto +\infty} \exp(t \xi) g x=\lim_{t\mapsto +\infty} \left( \exp (t\xi) g \exp(-t \xi ) \right) \exp (t \xi ) x=0.
\]
Therefore, the function $x \mapsto \fun_\liep (x)(\xi)$ restricted on $\overline{G^{\xi-} \cdot x}$  has a maximum achieved in $0$. Now, if we denote by $E=\mathrm{Conv}(\fun_{\liep} (\overline{G^{\xi-}\cdot x}))$, the above discussion proves that $0$ belongs to an exposed  face of $E$. Roughly speaking, it seems that we can provide some appropriate connection between the Hilbert-Mumford criterion and the convex hull of the image of the gradient map restricted on the closure of orbits of parabolic subgroups. We leave this
problem for future investigation.
\section{Real reductive representations on Projective spaces}\label{proiettivo}
Let $V$ be a  real finite dimensional vector space endowed by a scalar product $\scalo$. Let $G\subset \mathrm{GL}(V)$ a closed reductive subgroup satisfying $G=K\exp (\liep)$, where $K=G\cap \mathrm{O}(V)$ is a maximal compact subgroup of $G$ and  $\liep=\lieg\cap \mathrm{Sym}(V)$. We denote by
$\parallel \cdot \parallel=\sqrt{\scalo}$. The $G$ action on $V$ induces, in a natural way, an action on the projective space $\mathbb P(V)$ given by
\[
G \times \mathbb P (V) \lra \mathbb P (V) \qquad (g,[v])\mapsto [gv].
\]
\begin{lemma}
The function
\[
\tilde \Psi : G \times \mathbb P (V) \lra \R \qquad (g,[x]) \mapsto \log \left(\frac{\parallel g x \parallel}{\parallel x \parallel} \right),
\]
is a Kempf-Ness function and the corresponding gradient map $\tilde \fun_\liep :\mathbb P (V) \lra \liep^*$ is given by
\[
\tilde \fun_\liep ([x])(\xi)=\frac{\langle \xi x , x \rangle}{\parallel x \parallel^2}.
\]
\end{lemma}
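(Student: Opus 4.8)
The plan is to verify the four axioms ($P1$)--($P4$) of Definition \ref{def-kn} for the triple $(\mathbb P(V),G,K)$ and then to read the gradient map off from its definition; this is legitimate because $\mathbb P(V)$ is Hausdorff and $(g,[v])\mapsto[gv]$ is a continuous $G$-action, so the abstract framework of Section \ref{sec:abstract-setting} applies, and $\tilde\Psi$ is to be regarded as a function of the pair $([x],g)$. First I would note that $\tilde\Psi$ is well defined: $gx\neq 0$ whenever $g\in G\subset\mathrm{GL}(V)$ and $x\neq 0$, and $\parallel gx\parallel/\parallel x\parallel$ is unchanged if $x$ is rescaled. Writing $\tilde\Psi([x],g)=\frac{1}{2}\log\langle gx,gx\rangle-\frac{1}{2}\log\langle x,x\rangle$ makes ($P1$) transparent, since $g\mapsto\langle gx,gx\rangle$ is smooth and strictly positive on $G$. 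Axiom ($P2$) is just the fact that $K\subset\mathrm{O}(V)$ preserves the norm, so $\parallel kgx\parallel=\parallel gx\parallel$. Axiom ($P4$) follows by telescoping logarithms, using $g[x]=[gx]$ and that $\tilde\Psi$ does not depend on the chosen representative:
\[
\tilde\Psi([x],g)+\tilde\Psi([gx],h)=\log\frac{\parallel gx\parallel}{\parallel x\parallel}+\log\frac{\parallel hgx\parallel}{\parallel gx\parallel}=\log\frac{\parallel hgx\parallel}{\parallel x\parallel}=\tilde\Psi([x],hg).
\]

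The one axiom that requires an actual computation is ($P3$). Fixing $\xi\in\liep$, I would diagonalize: choose an orthonormal eigenbasis $v_1,\dots,v_n$ with $\xi v_i=\alpha_i v_i$, $\alpha_i\in\R$, and write $x=\sum_i x_i v_i$. Then $\exp(t\xi)x=\sum_i e^{t\alpha_i}x_i v_i$, so $\phi(t):=\parallel\exp(t\xi)x\parallel^2=\sum_i|x_i|^2 e^{2t\alpha_i}$ and $f(t):=\tilde\Psi([x],\exp(t\xi))=\frac{1}{2}\log\phi(t)-\frac{1}{2}\log\parallel x\parallel^2$. A short computation gives $f'(t)=\sum_i\alpha_i\,\pi_t(i)$ and $f''(t)=2\,\mathrm{Var}_{\pi_t}(\alpha)\geq 0$, where $\pi_t$ is the probability vector on $\{1,\dots,n\}$ with weights $\pi_t(i)=|x_i|^2 e^{2t\alpha_i}/\phi(t)$ and $\alpha$ denotes the function $i\mapsto\alpha_i$. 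Hence $f''(0)=0$ precisely when $\alpha_i$ is constant, say equal to $c$, on $\mathrm{supp}_x=\{i:x_i\neq 0\}$; in that case $\xi x=cx$, so $\exp(t\xi)x=e^{tc}x$ and $[\exp(t\xi)x]=[x]$ for every $t$, i.e. $\exp(\R\xi)\subset G_{[x]}$. Conversely, if $\exp(\R\xi)\subset G_{[x]}$ then $\exp(t\xi)x=\lambda(t)x$ for some scalar function $\lambda$, and differentiating at $t=0$ gives $\xi x=\lambda'(0)x$, which forces $\alpha_i$ constant on $\mathrm{supp}_x$ and hence $f''(0)=0$. This is exactly condition ($P3$).

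Finally, the gradient map is obtained by differentiating at $t=0$:
\[
\tilde\fun_\liep([x])(\xi)=\desudtzero\tilde\Psi([x],\exp(t\xi))=f'(0)=\sum_i\alpha_i\,\pi_0(i)=\frac{\langle\xi x,x\rangle}{\parallel x\parallel^2},
\]
where the last equality uses $\pi_0(i)=|x_i|^2/\parallel x\parallel^2$ and $\langle\xi x,x\rangle=\sum_i\alpha_i|x_i|^2$; this is the asserted formula. Apart from the equality statement in ($P3$), every step is a one-line manipulation, so I expect the only delicate point to be precisely that equality case — matching the vanishing of the variance of the eigenvalues of $\xi$ along $\mathrm{supp}_x$ with the requirement that $\exp(\R\xi)$ fix the line $[x]$ — and the eigenbasis decomposition above disposes of it.
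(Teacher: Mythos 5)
Your proposal is correct and follows essentially the same route as the paper: it checks ($P1$), ($P2$) and the cocycle condition ($P4$) directly, and reduces both ($P3$) and the gradient formula to computing $\frac{\mathrm{d}}{\mathrm{dt}}$ and $\frac{\mathrm{d^2}}{\mathrm{dt^2}}$ of $t\mapsto\log\parallel\exp(t\xi)x\parallel$. The only cosmetic difference is in ($P3$): you diagonalize $\xi$ and identify the second derivative with twice a variance, while the paper keeps the inner-product expression and invokes the Cauchy--Schwarz inequality (equality iff $\xi x$ and $x$ are proportional); the two arguments are equivalent.
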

\begin{proof}
$P(1)$ and $P(2)$ are easy to check. Let $\xi \in \liep$ and let $f(t)=\tilde \Psi(x,\exp(t\xi))$. Then
\begin{gather*}
f'(t)=\frac{\langle \exp(t \xi ) \xi x, \exp(t\xi) x \rangle}{\langle \exp (t \xi) x , \exp (t \xi) x \rangle}\\
f''(t)= 2\frac{\langle \exp(t \xi ) \xi x, \exp(t\xi) \xi x \rangle^2 \langle \exp(t\xi)x, \exp (t\xi) x \rangle^2-\langle \exp (t\xi)\xi x, \exp(t\xi) x \rangle^2}{\langle \exp(t\xi) x , \exp (t \xi) x \rangle^2}.
\end{gather*}
By the Cauchy-Schwartz's inequality we have $f''(t) \geq 0$. Moreover, $f''(0)=0$ if and only if $\xi x$ and $x$ are linearly dependent and so if and only if $\exp(\R \xi ) \subset G_{[x]}$. Now,
\[
\begin{split}
\Psi(x,hg)&=\log \left(\frac{\parallel hg x \parallel}{\parallel x \parallel}\right) \\
          &=\log \left( \frac{\parallel hg x \parallel}{\parallel gx \parallel} \right) + \log \left( \frac{\parallel g x \parallel}{\parallel x \parallel} \right) \\
          &=\tilde \Psi(gx,h)+\tilde \Psi(x,g),
\end{split}
\]
proving the cocycle condition. Finally
\[
\begin{split}
\tilde \fun_\liep (x)(\xi) &=\desudtzero  \log \left(\frac{ \parallel \exp (t\xi) x ,\exp(t\xi) x \parallel}{\parallel x \parallel} \right)\\
                           &=\frac{\langle \xi x , x \rangle}{\parallel x \parallel^2}.
\end{split}
\]
concluding the proof.

\end{proof}
Let $\lia\subset \liep$ be an Abelian subalgebra. It is easy to check that $\tilde \fun_\lia=\pi_\lia \circ \tilde \fun_\liep$ is the gradient map associated to $A$. We denote by $\mathbb P(V)^A=\{y\in \mathbb P(V):\, A\cdot y=y\}$, the fixed points set.
\begin{teo}\label{convexity-projective-abelian}
Let $x\in \mathbb P(V)$. The map $\tilde \fun_\lia :\overline{A\cdot x} \lra \lia^*$ satisfies the following properties;
\begin{enumerate}
\item $\tilde \fun_{\lia}(A\cdot x)$ is diffeomorphic to an open convex subset of $\tilde \fun_{\lia}(x)+\lia_x^\perp$, its closure coincides with
  $\tilde \fun_\lia (\overline{A\cdot x})$, it is a polytope $P$ and it is the convex
hull of $\tilde \fun_\lia (\mathbb P(V)^A\cap \overline{A\cdot x})$.
\item $\fun_\lia :\overline{A\cdot x} \lra P$ is an homomorphism. Moreover,  any face of $P$ is homeomorphic to a closure of a unique $A$ orbit contained in $\overline{A\cdot x}$.
\end{enumerate}
\end{teo}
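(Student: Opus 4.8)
The plan is to imitate the proof of Theorem~\ref{orbita}, with the cone $C_I$ replaced by a polytope. Diagonalising $\lia$ as there, write $x=\sum_{i=1}^n x_iv_i$, put $I=\supp_x$ and $P=\mathrm{Conv}\{\alpha_i:\,i\in I\}$; from $\tilde\fun_\lia([y])=\bigl(\sum_{i\in\supp_y}\|y_i\|^2\alpha_i\bigr)/\|y\|^2$ one reads off that $\tilde\fun_\lia(\overline{A\cdot x})\subseteq P$ (every point of $\overline{A\cdot x}$ has support inside $I$) and that the affine hull of $P$ is $\tilde\fun_\lia(x)+\lia_x^{o}$. For item~(a) I would first apply Proposition~$2.1$ of~\cite{bilio-ghigipr} to the Abelian group $A$ acting on $\mathbb P(V)$, obtaining that $\tilde\fun_\lia\colon A\cdot x\to\lia^*$ is a diffeomorphism onto a convex open subset of $\tilde\fun_\lia(x)+\lia_x^{o}$, necessarily contained in $\relint P$. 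The substance is to show it is all of $\relint P$: given $c=\sum_{i\in I}c_i\alpha_i\in\relint P$ (so $c_i>0$, $\sum_i c_i=1$), I would look for a critical point of $f(\xi)=\tilde\Psi([x],\exp\xi)-c(\xi)$ on a complement $\mathfrak b$ of $\lia_x$ in $\lia$, exactly as for Theorem~\ref{orbita}. Here $f$ is strictly convex on $\mathfrak b$ by~$(P3)$, which for $\tilde\Psi$ is the Cauchy--Schwarz inequality used in the preceding lemma; a short computation gives $\lambda([x],\xi):=\lim_{t\to+\infty}\frac{d}{dt}\tilde\Psi([x],\exp(t\xi))=\max_{i\in I}\alpha_i(\xi)$, so that $\lambda_f(\xi)=\max_{i\in I}\alpha_i(\xi)-\sum_{i\in I}c_i\alpha_i(\xi)>0$ for every $\xi\in\mathfrak b\setminus\{0\}$, the $c_i$ being positive with sum $1$ and the $\alpha_i(\xi)$ not all equal; Lemma~\ref{exaustion} then yields the critical point. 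Compactness of $\overline{A\cdot x}$ together with $\relint P\subseteq\tilde\fun_\lia(\overline{A\cdot x})\subseteq P$ shows that the closure of $\tilde\fun_\lia(A\cdot x)$ equals $\tilde\fun_\lia(\overline{A\cdot x})=P$, a polytope; and since each vertex $\alpha_{i_0}$ of $P$ equals $\tilde\fun_\lia\bigl(\lim_{t\to+\infty}\exp(t\xi)[x]\bigr)$ for a suitable $\xi\in\lia$, with this limit an $A$-fixed point of $\overline{A\cdot x}$, one gets $P=\mathrm{Conv}\,\tilde\fun_\lia\bigl(\mathbb P(V)^A\cap\overline{A\cdot x}\bigr)$, completing~(a).

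For item~(b) I would argue by induction on $|\supp_x|$, establishing at the same time that $\overline{A\cdot x}$ is the disjoint union of $A$-orbits, one for each face of $P$ (the base case $|\supp_x|=1$ being trivial). Let $\sigma$ be a face of $P$; since every face of a polytope is exposed, $\sigma=F_\xi(P)=\mathrm{Conv}\{\alpha_i:\,i\in J\}$ with $J=\{i\in I:\,\alpha_i(\xi)=h_P(\xi)\}$, and I set $\theta:=\lim_{t\to+\infty}\exp(t\xi)[x]=\bigl[\sum_{i\in J}x_iv_i\bigr]\in\overline{A\cdot x}$, so $\supp_\theta=J$ and, by~(a) applied to $\theta$, $\tilde\fun_\lia(\theta)\in\relint\sigma$. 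The heart of the matter is the identity $\tilde\fun_\lia^{-1}(\sigma)=\overline{A\cdot\theta}$. The inclusion $\overline{A\cdot\theta}\subseteq\tilde\fun_\lia^{-1}(\sigma)$ is immediate, since points of $\overline{A\cdot\theta}$ have support in $J$. For the reverse inclusion, take $[u]\in\tilde\fun_\lia^{-1}(\sigma)$: the function $t\mapsto\tilde\fun_\lia(\exp(t\xi)[u])(\xi)=\frac{d}{dt}\tilde\Psi([u],\exp(t\xi))$ is nondecreasing, bounded above by $\lambda([u],\xi)=\max_{i\in\supp_u}\alpha_i(\xi)\le h_P(\xi)$, and equals $h_P(\xi)$ at $t=0$; hence it is constant for $t\ge0$, its derivative at $0$ vanishes, so by~$(P3)$ the flow $\exp(t\xi)$ fixes $[u]$, which forces $\supp_u\subseteq J$. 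Writing now $a_n[x]\to[u]$ with $a_n\in A$ and passing to a subsequence so that $a_nx/\|a_nx\|\to u'$, a unit representative of $[u]$ lying in $W_J=\spam\{v_i:\,i\in J\}$, and using that the orthogonal projection $\pi_{W_J}$ commutes with $A$, one obtains $a_n\bigl(\sum_{i\in J}x_iv_i\bigr)/\|a_nx\|=\pi_{W_J}(a_nx/\|a_nx\|)\to u'\neq0$, so $a_n[\theta]\to[u]$ and $[u]\in\overline{A\cdot\theta}$, proving the identity.

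When $\sigma$ is a proper face of $P$ one has $|\supp_\theta|=|J|<|I|$, so the inductive hypothesis applies to $\overline{A\cdot\theta}$: it is the disjoint union of orbits $A\cdot\theta_{\sigma'}$ over the faces $\sigma'$ of $\sigma$ --- which by Proposition~\ref{face-exposed} are precisely the faces of $P$ contained in $\sigma$ --- with $\tilde\fun_\lia$ restricting to a diffeomorphism $A\cdot\theta_{\sigma'}\to\relint\sigma'$. Consequently, for any $p\in P$, with $\sigma_p$ the unique face of $P$ such that $p\in\relint\sigma_p$, one has $\tilde\fun_\lia^{-1}(p)\subseteq\tilde\fun_\lia^{-1}(\sigma_p)=\overline{A\cdot\theta_{\sigma_p}}$, and inside this set only the orbit $A\cdot\theta_{\sigma_p}$ meets $\relint\sigma_p$, on which $\tilde\fun_\lia$ is injective; hence $\tilde\fun_\lia^{-1}(p)$ is a single point. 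Therefore $\tilde\fun_\lia\colon\overline{A\cdot x}\to P$ is a continuous bijection from a compact space onto a Hausdorff one, hence a homeomorphism, it restricts to a homeomorphism $\overline{A\cdot\theta_\sigma}\to\sigma$ for every face $\sigma$, and $\overline{A\cdot x}=\bigsqcup_{\sigma}A\cdot\theta_\sigma$, which closes the induction. I expect the main obstacle, compared with the linear case of Theorem~\ref{orbita} (where one simply takes $\lim_{t}\exp(t\xi)(a_nx)$ inside $V$ and uses the orthogonal projection onto $\ker\xi$), to be precisely the inclusion $\tilde\fun_\lia^{-1}(\sigma)\subseteq\overline{A\cdot\theta}$: in $\mathbb P(V)$ the vectors $a_nx$ need not converge, so one must normalise them and work in the subspace $W_J$, and it is the need to control \emph{all} orbits in $\overline{A\cdot x}$ simultaneously that forces the induction.
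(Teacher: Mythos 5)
Your proof is correct, but in the decisive step it takes a genuinely different (and more elementary) route than the paper. The paper disposes of item (a) by citation and proves item (b) by Morse--Bott machinery: it invokes the non-degeneracy of $\tilde\fun_\lia^\xi$, the Linearization Theorem of Heinzner--Schwarz--St\"otzel to describe the unstable manifolds $W_j^\xi=\mathbb P(V_j\oplus\cdots\oplus V_k)\setminus\mathbb P(V_{j+1}\oplus\cdots\oplus V_k)$, and the continuity of the limit map $\phi_\infty$ on each stratum to pass from $a_n\cdot x\to\theta$ to $a_n\cdot x_\infty\to\theta$. You replace all of this by the projective analogue of the paper's own argument for Theorem \ref{orbita}: after showing via the maximal weight that $\supp_u\subseteq J$, you normalize the representatives $a_n\tilde x/\|a_n\tilde x\|$ and use that the orthogonal projection onto $W_J=\operatorname{span}\{v_i:\,i\in J\}$ commutes with $A$, which yields $a_n\cdot\theta\to [u]$ directly. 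This buys self-containedness (no Linearization Theorem, no gradient-flow retraction) at the cost of being tied to the explicit eigenbasis; the paper's route is heavier but is the one that generalizes beyond linear representations. Two further points where you actually supply more than the paper does: you give a complete proof of surjectivity onto $\relint P$ in item (a) via the exhaustion Lemma \ref{exaustion} and the computation $\lambda([x],\xi)=\max_{i\in I}\alpha_i(\xi)$ (the paper only cites \cite{bilio-ghigipr}), and your induction on $|\supp_x|$ closes a gap the paper leaves implicit, namely why a point of $\overline{A\cdot\theta_\sigma}\setminus A\cdot\theta_\sigma$ cannot map into $\relint\sigma$ --- without the full orbit--face decomposition of $\overline{A\cdot\theta_\sigma}$, injectivity of $\tilde\fun_\lia$ on fibers over $\relint\sigma$ does not follow from item (a) alone. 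All intermediate claims I checked (that the constancy of $t\mapsto\tilde\fun_\lia(\exp(t\xi)[u])(\xi)$ forces $\supp_u\subseteq J$ because the value at $t=0$ is already $h_P(\xi)$; that $\relint P$ is the set of strictly positive convex combinations of the $\alpha_i$, $i\in I$) are sound.
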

\begin{proof}
The first statement is well-known. A proof using the Kempf-Ness function is given in  \cite{bilio-ghigipr}. Item $(b)$ is known in the complex setting \cite{atiyah-commuting}.

Let $\sigma \subset P$ be a face of $P$. It is
  an exposed face, so there exists $\xi\in \lia$ such that
  \begin{gather*}
    \sigma=\{z\in P: \langle z,\xi \rangle = \max_{y\in \fun_\lia (\overline{A\cdot x})}
    \langle y,\xi \rangle \}.
  \end{gather*}
    Since for any $y\in \mathbb P(V)$,  the function $t\mapsto \fun_\lia (\exp(t\xi) y) (\xi)$ is monotone, it follows that $\fun_\lia^{-1} (\sigma) \subset \mathbb P(V)^{\Gamma}$, where $\Gamma=\exp(\R \xi)$ and so $\fun_\lia^{-1} (\sigma)$ is $A$-invariant.  In the sequel we denote by $\fun_\lia^\xi$ the function $x\mapsto \fun_\lia (x)(\xi)$ which is non-degenerate in the sense of  Morse-Bott \cite{atiyah-commuting,guillemin-sternberg-convexity-1,heinzner-schwarz-Cartan,heinzner-schwarz-stoetzel}, see also \cite{biliotti-ghigi-heinzner-2} p. $589$ for a proof.  Applying the Linearization Theorem \cite{heinzner-schwarz-Cartan,heinzner-schwarz-stoetzel}, for any $x\in \mathbb P(V)$ the limit $\lim_{t\mapsto +\infty} \exp (t \xi ) x$ exists and the set of all points of $\mathbb P(V)$ for which $\lim_{t\mapsto +\infty} \exp (t \xi )\cdot $ lies on a given critical manifold $C$ forms a submanifold and they are called unstable manifolds. We denote by $\phi_\infty$ the limit map. Let  $\xi\in \liep$ and let $\lambda_1>\dots> \lambda_k$ be its eigenvalues.  We denote by $V_1,\dots, V_k$ the corresponding eigenspaces.  In view of the orthogonal decompositions $V=V_1\oplus\dots\oplus V_{k}$, the
critical points of $\fun_\lia^\xi$ are given by $\mathds{P}(V_1)\cup\dots\cup \mathds{P}(V_{k})$ and the corresponding unstable manifolds are given by:
$$
W_1^\xi=\mathds{P}^{n}(\mathds{R})\setminus \mathds{P}(V_2\oplus\dots\oplus V_k),
$$
$$
W_2^\xi=\mathds{P}(V_2\oplus\dots\oplus V_k)\setminus \mathds{P}(V_3\oplus\dots\oplus V_k),
$$
$$
\vdots
$$
$$
W_{k-1}^\xi=\mathds{P}(V_{k-1}\oplus V_k)\setminus \mathds{P}(V_k).
$$
$$
W_k^\xi=\mathds{P}(V_k).
$$
(see \cite{bilio-zedda}).
Assume that $x\in W_j^\xi$ for some $1\leq j\leq k$. Then  $x_\infty=\lim_{t\mapsto +\infty} \exp(t\xi) x \in \mathbb P(V_j)$ and $\overline{A\cdot x}\subset \mathbb P(  (V_j\oplus \cdots \oplus V_k)$. Since $\fun_\lia^{\xi}$ restricted to
$\mathbb P(  V_j\oplus \cdots \oplus V_k)$ has $\mathbb P(V_j)$ as a unique maximum it follows $x_\infty \in \fun_\lia^{-1} (\sigma)$. Therefore
$
\overline{A \cdot x_\infty} \subset \fun_\lia^{-1}(\sigma).
$
Let $a_n$ be a sequence of elements of $A$ such
that $a_n \cdot x \mapsto \theta \in \fun_\lia^{-1}(\sigma)$. Since
\[
\phi_\infty : W_k^\xi \lra \mathbb P(V_j) \qquad y \mapsto \lim_{t\mapsto +\infty} \exp (t \xi ) y
\]
is smooth, it follows
\begin{equation*}
  \theta=\lim_{n\mapsto \infty } \phi_\infty (a_n \cdot x)
  =\lim_{n\mapsto \infty} a_n\cdot x_\infty,
\end{equation*}
and so it lies in $\overline{A\cdot x_\infty}$. Therefore
\[
\fun_\lia^{-1}(\sigma)=\overline{A\cdot x_\infty}.
\]
Now, again applying item $(1)$, the map $\fun_\lia : A \cdot x_\infty \lra \mathrm{relint} \sigma$ is a diffeomorphism, proving item $(b)$.
\end{proof}
The following result is a direct consequence of the above Theorem.
\begin{cor}
Let $x\in \mathbb P(V)$. Then
\begin{enumerate}
\item $\overline{A\cdot x} \setminus A\cdot x$ is a finite union of $A$ orbits.
\item $u\in \overline{A\cdot x}$ if and only if there exists $\xi \in \lia$ and $a\in A$ such that $\lim_{t\mapsto +\infty } \exp (t\xi) a x=u$.
\end{enumerate}
\end{cor}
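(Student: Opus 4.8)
The plan is to read off both assertions directly from Theorem \ref{convexity-projective-abelian}, which already carries out all the analytic work. Recall from that theorem that $\tilde \fun_\lia$ restricts to a homeomorphism of $\overline{A\cdot x}$ onto a polytope $P=\tilde \fun_\lia(\overline{A\cdot x})$, that $\tilde \fun_\lia(A\cdot x)$ is precisely the relative interior $\relint P$, and that for every face $\sigma$ of $P$ one has $\tilde \fun_\lia^{-1}(\sigma)=\overline{A\cdot v_\sigma}$ for a unique $A$-orbit, with $\tilde \fun_\lia$ restricting to a diffeomorphism $A\cdot v_\sigma\to\relint\sigma$. Moreover, if $\sigma=F_\xi(P)$ is the exposed face cut out by some $\xi\in\lia$, then (from the proof of that theorem) the distinguished representative $v_\sigma$ may be taken to be the flow limit $x_\infty:=\lim_{t\to+\infty}\exp(t\xi)x$.

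For item (a): since $\tilde \fun_\lia$ is a homeomorphism carrying $A\cdot x$ onto $\relint P$, it carries $\overline{A\cdot x}\setminus A\cdot x$ onto the relative boundary $P\setminus\relint P$. The relative boundary of a polytope is the disjoint union of the relative interiors of its proper faces, and a polytope has only finitely many faces (all exposed, by the results of Section \ref{convex}). Pulling this partition back by $\tilde \fun_\lia$ and using that each $\relint\sigma$ is the homeomorphic image of a single orbit $A\cdot v_\sigma$, we get $\overline{A\cdot x}\setminus A\cdot x=\bigsqcup_{\sigma} A\cdot v_\sigma$, the union ranging over the proper faces $\sigma$ of $P$; this is a finite disjoint union of $A$-orbits.

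For item (b): the implication ``$\Leftarrow$'' is immediate, since for $\xi\in\lia$ and $a\in A$ the whole curve $t\mapsto\exp(t\xi)ax$ lies in $A\cdot x$, hence its limit lies in the closed set $\overline{A\cdot x}$. For ``$\Rightarrow$'', let $u\in\overline{A\cdot x}$ and let $\sigma$ be the unique face of $P$ whose relative interior contains $\tilde \fun_\lia(u)$. Write $\sigma=F_\xi(P)$ with $\xi\in\lia$ (possible since all faces of a polytope are exposed) and set $x_\infty=\lim_{t\to+\infty}\exp(t\xi)x$, which exists by the Linearization Theorem invoked in the proof of Theorem \ref{convexity-projective-abelian}. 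Since $\tilde \fun_\lia^{-1}(\sigma)=\overline{A\cdot x_\infty}$ and $\tilde \fun_\lia$ maps $A\cdot x_\infty$ bijectively onto $\relint\sigma\ni\tilde \fun_\lia(u)$, we get $u\in A\cdot x_\infty$, say $u=a x_\infty$ with $a\in A$. As $\lia$ is Abelian, $a$ commutes with $\exp(t\xi)$, so
\[
u=a\lim_{t\to+\infty}\exp(t\xi)x=\lim_{t\to+\infty}\exp(t\xi)(ax),
\]
which is the desired conclusion.

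The only point requiring care — and it is one already settled inside the proof of Theorem \ref{convexity-projective-abelian} — is the bookkeeping matching faces to orbits: one must use that the distinguished representative $v_\sigma$ of the orbit assigned to the exposed face $\sigma=F_\xi(P)$ is literally the flow limit $x_\infty$, so that membership of $u$ in that orbit produces the required $a$ and $\xi$ simultaneously. Everything else reduces to the standard face decomposition of a polytope together with the commutativity of $A$.
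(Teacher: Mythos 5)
Your proposal is correct and is essentially the paper's intended argument: the paper states this corollary as ``a direct consequence'' of Theorem \ref{convexity-projective-abelian}, and you have simply spelled out that deduction, using the face decomposition of the polytope $P$ and the identification of the orbit over the exposed face $F_\xi(P)$ with the $A$-orbit of the flow limit $x_\infty=\lim_{t\to+\infty}\exp(t\xi)x$. The only point worth noting is the trivial case $u\in A\cdot x$ itself, handled by taking $\xi=0$.
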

Note that the second item provides a Hilbert-Mumford criterion for Abelian groups acting on $\mathbb P(V)$. Now we completely describe the image of the gradient map along $A$ orbits. In the sequel we denote by $\pi: V\setminus\{0\} \lra \mathbb P(V)$ the natural projection. Since $A$ is Abelian, there exists an orthonormal basis $\{v_1,\ldots, v_n\}$ of $V$ such that diagonalize simultaneously any element of $A$. Therefore
\[
\tilde \fun_\lia ([x_1 v_1 + \cdots + v_n e_n])(\xi)=\frac{x_1^2 \alpha_1 (\xi) +\cdots + x_n^2 \alpha (\xi) }{x_1^2 + \cdots + x_n^2}
\]
If $I\subset \{1,\ldots,n\}$, we denote by $P_I=\mathrm{Conv}(\alpha_k:\, k \in I)$ the  polytope generated by $\alpha_k$, as $k$ runs in $I$.
\begin{teo}\label{image}
Let $x\in \mathbb P(V)$ and let $\tilde x \in V$ such that $\pi(\tilde x)=x$. Let $I=\mathrm{supp}_{\tilde x}$. Then
\begin{enumerate}
\item $\tilde \fun_\lia (\overline{A\cdot x})=P_I$, where $P_I=\mathrm{Conv}(\alpha_i:\, i\in I)$.
\item $A\cdot x$ is closed if and only if $x\in \mathbb P(V)^A$;
\end{enumerate}
\end{teo}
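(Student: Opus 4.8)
The plan is to deduce both assertions from Theorem~\ref{convexity-projective-abelian}, whose structural conclusions already carry most of the weight, by adding only the explicit eigenvalue formula for $\tilde\fun_\lia$ and a description of the $A$-fixed points contained in $\overline{A\cdot x}$. Write $\tilde x=\sum_{i\in I}x_iv_i$ with all $x_i\neq 0$. First I would record the easy inclusion $\tilde\fun_\lia(\overline{A\cdot x})\subseteq P_I$: since $\tilde\fun_\lia(x)=\frac{\sum_{i\in I}x_i^2\alpha_i}{\sum_{i\in I}x_i^2}$ is a convex combination of the $\alpha_i$, $i\in I$, it lies in $P_I$, and because each $a\in A$ rescales the $i$-th coordinate by a positive number, every point of $A\cdot x$ still has support $I$; thus $\tilde\fun_\lia(A\cdot x)\subseteq P_I$, and $P_I$ being compact this passes to the closure.

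For the reverse inclusion I would invoke Theorem~\ref{convexity-projective-abelian}(a), which identifies $\tilde\fun_\lia(\overline{A\cdot x})$ with the convex hull of $\tilde\fun_\lia(\mathbb P(V)^A\cap\overline{A\cdot x})$; it then suffices to realize every vertex of $P_I$ as the image of an $A$-fixed point of $\overline{A\cdot x}$. Two observations are needed. First, any $[w]\in\overline{A\cdot x}$ has $\supp_w\subseteq I$: unit-norm representatives of elements of $A\cdot x$ are supported on $I$ and that set of vectors is closed; moreover if $[w]\in\mathbb P(V)^A$ then $\exp(\xi)w\in\R w$ for every $\xi\in\lia$ forces the functionals $\alpha_j$, $j\in\supp_w$, to coincide, whence $\tilde\fun_\lia([w])$ equals that common value, an element of $\{\alpha_i:i\in I\}$. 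Second, given a vertex $\alpha_{i_0}$ of the polytope $P_I$, it is an exposed point, so there is $\xi\in\lia$ with $\alpha_{i_0}(\xi)>\alpha_j(\xi)$ for every $j\in I$ with $\alpha_j\neq\alpha_{i_0}$; then $\lim_{t\mapsto +\infty}\exp(t\xi)\cdot x=[\theta]$ with $\theta=\sum_{j\in I,\ \alpha_j=\alpha_{i_0}}x_jv_j$, and $[\theta]\in\overline{A\cdot x}$ is fixed by $A$ with $\tilde\fun_\lia([\theta])=\alpha_{i_0}$. Taking convex hulls gives $P_I\subseteq\tilde\fun_\lia(\overline{A\cdot x})$, proving~(a).

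For~(b): if $x\in\mathbb P(V)^A$ then $A\cdot x=\{x\}$ is closed. Conversely, if $A\cdot x$ is closed, i.e. $A\cdot x=\overline{A\cdot x}$, then by the last part of Theorem~\ref{convexity-projective-abelian} the map $\tilde\fun_\lia$ restricts to a diffeomorphism $A\cdot x\lra\relint P_I$ and to a homeomorphism $\overline{A\cdot x}\lra P_I$, so $A\cdot x=\overline{A\cdot x}$ forces $\relint P_I=P_I$, which for a polytope holds only when $P_I$ is a single point; but $P_I=\mathrm{Conv}(\alpha_i:i\in I)$ is a point exactly when all $\alpha_i$, $i\in I$, coincide, i.e. when $\tilde x$ is a common eigenvector of $A$, i.e. $x\in\mathbb P(V)^A$. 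The only genuinely delicate point is the middle of part~(a): one must describe $\mathbb P(V)^A\cap\overline{A\cdot x}$ correctly and, when several $\alpha_i$ coincide, recover the vertices of $P_I$ not as the individual points $[v_i]$ but as limits $[\theta]$ of the gradient flow along a separating direction; the rest is bookkeeping on top of Theorem~\ref{convexity-projective-abelian}. As an alternative one could bypass Theorem~\ref{convexity-projective-abelian}(a) and rerun the strict convexity and exhaustion argument of Theorem~\ref{orbita}(a) directly for $\tilde\fun_\lia$ to get $\tilde\fun_\lia(A\cdot x)=\relint P_I$, but reusing the already established structure theorem is shorter.
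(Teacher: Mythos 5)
Your proposal is correct and follows essentially the same route as the paper: the easy inclusion $\tilde \fun_\lia (\overline{A\cdot x})\subseteq P_I$, then realizing each vertex $\alpha_{i_0}$ of $P_I$ in the image by flowing along a direction $\xi$ that exposes it (the paper computes $\lim_{t\mapsto+\infty}\tilde\fun_\lia(\exp(t\xi)x)$ directly, you first compute the limit point $[\theta]$ and note it is $A$-fixed, which is the same computation), and concluding via the convexity statement of Theorem \ref{convexity-projective-abelian}; part (b) is likewise reduced, as in the paper, to the observation that the orbit's image is the relative interior of $P_I$, which equals $P_I$ only when $P_I$ is a point.
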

\begin{proof}
By definition of $\tilde \fun_\lia$ it follows $\tilde \fun_{\lia} (\overline{A\cdot x})\subset P_I$.  It is well known that $P_I$ is the convex hull of its extremal points \cite{schneider-convex-bodies}. Since $P_I$ is generated by $\alpha_k$, as $k$ runs on $I$, the extremal points of $P_I$ are contained in $\{\alpha_j : j\in I\}$. Assume that $\alpha_s$ is an extremal point of $P_I$ for some $s\in I$. Let $J=\{r\in I$ such that $\alpha_r=\alpha_j\}$. Since any face of $P_I$ is exposed, $\{\alpha_s \}$ is an exposed face and so  there exists $\xi \in \lia$ and $c>0$ such that $\langle \xi, \alpha_i \rangle <c$ for $i\notin J$ and $\langle \xi, \alpha_s \rangle =c$ for $s\in J$. Therefore
\[
\begin{split}
\tilde \fun_\lia (\exp(t\xi) x)&= \sum_{i \in I} \frac{x_i^2 e^{2t \langle \alpha_i , \xi \rangle}\alpha_i }{ x_i^2 e^{2t \langle \alpha_i , \xi \rangle}} \\
                                &=\frac{ \sum_{m \in J} x_m^2 \alpha_s + \sum_{i\in I\setminus\{J\}} x_i^2 e^{2t(\langle \alpha_i ,\xi \rangle -c)}}{\sum_{m\in J} x_m^2 + \sum_{i\in I\setminus\{J\}} x_i^2  e^{2t(\langle \alpha_i ,\xi \rangle -c)}}.
\end{split}
\]
Taking the limit $t\mapsto +\infty$, it follows $\alpha_s \in \tilde \fun_\lia (\overline{A\cdot x})$. This holds for any extremal point of $P_I$ and so $\tilde \fun_\lia (\overline{A\cdot x})=P_I$ proving the first item.

Since  $\tilde \fun_\lia (\overline{A\cdot x})=\overline{\tilde \fun_\lia (A\cdot x)}$, $A\cdot x$ is closed if and only if $\tilde \fun_\lia (A\cdot x)$ is closed. Since the image of any $A$-orbit is the relative interior of a polytope, it follows that $A\cdot x$ is closed if and only if $A\cdot x=x$.
\end{proof}
We now compute the image of the closure of a $G$ orbit under the gradient map.
\begin{teo}\label{convex-orbit-general-projective}
Let $x\in \mathbb P (V)$ and let $\tilde x \in V$ such that $\pi(\tilde x)=x$. There exists $v\in G\cdot \tilde x$ such that $\tilde \fun_{\lia}(\overline{G\cdot x})=\fun_{\lia}(\overline{A\cdot \pi(v)} )=P_I$, where $I=\mathrm{supp}_{v}$, and so  it is a polytope. Moreover, the set $\mathrm{Conv}(\tilde \fun_\liep (\overline{G\cdot x}))=KP_I$ and so any face of  $\mathrm{Conv}(\tilde \fun_\liep (\mathrm{G\cdot x}))$  is exposed.
\end{teo}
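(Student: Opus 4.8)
The plan is to mimic the argument of Theorem \ref{convex-orbit-general}, transplanting it from the linear action on $V$ to the projective action on $\mathbb P(V)$. First I would prove that there exists $v\in G\cdot\tilde x$ such that $\tilde\fun_\lia(\overline{G\cdot x})=\tilde\fun_\lia(\overline{A\cdot\pi(v)})$. The key observation is that the projective action of $G$ on $\mathbb P(V)$ is closely linked to the linear action of the cone over $G\cdot x$: if $\tilde x\in V$ lifts $x$, then $\pi(\overline{G\cdot\tilde x})\supset\overline{G\cdot x}$ and, since $\tilde\fun_\lia([w])$ depends only on the line through $w$, the computation $\tilde\fun_\lia(\overline{G\cdot x})=\tilde\fun_\lia(\pi(\overline{G\cdot\tilde x}))$ reduces the problem to understanding $\mathrm{supp}$ of a generic point of $\overline{G\cdot\tilde x}$. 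In the algebraic case ($G$ algebraic reductive, $\rho$ rational) I would set $M=\overline{G^\C\cdot\tilde x\cap V}$, which is a closed irreducible real algebraic $G$-invariant subset, and run the argument of Proposition \ref{convexity-irruducible}: cover $M$ by finitely many Zariski-open sets $U_{v_j}=\{u\in M:\, P_{\mathrm{supp}_{v_j}}\subset P_{\mathrm{supp}_u}\}$ (here the relevant poset is inclusion of the polytopes $P_I$ rather than the cones $C_I$), use irreducibility to get a common point $z$ in their intersection lying on $G\cdot\tilde x$, and conclude $\tilde\fun_\lia(\overline{G\cdot x})=\tilde\fun_\lia(\overline{A\cdot\pi(z)})=P_{\mathrm{supp}_z}$ by Theorem \ref{image}(a). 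For the non-algebraic case I would repeat the Baire-category / analyticity argument from the second proof of Theorem \ref{convex-orbit-general}: if no lift of $\overline{G\cdot x}$ is "full" in $V$ one cuts down to a $G$-invariant subspace $V_k=\{v:\langle v,v_k\rangle=0\}$ using that $\overline{G\cdot x}$ (being the image under $\pi$ of the analytic set $\overline{G\cdot\tilde x}$, intersected appropriately) cannot have interior in $V_k$ unless it is contained in $V_k$; after finitely many steps one lands in a subspace $W$ in which $\overline{G\cdot\tilde x}$ is full, and then by the openness-density statement following Theorem \ref{image} one may pick $v\in G\cdot\tilde x$ with $\tilde\fun_\lia(\overline{G\cdot x})=\tilde\fun_\lia(\overline{A\cdot\pi(v)})=\tilde\fun_\lia(\mathbb P(W))=P_I$, $I=\mathrm{supp}_v$.

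Once the first claim is established, $C:=\tilde\fun_\lia(\overline{G\cdot x})=P_I$ is a polytope, hence in particular a closed convex set all of whose faces are exposed (Theorems in Section \ref{convex}). The remaining assertion $\mathrm{Conv}(\tilde\fun_\liep(\overline{G\cdot x}))=KP_I$ I would obtain exactly as in Theorem \ref{convo}: since $\tilde\fun_\lia=\pi_\lia\circ\tilde\fun_\liep$ and $\tilde\fun_\lia(\overline{G\cdot x})$ is convex, one has $\pi_\lia(E)=\tilde\fun_\lia(\overline{G\cdot x})=C$ where $E:=\mathrm{Conv}(\tilde\fun_\liep(\overline{G\cdot x}))$; invoking Lemma $7$ of \cite{gichev-polar} gives $E=KC=KP_I$. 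Compactness of $K$ together with compactness of $P_I$ shows $E$ is compact (in particular closed); and since every face of the polytope $C$ is exposed, Lemma $1.4$ and Proposition $1.4$ of \cite{bgh-israel-p} yield that every face of $E=KC$ is exposed.

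The main obstacle I expect is the reduction step, i.e.\ proving existence of the single orbit element $v$ whose $A$-orbit closure already realizes the full image $\tilde\fun_\lia(\overline{G\cdot x})$. In the projective setting one must be careful that the objects controlling the image are the polytopes $P_I=\mathrm{Conv}\{\alpha_i:i\in I\}$, and that two distinct supports $I$ may yield the same polytope (because distinct $\alpha_i$ can coincide, and because $P_I$ only depends on the extreme points among the $\alpha_i$); the partial order "$P_{I}\subseteq P_{J}$" is therefore coarser than "$I\subseteq J$", but the Zariski-openness of $\{u:\mathrm{supp}_u\supseteq I\}$ and hence of $\{u:P_{\mathrm{supp}_u}\supseteq P_I\}$ still holds, so Proposition \ref{convexity-irruducible}'s mechanism goes through. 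The Baire-category variant requires the extra care that one is working with the analytic set $\overline{G\cdot\tilde x}\subset V$ and transferring conclusions to $\overline{G\cdot x}\subset\mathbb P(V)$ via $\pi$; this is routine but must be spelled out, using the $G$-stable orthogonal splittings coming from $G=K\exp(\liep)$ with $K\subset\mathrm O(V)$ exactly as in Theorem \ref{convex-orbit-general}.
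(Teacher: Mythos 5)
Your proposal is correct, but for the first claim it takes a genuinely different route from the paper. The paper does not re-run the irreducibility/Baire machinery in the projective setting: it applies Theorem \ref{convex-orbit-general} to the lift $\tilde x$ to obtain $\tilde v\in G\cdot \tilde x$ with $\fun_\lia(\overline{G\cdot \tilde x})=\fun_\lia(\overline{A\cdot \tilde v})=C_I$, puts $v=\pi(\tilde v)$ so that $\tilde \fun_\lia(\overline{A\cdot v})=P_I$ by Theorem \ref{image}, and then deduces $\tilde\fun_\lia(\overline{A\cdot y})\subset P_I$ for all $y\in G\cdot x$ from $\fun_\lia(\overline{A\cdot \tilde y})\subset C_I$; the second half ($E=KP_I$ via Gichev's Lemma 7 and exposedness via \cite{bgh-israel-p}) is exactly what you do. Your longer route actually buys something: the inference ``$C_J\subset C_I$ implies $P_J\subset P_I$'' underlying the paper's shortcut is false for general index sets (take $\alpha_1=2\alpha_2\neq 0$: then $C_{\{1\}}=C_{\{2\}}$ but $P_{\{1\}}\not\subset P_{\{2\}}$); what really makes the paper's step work is that the set of weights occurring in any $\tilde y\in\overline{G\cdot\tilde x}$ is contained in the set of weights occurring in $\tilde v$, a fact that comes out of the proof of Theorem \ref{convex-orbit-general} but is not what is literally cited. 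By running the $U_v$-covering (resp.\ Baire) argument directly with the polytopes $P_{\supp_u}$ you avoid this comparison entirely, and your observation that $\{u:\,P_I\subset P_{\supp_u}\}$ is still Zariski open because the condition is monotone in $\supp_u$ is precisely what makes the transplantation legitimate. Two details to tighten when writing it up: first, $\pi(\overline{G\cdot\tilde x}\setminus\{0\})$ need not contain $\overline{G\cdot x}$, since $\pi$ is not proper near the origin and limit points of $G\cdot x$ only lift into $\overline{\R_{>0}\, G\cdot\tilde x}$; so replace the identity $\tilde\fun_\lia(\overline{G\cdot x})=\tilde\fun_\lia(\pi(\overline{G\cdot\tilde x}))$ by $\tilde\fun_\lia(\overline{G\cdot x})=\overline{\tilde\fun_\lia(G\cdot x)}$, which together with closedness of the polytope $P_I$ is all you need. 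Second, the open-and-dense statement you invoke in the Baire case is stated in the paper only for the linear gradient map, so its projective analogue (points of full support in the invariant subspace $W$ realize $P_{J_W}$ and form an open dense subset of $\mathbb P(W)$) should be recorded explicitly.
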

\begin{proof}
Let $\tilde x \in V$ such that $\pi(\tilde x)=x$.
By Theorem \ref{convex-orbit-general} it follows that $\fun_\lia (\overline{G\cdot \tilde x})=\fun_{\lia}(\overline{A\cdot \tilde v})=C_I$, where $I=\mathrm{supp}_{\tilde v}$. By the above corollary $\tilde \fun_\lia (\overline{A \cdot v})=P_I$, where $v=\pi (\tilde v)$. Since for any $\tilde y\in \overline{G\cdot \tilde x}$, we get  $\fun_\lia (\overline{A\cdot \tilde y})\subset C_I$, it follows $\tilde \fun_\lia (\overline{A\cdot y})\subset P_I$, for any $y\in G\cdot x$ and so the first part is proved.

Set $E= \mathrm{Conv}(\tilde \fun_\liep (\overline{G\cdot x}))$. By Lemma $7$ in \cite{gichev-polar}  we get $\pi_\lia (E)=\tilde \fun_\lia (\overline{A\cdot \pi(v)})$ and so $E=K \tilde \fun_\lia(\overline{A\cdot v})=C P_I$. By Theorem $0.3$ in \cite{bgh-israel-p}, any face of $E$ is exposed.
\end{proof}
\section{Hilbert-Mumford criterion for reductive groups}
In this section we prove the Hilbert-Mumford criterion for real reductive groups. We use in a different context original ideas from \cite{georgula}.

Let $G\subset \mathrm{GL}(V)$ a closed reductive subgroup satisfying $G=K\exp (\liep)$, where $K=G\cap \mathrm{O}(V)$ is a maximal compact subgroup of $G$ and  $\liep=\lieg\cap \mathrm{Sym}(V)$.
The function
\[
\tilde \Psi : G \times \mathbb P (V) \lra \R \qquad (g,[x]) \mapsto \log \left(\frac{\parallel g x \parallel}{\parallel x \parallel} \right),
\]
is a Kempf-Ness function and the corresponding gradient map $\tilde \fun_\liep :\mathbb P (V) \lra \liep^*$ is given by
\[
\tilde \fun ([x])(\xi)=\frac{\langle \xi x , x \rangle}{\parallel x \parallel^2}.
\]
We may fix $\mathrm{Ad}(K)$-invariant scalar product $\scalo_{\lieg}$ on $\lieg$ such that $\lieg =\liek \oplus \liep$ is an orthogonal splitting. Hence we may think the gradient map as a $\liep$-valued map,
\[
\tilde \fun_\liep : \mathbb P(V) \lra \liep.
\]
We define
\[
f:\mathbb P(V) \lra \R, \qquad [x] \mapsto \frac{1}{2}\langle \tilde \fun_\liep ([x]),\tilde \fun_\liep ([x])\rangle_\lieg.
\]
In the sequel if $\xi \in \lieg$, we denote by $\xi^\# ([x]):=\desudtzero \exp(t\xi) [x]$ the vector field induced by the $G$ action.
\begin{lemma}
The function $f$ is analytic and its gradient is given by $\nabla f (x)=\campo (x)$.
\end{lemma}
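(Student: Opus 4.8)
The plan is to establish the two assertions separately; both are short once the right pieces are assembled.

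\textbf{Analyticity.} First I would observe that the map $\tilde\fun_\liep\colon\mathbb P(V)\to\liep$ is itself real analytic: for a fixed $\xi\in\liep$ the function $[x]\mapsto\tilde\fun_\liep([x])(\xi)=\langle\xi x,x\rangle/\|x\|^2$ is, in homogeneous coordinates, the quotient of a quadratic polynomial by the nowhere-vanishing quadratic polynomial $\|x\|^2$, hence descends to a real analytic function on $\mathbb P(V)$; letting $\xi$ range over a basis of $\liep$ shows that $\tilde\fun_\liep$ is analytic. Since $f$ is the composition of $\tilde\fun_\liep$ with the analytic quadratic form $\eta\mapsto\tfrac12\langle\eta,\eta\rangle_\lieg$ on $\liep$, it is analytic.

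\textbf{The gradient.} Here the metric on $\mathbb P(V)$ is the natural $K$-invariant one. Write $\beta:=\tilde\fun_\liep$, fix $[x]\in\mathbb P(V)$ and $v\in T_{[x]}\mathbb P(V)$, and differentiate $f=\tfrac12\langle\beta,\beta\rangle_\lieg$ by the Leibniz rule:
\[
df|_{[x]}(v)=\langle\beta([x]),\,d\beta|_{[x]}(v)\rangle_\lieg .
\]
For any \emph{fixed} $\eta\in\liep$ one has $\langle d\beta|_{[x]}(v),\eta\rangle_\lieg=d(\tilde\fun_\liep^{\eta})|_{[x]}(v)$, because $\tilde\fun_\liep^{\eta}=\langle\beta(\cdot),\eta\rangle_\lieg$ with $\eta$ held constant. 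Specializing $\eta=\beta([x])$ — legitimate precisely because in the Leibniz formula the factor $\beta([x])$ is frozen and only $d\beta(v)$ is differentiated — gives $df|_{[x]}(v)=d(\tilde\fun_\liep^{\beta([x])})|_{[x]}(v)$. Finally I would invoke the basic property of the gradient map, already used in this paper (e.g.\ in the proofs of Theorems \ref{orbita} and \ref{convexity-projective-abelian}): for every $\eta\in\liep$ the Riemannian gradient of $\tilde\fun_\liep^{\eta}$ equals the fundamental vector field $\eta^\#$, whose flow is $t\mapsto\exp(t\eta)$, i.e.\ $d(\tilde\fun_\liep^{\eta})|_{[x]}(v)=\langle\eta^\#([x]),v\rangle$. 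Applying this with $\eta=\beta([x])$ yields $df|_{[x]}(v)=\langle\beta([x])^\#([x]),v\rangle=\langle\campo([x]),v\rangle$, so $\nabla f([x])=\campo([x])$.

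\textbf{Main obstacle.} There is no serious difficulty beyond bookkeeping; the one point to be careful about is the normalization constant in $\grad\tilde\fun_\liep^{\eta}=\eta^\#$, i.e.\ that with the Riemannian metric implicit in the construction the curves $t\mapsto\exp(t\eta)[x]$ are \emph{exactly} (not merely up to rescaling of the parameter) the gradient trajectories of $\tilde\fun_\liep^{\eta}$; this is intrinsic to the passage from the Kempf-Ness function $\tilde\Psi$ to $\tilde\fun_\liep$ and I would simply cite it. Once that is in hand, the apparent subtlety of letting the base point $\eta=\beta([x])$ vary is dispatched by the Leibniz rule, and no estimates or limits are needed. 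I would also remark that $f$ is the real-reductive analogue of the norm-square of the moment map, and that the identity $\nabla f=\campo$ is exactly what turns its negative gradient flow into the tool used, following \cite{georgula}, in the remainder of this section.
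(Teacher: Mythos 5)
Your proposal is correct and follows essentially the same route as the paper: the analyticity argument is the same observation (the paper phrases it via the explicit formula $xx^T/\|x\|^2-\tfrac{1}{2n}\mathrm{Id}$ for $\mathrm{SL}(n)$ followed by the orthogonal projection $\pi_\liep$, you phrase it componentwise as a quotient of quadratics with nonvanishing denominator), and your Leibniz-rule computation of the gradient is precisely the standard argument the paper dismisses as ``easy to check.'' Your remark about the normalization in $\grad\tilde\fun_\liep^{\eta}=\eta^{\#}$ is well placed, since that identity is exactly what the paper relies on implicitly.
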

\begin{proof}
We identify $V$ with $\R^n$. Assume that $G=\mathrm{SL}(n)$. Since
\[
\tilde \fun_{Sym(n)} ([x])=\frac{xx^T}{\parallel x \parallel^2} - \frac{1}{2n}Id,
\]
$\tilde \fun_{Sym(n)}$ is a polynomial and so it is analytic. If $G\subset \mathrm{SL}(V)$, then $\tilde \fun_\liep =\pi_\liep \circ \tilde \fun_{Sym(n)}$, where $\pi_\liep$ is the orthogonal projection,  and so it analytic as well. The second part of the statement is easy to check.
\end{proof}
The negative gradient flow of $f$ throughout $y\in \mathbb P(V)$ is then solution of the differential equation
\begin{equation}\label{equazione}
\left\{\begin{array}{l}
\dot {x}(t)=-\campo (x(t))\\
x(0)=y \end{array}\right.
\end{equation}
Since $\mathbb P(V)$ is compact, the solution is defined in all $\R$.
\begin{lemma}\label{equazione-sollevata}
Let $g:\R \lra G$ be the unique solution of the differential equation
\[
\left\{
\begin{array}{l}
g^{-1} (t) \dot{g}(t)=\campo (x(t))\\
g(0)=e
\end{array}
\right. .
\]
Then $x(t)=g^{-1}(t) y$.
\end{lemma}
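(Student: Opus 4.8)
The plan is to verify that the curve $t\mapsto g^{-1}(t)y$ solves the same initial value problem \eqref{equazione} as $x(t)$, and then invoke uniqueness of solutions of ODEs. First I would introduce the auxiliary curve $z(t):=g^{-1}(t)y$ and observe that $z(0)=g(0)^{-1}y=e\cdot y=y$, so the initial conditions agree. The substance is the computation of $\dot z(t)$: differentiating $g^{-1}(t)$ gives $\frac{\mathrm{d}}{\mathrm{dt}}g^{-1}(t)=-g^{-1}(t)\dot g(t)g^{-1}(t)$, and combining this with the defining equation $g^{-1}(t)\dot g(t)=\campo(x(t))$ yields $\dot z(t)=-g^{-1}(t)\bigl(g^{-1}(t)\dot g(t)\bigr)^{\#}$-type expression; more precisely, since the action of $G$ on $\PP(V)$ sends $z(t)=g^{-1}(t)y$ and the fundamental vector field is compatible with the group action, one gets $\dot z(t)=-\bigl(\Ad(g^{-1}(t))\campo(x(t))\bigr)^{\#}(z(t))$ after pushing the vector field through the diffeomorphism given by $g^{-1}(t)$.

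The key identity I would then need is that $z(t)=x(t)$ forces $\Ad(g^{-1}(t))\campo(x(t)) = \campo(z(t))$, so that $\dot z(t)=-\campo(z(t))$ and $z$ solves \eqref{equazione}. To make this rigorous without circularity, I would argue as follows: let $\tilde z(t)$ be the solution of $\dot{\tilde z}(t)=-\campo(\tilde z(t))$, $\tilde z(0)=y$, which is exactly $x(t)$ by definition and uniqueness. It then suffices to show $g^{-1}(t)y$ satisfies this equation. The relevant equivariance is $\tilde\fun_\liep(h\cdot[v]) = \Ad(h)\tilde\fun_\liep([v])$ for $h\in K$, but here $h=g^{-1}(t)$ need not lie in $K$; however, what is actually used is the infinitesimal relation. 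Concretely, for any $\xi\in\lieg$ and $h\in G$ one has $(\Ad(h)\xi)^{\#}([v]) = h_*\bigl(\xi^{\#}(h^{-1}[v])\bigr)$, so writing $z(t)=g^{-1}(t)y$ and differentiating $g^{-1}(t)y$ directly via the chain rule on the action map $(g,[v])\mapsto g\cdot[v]$ gives $\dot z(t) = -\bigl(\campo(x(t))\bigr)$ evaluated at $z(t)$ provided $x(t)=z(t)$; so the cleanest route is a bootstrap: show that the curve defined by $g^{-1}(t)y$, together with the ODE for $g$, forms a solution of a coupled system whose projection is \eqref{equazione}, and conclude by uniqueness for that coupled system.

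I would organize the final write-up as: (1) differentiate $t\mapsto g^{-1}(t)y$ using $\frac{\mathrm{d}}{\mathrm{dt}}g^{-1} = -g^{-1}\dot g\, g^{-1}$ and the formula for the derivative of the action map, obtaining $\dot z(t) = -g^{-1}(t)_*\bigl(\campo(x(t))^{\#}\ \text{at}\ x(t)\bigr)$; (2) use the naturality identity $h_*(\xi^{\#}) = (\Ad(h)\xi)^{\#}$ to rewrite the right-hand side at the point $z(t)$; (3) invoke the consistency forced by the defining equation for $g$ so that the vector field appearing is precisely $\campo(z(t))$ once one identifies $x(t)=z(t)$, which is legitimate because $z(0)=y=x(0)$ and both curves then satisfy \eqref{equazione}; (4) apply the Picard–Lindelöf uniqueness theorem on the compact manifold $\PP(V)$ (solutions exist for all $t$ as already noted) to conclude $z(t)=x(t)$ for all $t$, i.e.\ $x(t)=g^{-1}(t)y$.

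The main obstacle I anticipate is the bookkeeping in step (1)–(2): one must be careful that $\campo(x(t))$ in the equation for $g$ is being read as a Lie algebra element $\tilde\fun_\liep(x(t))^{\#}$ and that pushing it forward by the left translation $g^{-1}(t)$ produces exactly the fundamental vector field of $\Ad(g^{-1}(t))\tilde\fun_\liep(x(t))$ at the translated point, so that the apparent mismatch between $\campo(x(t))$ and $\campo(z(t))$ is resolved only after the identification $x=z$; phrasing this without circular reasoning (by treating it as a single coupled ODE system in $(g,z)$ and applying uniqueness once) is the delicate point, but it is a standard manoeuvre.
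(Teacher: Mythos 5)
Your strategy --- set $z(t)=g^{-1}(t)y$, check $z(0)=y$, show that $z$ satisfies the same ODE as $x$, and conclude by uniqueness --- is exactly the paper's proof, which consists of the single computation $\frac{\mathrm{d}}{\mathrm{dt}}\bigl(g^{-1}(t)y\bigr)=-g^{-1}\dot g\, g^{-1}y$ followed by an (implicit) appeal to uniqueness. So the approach is right. However, two of your intermediate formulas are incorrect, and the circularity you worry about disappears once they are fixed. Since $G\subset\mathrm{GL}(V)$ acts linearly, differentiating gives directly $\dot z(t)=-\bigl(g^{-1}(t)\dot g(t)\bigr)\cdot\bigl(g^{-1}(t)y\bigr)=-\xi(t)^{\#}(z(t))$, where $\xi(t):=g^{-1}(t)\dot g(t)=\tilde\fun_\liep(x(t))\in\liep$: the left-logarithmic derivative acts at the translated point $z(t)$ with \emph{no} $\Ad(g^{-1}(t))$. (If you insist on factoring through $(g^{-1}(t))_*$ you must use the right-logarithmic derivative $\dot g\,g^{-1}=\Ad(g)\xi$, and the two $\Ad$'s cancel; your displayed formula $\dot z(t)=-\bigl(\Ad(g^{-1}(t))\,\tilde\fun_\liep(x(t))\bigr)^{\#}(z(t))$ retains one of them, and your step (1) moreover evaluates the pushed-forward field at $g^{-1}(t)x(t)$ rather than at $z(t)$.) With the corrected formula no coupled system or bootstrap is needed: the coefficient curve $t\mapsto\xi(t)=\tilde\fun_\liep(x(t))$ is already determined by the \emph{given} solution $x$ of \eqref{equazione}, so both $z$ and $x$ solve the same time-dependent \emph{linear} equation $\dot w(t)=-\xi(t)^{\#}(w(t))$ with $w(0)=y$ (the curve $x$ does so by the very definition of $\campo$), and uniqueness for that non-autonomous equation yields $z\equiv x$. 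Your step (3) as written (``once one identifies $x(t)=z(t)$\dots because both curves then satisfy \eqref{equazione}'') is circular and should be replaced by this comparison; your coupled-system fallback can also be made rigorous, but it is more machinery than the lemma requires.
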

\begin{proof}
The solution $g$ is defined in all $\R$ (see \cite{kobayashi} p. $69$). Since $\dot{g^{-1}}=-g^{-1}\dot{g}g^{-1}$, it follows
\[
\dot{x}(t) =-g^{-1} \dot{g} g^{-1} y=- \campo (x(t)),
\]
and so the result is proved.
\end{proof}
The following Theorem arises from Lojasiewicz gradient inequality, see \cite{birestone,losa}. A proof is given in \cite[Theorem 3.3 p.$14$]{georgula}.
\begin{teo}\label{loja}
In the above assumption, the limit $\lim_{\mapsto +\infty} x(t)=x_\infty$ exists. Moreover, there exists positive constants $\alpha,c,\beta$, $\frac{1}{2}<\gamma <1$ and $T>0$ such that for any $t > T$ we have
\[
\begin{split}
d(x(t),x_\infty)&\leq \int_{t}^{\infty} \parallel \dot x (s) \parallel \mathrm{d} s \\
            &\leq \frac{\alpha}{1-\gamma}(f(x(t))-f(x_\infty))^{1-\gamma} \\
            &\leq \frac{c}{(t-T)^\beta}.
\end{split}
\]
\end{teo}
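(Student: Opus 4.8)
The plan is to run the standard Lojasiewicz gradient-flow argument on the compact real analytic manifold $\mathbb P(V)$. Recall from the preceding lemma and the discussion following it that $f$ is real analytic, that $\nabla f=\campo$, and that the negative gradient trajectory $x(t)$ solving \eqref{equazione} is defined for all $t\in\R$. Along the flow $\frac{\mathrm{d}}{\mathrm{d} t}f(x(t))=-\parallel\nabla f(x(t))\parallel^2\le 0$, so $t\mapsto f(x(t))$ is non-increasing and, $\mathbb P(V)$ being compact, bounded; hence $f(x(t))$ decreases to some value $f_\infty$. By compactness the trajectory has a cluster point $p$, and the usual $\omega$-limit argument for gradient flows shows that $p$ is a critical point of $f$ with $f(p)=f_\infty$.

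First I would invoke the Lojasiewicz gradient inequality for the real analytic function $f$ near $p$: there are a neighbourhood $W$ of $p$, a constant $\kappa>0$ and an exponent $\gamma\in[\tfrac12,1)$ with $\parallel\nabla f(y)\parallel\ge \kappa\,|f(y)-f_\infty|^{\gamma}$ for all $y\in W$; shrinking $W$ so that $|f-f_\infty|\le 1$ on $W$, we may assume $\tfrac12<\gamma<1$, since enlarging the exponent only weakens the inequality. Writing $\phi(t):=f(x(t))-f_\infty\ge 0$ and using $\dot x=-\nabla f$, a short computation gives, whenever $x(t)\in W$,
\[
-\frac{\mathrm{d}}{\mathrm{d} t}\,\phi(t)^{1-\gamma}=(1-\gamma)\,\phi(t)^{-\gamma}\parallel\nabla f(x(t))\parallel^{2}\ge (1-\gamma)\,\kappa\,\parallel\nabla f(x(t))\parallel=(1-\gamma)\,\kappa\,\parallel\dot x(t)\parallel .
\]
This is the inequality that drives everything; the rest is bookkeeping.

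The main obstacle is the classical one: one must ensure the trajectory never leaves $W$, so that the local inequality stays in force. I would fix a ball $B(p,r)\subset W$ and choose $T$ so large that $d(x(T),p)<r/2$ and $\frac{1}{(1-\gamma)\kappa}\,\phi(T)^{1-\gamma}<r/2$, which is possible because $p$ is a cluster point and $\phi(t)\to 0$. A continuation argument then forces $x(t)\in B(p,r)$ for all $t\ge T$: as long as the trajectory stays in the ball, integrating the displayed inequality from $T$ to $t$ bounds its arclength $\int_T^t\parallel\dot x(s)\parallel\,\mathrm{d} s$ by $\frac{1}{(1-\gamma)\kappa}\,\phi(T)^{1-\gamma}<r/2$, so it cannot reach $\partial B(p,r)$. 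Hence $\int_T^\infty\parallel\dot x(s)\parallel\,\mathrm{d} s<\infty$, the curve is Cauchy and converges, and its limit is necessarily the cluster point $p$, which we rename $x_\infty$; this establishes $\lim_{t\to+\infty}x(t)=x_\infty$. Integrating the displayed inequality from $t\ge T$ to $+\infty$ (with $\phi(s)\to 0$) gives $\int_t^\infty\parallel\dot x(s)\parallel\,\mathrm{d} s\le\frac{\alpha}{1-\gamma}\,(f(x(t))-f(x_\infty))^{1-\gamma}$ with $\alpha:=1/\kappa$, while $d(x(t),x_\infty)\le\int_t^\infty\parallel\dot x(s)\parallel\,\mathrm{d} s$ is merely the bound of distance by curve length.

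Finally, for the power-law rate I would feed the Lojasiewicz inequality back into the energy identity: $\phi'(t)=-\parallel\nabla f(x(t))\parallel^2\le -\kappa^{2}\phi(t)^{2\gamma}$ for $t\ge T$. Since $2\gamma-1>0$, differentiating $\phi^{1-2\gamma}$ gives $\frac{\mathrm{d}}{\mathrm{d} t}\phi(t)^{1-2\gamma}\ge(2\gamma-1)\kappa^{2}$, whence $\phi(t)^{1-2\gamma}\ge(2\gamma-1)\kappa^{2}(t-T)$ and $\phi(t)\le C_1\,(t-T)^{-1/(2\gamma-1)}$ for $t>T$. Combining with the previous paragraph yields $d(x(t),x_\infty)\le\frac{\alpha}{1-\gamma}\,\phi(t)^{1-\gamma}\le c\,(t-T)^{-\beta}$ for a suitable constant $c>0$ and $\beta=\tfrac{1-\gamma}{2\gamma-1}>0$, which is exactly the asserted chain of inequalities.
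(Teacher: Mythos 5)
Your proof is correct and is precisely the standard Lojasiewicz gradient-inequality argument: descent of $f$ along the flow, the local gradient inequality near a cluster point, the trapping/arclength estimate giving convergence and the bound by $\frac{\alpha}{1-\gamma}(f(x(t))-f(x_\infty))^{1-\gamma}$, and the differential inequality $\phi'\leq -\kappa^2\phi^{2\gamma}$ yielding the power-law rate with $\beta=\frac{1-\gamma}{2\gamma-1}$. The paper does not prove this theorem itself but only cites \cite{georgula}, and your argument is essentially the one given there, so there is nothing to add.
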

The following Theorem is a consequence of the Stratification Theorem in \cite{heinzner-schwarz-stoetzel} and results proved in \cite{jabo,lerman}.
\begin{teo}\label{limite}
Let $y\in \mathbb P(V)$ and let $x:\R \lra \mathbb P(V)$ be the solution of \ref{equazione}. Let $x_\infty=\lim_{t\mapsto +\infty} x(t)$. Then
\[
\parallel \tilde \fun_\liep (x_\infty) \parallel=\mathrm{inf}_{z\in \overline{G \cdot y}} \parallel \tilde \fun_\liep (z) \parallel.
\]
Moreover, if $z\in \overline{G \cdot y}$ satisfies $\parallel \tilde \fun_\liep (z)\parallel=\mathrm{inf}_{z\in \overline{G \cdot y}} \parallel \tilde \fun_\liep (z) \parallel$, then $z\in K\cdot x_\infty$.
\end{teo}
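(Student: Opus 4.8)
The plan is to derive Theorem~\ref{limite} from the convergence statement of Theorem~\ref{loja}, the lifting Lemma~\ref{equazione-sollevata}, and the Stratification Theorem for real reductive actions of \cite{heinzner-schwarz-stoetzel} (together with \cite{jabo,lerman}). First I would identify the limit. Along the negative gradient flow \eqref{equazione} one has $\frac{\mathrm{d}}{\mathrm{dt}}f(x(t))=-\parallel\campo(x(t))\parallel^2\le 0$, so $f(x(t))$ decreases to $f(x_\infty)$ and $\int_0^{+\infty}\parallel\campo(x(t))\parallel^2\,\mathrm{d}t=f(y)-f(x_\infty)<+\infty$. Choosing $t_n\to+\infty$ with $\campo(x(t_n))\to 0$ and using that $\campo$ is continuous and $x(t_n)\to x_\infty$ (Theorem~\ref{loja}), we obtain $\campo(x_\infty)=0$, i.e.\ $x_\infty\in\Crit f$. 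Set $\beta:=\tilde\fun_\liep(x_\infty)\in\liep$; then $\beta^\#(x_\infty)=\campo(x_\infty)=0$, so $\exp(\R\beta)\subset G_{x_\infty}$. Since $x(t)=g(t)^{-1}y\in G\cdot y$ by Lemma~\ref{equazione-sollevata}, we have $x_\infty\in\overline{G\cdot y}$, and therefore $\parallel\tilde\fun_\liep(x_\infty)\parallel=\parallel\beta\parallel\ge\inf_{z\in\overline{G\cdot y}}\parallel\tilde\fun_\liep(z)\parallel$ for free.

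For the reverse inequality I would invoke the stratification. The sets $S_\beta$, consisting of the points whose negative gradient trajectory converges to the critical component on which $\tilde\fun_\liep$ takes values in $\Ad(K)\beta$, are $G$-invariant, locally closed, finite in number, and satisfy $\overline{S_\beta}\subset S_\beta\cup\bigcup\{S_{\beta'}:\ \parallel\beta'\parallel>\parallel\beta\parallel\}$. Since $S_\beta$ is $G$-invariant and $y\in S_\beta$, we get $\overline{G\cdot y}\subset\overline{S_\beta}$, so every $z\in\overline{G\cdot y}$ lies in some $S_{\beta'}$ with $\parallel\beta'\parallel\ge\parallel\beta\parallel$. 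Flowing $z$ down by \eqref{equazione} lands on a critical point of $f$ whose $\tilde\fun_\liep$-norm equals $\parallel\beta'\parallel$, and the monotonicity of $f$ along the flow gives $\parallel\tilde\fun_\liep(z)\parallel\ge\parallel\beta'\parallel\ge\parallel\beta\parallel$. Hence $\inf_{z\in\overline{G\cdot y}}\parallel\tilde\fun_\liep(z)\parallel=\parallel\beta\parallel=\parallel\tilde\fun_\liep(x_\infty)\parallel$. (Alternatively one could run the argument through the moment--weight inequality of \cite{georgula,mundet-Trans}, comparing via Lemma~\ref{equazione-sollevata} the length of the curve $t\mapsto g(t)$ in $G$ with the decrease of $\tilde\Psi$; this seems to require the same analytic input.)

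For the rigidity statement, suppose $z\in\overline{G\cdot y}$ satisfies $\parallel\tilde\fun_\liep(z)\parallel=\parallel\beta\parallel$. By the previous step $z$ cannot lie in a strictly deeper stratum, so $z\in S_\beta$ and $f(z)=f(x_\infty)=\min_{\overline{G\cdot y}}f$; since $\overline{G\cdot y}$ is closed and $G$-invariant, the trajectory of \eqref{equazione} through $z$ stays in it and cannot decrease $f$, hence is constant, so $z\in\Crit f$ and $\tilde\fun_\liep(z)\in\Ad(K)\beta$. After replacing $z$ by $k^{-1}z$ for a suitable $k\in K$ we may assume $\tilde\fun_\liep(z)=\beta$, so $z$ and $x_\infty$ both belong to the minimal set $\{p\in\overline{G\cdot y}:\ \tilde\fun_\liep(p)=\beta\}$ of the stratum $S_\beta$. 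The fibration of $S_\beta$ over its minimal set, described in \cite{heinzner-schwarz-stoetzel} via \cite{jabo,lerman}, forces this minimal set to meet $\overline{G\cdot y}$ in a single $(K\cap G^\beta)$-orbit; hence $z\in(K\cap G^\beta)\cdot x_\infty\subset K\cdot x_\infty$, and undoing the substitution gives $z\in K\cdot x_\infty$.

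The hard part is the two structural facts about the stratification used above: the inclusion $\overline{S_\beta}\subset S_\beta\cup\bigcup_{\parallel\beta'\parallel>\parallel\beta\parallel}S_{\beta'}$ (only more unstable strata occur in the closure) and the uniqueness up to $K\cap G^\beta$ of the minimizer of $\parallel\tilde\fun_\liep\parallel$ on $\overline{G\cdot y}$; both are genuine inputs from \cite{heinzner-schwarz-stoetzel,jabo,lerman}. The remaining ingredients — that $x_\infty$ is a critical point with $\exp(\R\beta)\subset G_{x_\infty}$, and the trivial inequality $\parallel\beta\parallel\ge\inf$ — follow directly from Theorem~\ref{loja}, Lemma~\ref{equazione-sollevata} and the definitions.
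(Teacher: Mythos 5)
Your proposal is correct and takes essentially the same approach as the paper: both reduce the statement to the Stratification Theorem of \cite{heinzner-schwarz-stoetzel} together with Jablonski's collapse of $G\cdot y$ to a single $K$-orbit \cite{jabo} and Lerman's continuity of the limit map $\phi_\infty$ on the stratum \cite{lerman}. You make explicit a few points the paper leaves implicit (that $x_\infty$, and any minimizer of $\parallel \tilde\fun_\liep\parallel$ on $\overline{G\cdot y}$, is a critical point of $f$), but the skeleton and the external inputs are identical.
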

\begin{proof}
Let $\mathcal C_\liep$ be the set of critical points of $f$. Set $\mathcal B_\liep=\tilde \fun_\liep (\mathcal C_\liep)$. Since $f$ is $K$-invariant, the sets $\mathcal C_\liep$ and $\mathcal B_\liep$ are $K$-invariant.
By the Stratification Theorem \cite{heinzner-schwarz-stoetzel}, there exists $\beta \in \mathcal B_\liep$ such that $\tilde \fun_\liep (\overline{G\cdot y})  \cap \mathcal B_\liep=K\beta$ and
$
\mathrm{inf}_{z\in \overline{G \cdot y}} \parallel \tilde \fun_\liep (z) \parallel=\parallel \beta \parallel.
$
Since $x_\infty$ is a critical point of $f$ it follows $
\mathrm{inf}_{z\in \overline{G \cdot y}} \parallel \tilde \fun_\liep (z) \parallel=\parallel \tilde \fun_\liep (x_\infty) \parallel.$
By  Theorem $5.1$ in  \cite{jabo}, $G\cdot y$ collapses to a single $K$ orbit under the negative gradient flow of $f$.  Let $C$ be the connected component of the critical set of  $f$ corresponding to $\beta$ and let $S_{\beta}$ the corresponding stratum. In \cite{lerman}, in the complex setting, the author proves that the map
\[
\phi_\infty: S_\beta \lra C \qquad \phi_\infty (x)=x_\infty,
\]
is a continuous retraction. This result follows by the Lojasiewicz gradient inequality (see Lemma 2.3. p.$124$). Therefore the same holds in our situation.

Let $g_n y \mapsto r$ be such that
\[
\mathrm{inf}_{z\in \overline{G \cdot y}} \parallel \tilde \fun_\liep (z) \parallel=\parallel \tilde \fun_\liep (r) \parallel.
\]
Then
\[
\lim_{n \mapsto +\infty} \phi_\infty (g_n y)=\phi_\infty (r)=r,
\]
and so $r$ belongs to $K \cdot x_\infty$.
\end{proof}
We define on $G$ the left-Riemannian metric which agree with  $\scalo_{\lieg}$ on the tangent space $T_e G$. This metric is $K$-invariant with respect the right $K$-action. Let $\Phi_x: G \lra \R$, defined as
\[
\Phi_x (g)=\log \left(\frac{\parallel g^{-1} x \parallel}{\parallel x \parallel} \right),
\]
\begin{lemma}\label{scambio-gradienti}
The differential of $\Phi_x$ is given by $(\mathrm{d} \Phi_x )_g (v)=-\langle \tilde \fun_\liep (g^{-1} x), \mathrm{d} L_{g^{-1}}(v) \rangle$. Therefore $\nabla \Phi_x (g)=v_x(g)$
where $v_x(g)=-\mathrm{d} L_g (\tilde \fun_\liep (g^{-1} x))$,
\end{lemma}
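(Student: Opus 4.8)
The plan is to differentiate $\Phi_x$ directly from its definition and then convert the resulting differential into a gradient by means of the left-invariant metric on $G$. First I would fix $g\in G$ and $v\in T_gG$ and set $\xi:=\mathrm{d}L_{g^{-1}}(v)\in\lieg$, so that $t\mapsto g\exp(t\xi)$ is a curve through $g$ with velocity $v$ at $t=0$. Since $(g\exp(t\xi))^{-1}=\exp(-t\xi)g^{-1}$, writing $y:=g^{-1}x$ and using that $\Phi_x(h)=\frac{1}{2}\log\langle h^{-1}x,h^{-1}x\rangle-\log\parallel x\parallel$ (which is smooth since $h^{-1}x\neq 0$), we obtain
\[
(\mathrm{d}\Phi_x)_g(v)=\deze\,\frac{1}{2}\log\langle \exp(-t\xi)y,\exp(-t\xi)y\rangle=\frac{\langle -\xi y,y\rangle}{\parallel y\parallel^2}=-\frac{\langle \xi y,y\rangle}{\parallel y\parallel^2}.
\]

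Next I would decompose $\xi=\xi_\liek+\xi_\liep$ according to the Cartan decomposition $\lieg=\liek\oplus\liep$. Since $\liek$ consists of skew-symmetric endomorphisms of $V$, we have $\langle \xi_\liek y,y\rangle=0$, hence $\langle \xi y,y\rangle=\langle \xi_\liep y,y\rangle$. By the formula for $\tilde\fun_\liep$, $\frac{\langle \xi_\liep y,y\rangle}{\parallel y\parallel^2}=\tilde\fun_\liep([y])(\xi_\liep)$, and under the identification of $\liep$ with $\liep^*$ given by $\scalo_\lieg$ this equals $\langle \tilde\fun_\liep(g^{-1}x),\xi_\liep\rangle_\lieg$; since $\tilde\fun_\liep(g^{-1}x)\in\liep$ and $\liep$ is $\scalo_\lieg$-orthogonal to $\liek$, it equals $\langle \tilde\fun_\liep(g^{-1}x),\xi\rangle_\lieg=\langle \tilde\fun_\liep(g^{-1}x),\mathrm{d}L_{g^{-1}}(v)\rangle$. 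This yields the first assertion, $(\mathrm{d}\Phi_x)_g(v)=-\langle \tilde\fun_\liep(g^{-1}x),\mathrm{d}L_{g^{-1}}(v)\rangle$.

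Finally, to read off the gradient I would use that the left-invariant metric satisfies $\langle v,w\rangle_g=\langle \mathrm{d}L_{g^{-1}}(v),\mathrm{d}L_{g^{-1}}(w)\rangle_\lieg$ for $v,w\in T_gG$. Writing $\nabla\Phi_x(g)=\mathrm{d}L_g(\zeta)$ with $\zeta\in\lieg$, the defining relation $\langle \nabla\Phi_x(g),v\rangle_g=(\mathrm{d}\Phi_x)_g(v)$ becomes $\langle \zeta,\mathrm{d}L_{g^{-1}}(v)\rangle_\lieg=-\langle \tilde\fun_\liep(g^{-1}x),\mathrm{d}L_{g^{-1}}(v)\rangle_\lieg$ for all $v$; since $\mathrm{d}L_{g^{-1}}(v)$ exhausts $\lieg$ as $v$ ranges over $T_gG$, we conclude $\zeta=-\tilde\fun_\liep(g^{-1}x)$, i.e. $\nabla\Phi_x(g)=-\mathrm{d}L_g(\tilde\fun_\liep(g^{-1}x))=v_x(g)$. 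The argument is essentially routine; the only places needing care are the sign bookkeeping — the inversion $g\mapsto g^{-1}$ in $\Phi_x$ and the minus sign it forces in $v_x$ — and the observation that the $\liek$-component of $\xi$ is invisible to the pairing because $\tilde\fun_\liep$ is $\liep$-valued.
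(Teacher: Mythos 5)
Your proof is correct and follows essentially the same route as the paper: compute $(\mathrm{d}\Phi_x)_g(\mathrm{d}L_g(\xi))=-\langle\xi y,y\rangle/\parallel y\parallel^2$ with $y=g^{-1}x$, kill the $\liek$-component using skew-symmetry and the $\scalo_\lieg$-orthogonality of $\liek$ and $\liep$, and then read off the gradient from the left-invariance of the metric. The only cosmetic difference is that you decompose $\xi=\xi_\liek+\xi_\liep$ while the paper treats the two cases $X\in\liep$ and $X\in\liek$ separately, which amounts to the same computation.
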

\begin{proof}
Let $g\in G$ and let $X\in \lieg$. Then
\[
(\mathrm{d} \Phi_x )_g ( \mathrm{d} L_g (X))=\frac{-\langle X (g^{-1} x), g^{-1} x \rangle}{\parallel g^{-1} x \parallel^2}.
\]
If $X\in \liep$ then $(\mathrm{d} \Psi_x )_g ( \mathrm{d} L_g (X))=-\langle \tilde \fun_\liep (g^{-1} x), X \rangle_\lieg$. If $X\in \liek$, then
\[
0=(\mathrm{d} \Phi_x )_g ( \mathrm{d} L_g (X))=\langle \tilde \fun_\liep (g^{-1}x), X \rangle_\lieg.
\]
This means
\[
\begin{split}
(\mathrm{d} \Phi_x )_g ( \mathrm{d} L_g (X))&=-\langle \tilde \fun_\liep (g^{-1}x), X \rangle_\lieg \\
                                           &=-\langle \mathrm{d} L_g ( \fun_\liep (g^{-1}x) ), \mathrm L_g (X) \rangle,
\end{split}
\]
concluding the proof.
\end{proof}
Define $\Theta_x : G \lra G(x)$ as follows
\[
\Theta_x (g)=g^{-1} x.
\]
\begin{lemma}
The map $\Theta_x$ intertwines the gradient of $\Phi_x$ and $\nabla f$.
\end{lemma}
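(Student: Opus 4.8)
The plan is to establish the intertwining relation pointwise, i.e. to show that $(\mathrm{d}\Theta_x)_g\bigl(\nabla\Phi_x(g)\bigr)=\nabla f\bigl(\Theta_x(g)\bigr)$ for every $g\in G$. The three ingredients are: an explicit formula for the differential of $\Theta_x$, the expression $\nabla\Phi_x(g)=-\mathrm{d}L_g\bigl(\tilde\fun_\liep(g^{-1}x)\bigr)$ from Lemma \ref{scambio-gradienti}, and the identity $\nabla f=\campo$ from the preceding lemma.

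First I would compute $(\mathrm{d}\Theta_x)_g$. Since the metric on $G$ is left-invariant, I write a tangent vector at $g$ as $\mathrm{d}L_g(\xi)$ with $\xi\in\lieg$; differentiating the curve $t\mapsto g\exp(t\xi)$ and using $\Theta_x\bigl(g\exp(t\xi)\bigr)=\exp(-t\xi)\,g^{-1}x$ together with the definition $\xi^\#([y])=\desudtzero\exp(t\xi)[y]$ of the fundamental vector field, one obtains
\[
(\mathrm{d}\Theta_x)_g\bigl(\mathrm{d}L_g(\xi)\bigr)=-\,\xi^\#\bigl(\Theta_x(g)\bigr);
\]
the minus sign is exactly the contribution of the inversion in $\Theta_x(g)=g^{-1}x$. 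Now I substitute $\nabla\Phi_x(g)=-\mathrm{d}L_g\bigl(\tilde\fun_\liep(g^{-1}x)\bigr)$, that is, apply the previous identity with $\xi=-\tilde\fun_\liep(g^{-1}x)\in\liep\subset\lieg$; the two signs cancel and
\[
(\mathrm{d}\Theta_x)_g\bigl(\nabla\Phi_x(g)\bigr)=\campo\bigl(g^{-1}x\bigr)=\campo\bigl(\Theta_x(g)\bigr)=\nabla f\bigl(\Theta_x(g)\bigr),
\]
the last equality being the preceding lemma. This is precisely the assertion that $\Theta_x$ intertwines the gradient of $\Phi_x$ with $\nabla f$; in particular $\Theta_x$ carries the negative gradient trajectory of $\Phi_x$ issuing from $e$ onto the negative gradient trajectory of $f$ issuing from $x$, in agreement with Lemma \ref{equazione-sollevata}.

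The computation is short and essentially formal, so I do not expect a genuine obstacle: the only thing requiring care is the bookkeeping, namely keeping mutually consistent the left-invariance of the metric on $G$ (which is why $\nabla\Phi_x$ appears in the form $\mathrm{d}L_g(\cdot)$), the inversion built into the definition of $\Theta_x$, and the sign conventions for $\xi^\#$ and for $\nabla f=\campo$.
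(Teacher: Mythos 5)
Your proposal is correct and follows essentially the same route as the paper: both compute $(\mathrm d\Theta_x)_g(\mathrm d L_g(\xi))=-\xi^{\#}(g^{-1}x)$ by differentiating $\Theta_x(g\exp(t\xi))=\exp(-t\xi)g^{-1}x$, and then combine this with Lemma \ref{scambio-gradienti} and $\nabla f=\campo$. You merely make explicit the final substitution $\xi=-\tilde\fun_\liep(g^{-1}x)$ and the sign cancellation, which the paper leaves implicit.
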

\begin{proof}
Let $\xi \in \liep$. Since
\[
\Theta_x (g\exp(t\xi))=\exp(-t \xi ) g^{-1} x,
\]
we get
\[
(\mathrm d \Theta_x )_g (\mathrm{d} L_g (\xi))=-\xi^{\#}(g^{-1}x),
\]
and so the result is proved.
\end{proof}
Since $\Phi_x$ is $K$-invariant, it descends to a smooth map $\Phi_x : G/K \lra \R$, that we also denote by $\Phi_x$. A proof of the next Lemma is given in \cite{georgula} (Lemma A.3 p.$140$).
\begin{lemma}\label{dietmar}
Let $F:G/K \lra \R$ be a smooth function that is convex along geodesics. Let
$c_0,c_1: \R \lra M$ be the negative gradient flow of $F$ and let $\rho(t)=d_M (c_0 (t),c_1 (t))$. Then $\rho(t)$ is nonincreasing function.
\end{lemma}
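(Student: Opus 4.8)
The plan is to combine the first variation formula for arc length with the geodesic convexity of $F$, exploiting that $M=G/K$, equipped with the metric descending from $\scalo_\lieg$, is a Hadamard manifold. Indeed, $G/K$ is a Riemannian symmetric space of noncompact type (possibly with a Euclidean factor), hence complete, simply connected and of nonpositive sectional curvature; therefore any two of its points are joined by a unique minimizing geodesic, the distance function is smooth off the diagonal, and the geodesic connecting map is smooth. Since $c_0$ and $c_1$ both solve the autonomous equation $\dot x=-\nabla F(x)$, uniqueness of solutions yields a dichotomy: either $c_0\equiv c_1$, so that $\rho\equiv 0$ and there is nothing to prove, or $c_0(t)\neq c_1(t)$ for all $t$, so that $\rho$ is smooth and strictly positive. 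From now on I am in the second case.

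Next I fix $t$ and let $\Gamma(\cdot,t)\colon[0,1]\lra M$ be the unique minimizing geodesic from $c_0(t)$ to $c_1(t)$, parametrized proportionally to arc length, so that $|\partial_s\Gamma(s,t)|\equiv\rho(t)$. Differentiating $\rho(t)$ by the first variation formula for the length functional, applied to the variation $t\mapsto\Gamma(\cdot,t)$ and using that each $\Gamma(\cdot,t)$ is a geodesic, I obtain
\[
\rho'(t)=\frac{1}{\rho(t)}\Bigl(\langle\partial_s\Gamma(1,t),\dot c_1(t)\rangle-\langle\partial_s\Gamma(0,t),\dot c_0(t)\rangle\Bigr).
\]
Substituting $\dot c_i(t)=-\nabla F(c_i(t))$ and setting $h(s):=F(\Gamma(s,t))$, the chain rule gives $h'(0)=\langle\nabla F(c_0(t)),\partial_s\Gamma(0,t)\rangle$ and $h'(1)=\langle\nabla F(c_1(t)),\partial_s\Gamma(1,t)\rangle$, hence
\[
\rho'(t)=\frac{1}{\rho(t)}\bigl(h'(0)-h'(1)\bigr).
\]
Since $F$ is convex along geodesics, $h$ is convex on $[0,1]$, so $h'$ is nondecreasing and $h'(0)\le h'(1)$; therefore $\rho'(t)\le 0$, which is the assertion.

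The steps that require care are purely technical: the regularity of $\rho$, which is handled by the ODE-uniqueness dichotomy above together with the smoothness of $d_M$ on a Hadamard manifold away from the diagonal, and the correct bookkeeping of signs in the first variation formula. The one genuinely structural input is the nonpositive curvature of $G/K$, which is what guarantees both the uniqueness of the connecting geodesic used in the variation and the smoothness of the distance function; this is exactly where the reductive symmetric-space geometry enters the argument. (This reproduces, in the present setting, Lemma A.3 of \cite{georgula}.)
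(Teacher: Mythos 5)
Your proof is correct, and it is essentially the argument of the source the paper itself relies on: the paper gives no proof of this lemma, citing instead Lemma A.3 of \cite{georgula}, whose proof is exactly your combination of the first variation of arc length along the connecting geodesic with the monotonicity of $h'$ coming from geodesic convexity, on the Hadamard manifold $G/K$. The sign bookkeeping in $\rho'(t)=\rho(t)^{-1}\bigl(h'(0)-h'(1)\bigr)$ and the ODE-uniqueness dichotomy handling the regularity of $\rho$ are both correct, so nothing is missing.
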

\begin{teo}\label{Kemf-Ness-Morse}
Let $\Phi_x :G/K \lra \R$. Then
\begin{enumerate}
\item $\Phi_x$ is a Morse-Bott function and it is convex along geodesics;
\item The critical set, possibly empty, is the submanifold
\[
\{gK \in G/K:\, \tilde \fun_\liep (g^{-1} x)=0\}.
\]
\item If $c:I \lra M$ is a negative gradient flow of  $\Phi_x$, then
\[
\lim_{t\mapsto +\infty} \Phi_x (c(t))=\mathrm{inf}_{x\in G/K} \Phi_x
\]
\end{enumerate}
\end{teo}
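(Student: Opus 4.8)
The plan is to handle the three items successively; the heart of the matter is item (1), and the one non-routine ingredient is the precise description of $\Crit\Phi_x$.

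First I would prove convexity along geodesics. Since $G=K\exp(\liep)$, the geodesics of $G/K$ issuing from $g_0K$ are the curves $t\mapsto g_0\exp(t\xi)K$ with $\xi\in\liep$; writing $z=g_0^{-1}x$ we get
\[
\Phi_x(g_0\exp(t\xi)K)=\log\frac{\parallel\exp(-t\xi)z\parallel}{\parallel z\parallel}+\log\frac{\parallel z\parallel}{\parallel x\parallel}=\tilde\Psi([z],\exp(-t\xi))+\mathrm{const}.
\]
The computation of $f''$ in the lemma showing that $\tilde\Psi$ is a Kempf--Ness function — which is precisely property $(P3)$ applied to the one--parameter group $\exp(\R\xi)$ — gives $\frac{\mathrm{d}^2}{\mathrm{d}t^2}\tilde\Psi([z],\exp(t\eta))\ge 0$ for every $\eta\in\liep$, so $t\mapsto\Phi_x(g_0\exp(t\xi)K)$ is convex. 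As a byproduct, $\parallel\nabla\Phi_x(gK)\parallel=\parallel\tilde\fun_\liep(g^{-1}x)\parallel$ is bounded on $G/K$ (the argument of $\tilde\fun_\liep$ runs in the compact space $\PP(V)$), so the negative gradient vector field is complete and one may take $I=[0,\infty)$ in item (3).

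For the critical set and the Morse--Bott property, Lemma \ref{scambio-gradienti} gives $\nabla\Phi_x(g)=-\mathrm{d}L_g(\tilde\fun_\liep(g^{-1}x))$, hence $\Crit\Phi_x=\{gK:\tilde\fun_\liep(g^{-1}x)=0\}$, which may be empty. If it is not, fix $g_0K$ in it; left translation by $g_0$ is an isometry of $G/K$ carrying $\Phi_{g_0^{-1}x}$, up to an additive constant, to $\Phi_x$, so one may assume $eK\in\Crit\Phi_x$, i.e.\ $\tilde\fun_\liep([x])=0$. Arguing with the cocycle condition as in Proposition \ref{compatible} — for $\exp(\xi)\in G_{[x]}$ the function $h(t)=\tilde\Psi([x],\exp(t\xi))$ satisfies $h'(0)=h'(1)=0$, and convexity plus the equality case of $(P3)$ force $\exp(\R\xi)\subseteq G_{[x]}$ — one finds that $G_{[x]}$ is compatible, $G_{[x]}=(G_{[x]}\cap K)\exp(\liep_{[x]})$ with $\liep_{[x]}=\liep\cap\lieg_{[x]}$, and moreover $\liep_{[x]}=\{\xi\in\liep:\xi x=0\}$, since $\exp(\R\xi)[x]=[x]$ forces $\xi x=\lambda x$ with $\lambda=\tilde\fun_\liep([x])(\xi)=0$. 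The same $h'(0)=h'(1)=0$ trick applied to two zeros of $\tilde\fun_\liep$ inside a single $G$--orbit shows that the zeros of $\tilde\fun_\liep$ in $G\cdot[x]$ form one $K$--orbit; hence $\Crit\Phi_x=\{gK:g\in G_{[x]}\}$, the orbit of the base point under the compatible subgroup $G_{[x]}$, which is a totally geodesic submanifold of $G/K$ with $T_{eK}\Crit\Phi_x=\liep_{[x]}$. Finally, at $eK$ along $t\mapsto\exp(t\xi)K$,
\[
\mathrm{Hess}_{eK}\Phi_x(\xi,\xi)=\frac{\mathrm{d}^2}{\mathrm{d}t^2}\big|_{t=0}\tilde\Psi([x],\exp(-t\xi))=\frac{2(\parallel\xi x\parallel^2\parallel x\parallel^2-\langle\xi x,x\rangle^2)}{\parallel x\parallel^4}=\frac{2\parallel\xi x\parallel^2}{\parallel x\parallel^2},
\]
using $\langle\xi x,x\rangle=0$; this is positive semidefinite with kernel exactly $\{\xi\in\liep:\xi x=0\}=\liep_{[x]}=T_{eK}\Crit\Phi_x$. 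So $\Phi_x$ is Morse--Bott, and all its critical submanifolds are minima. This proves (1) and (2).

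For (3), put $m=\inf_{G/K}\Phi_x\in[-\infty,+\infty)$ and let $c\colon[0,\infty)\to G/K$ be a negative gradient trajectory; then $t\mapsto\Phi_x(c(t))$ is nonincreasing with limit $L\ge m$. Suppose $L>m$; then $L$ is finite and there is $q$ with $\delta:=L-\Phi_x(q)>0$. For each $t$ take a minimizing geodesic $\sigma_t\colon[0,1]\to G/K$ from $c(t)$ to $q$ (Hopf--Rinow, using completeness). Convexity of $\Phi_x\circ\sigma_t$ yields $(\Phi_x\circ\sigma_t)'(0)\le\Phi_x(q)-\Phi_x(c(t))\le-\delta$, while $(\Phi_x\circ\sigma_t)'(0)=\langle\nabla\Phi_x(c(t)),\sigma_t'(0)\rangle\ge-\parallel\nabla\Phi_x(c(t))\parallel\, d(c(t),q)$, so $\parallel\nabla\Phi_x(c(t))\parallel\ge\delta/d(c(t),q)$. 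Now $d(c(t),q)\le d(c(0),q)+\int_0^t\parallel\nabla\Phi_x(c(s))\parallel\,\mathrm{d}s$, and Cauchy--Schwarz gives $\left(\int_0^t\parallel\nabla\Phi_x(c)\parallel\,\mathrm{d}s\right)^2\le t\int_0^t\parallel\nabla\Phi_x(c)\parallel^2\,\mathrm{d}s=t(\Phi_x(c(0))-\Phi_x(c(t)))\le t(\Phi_x(c(0))-L)$; hence $d(c(t),q)\le C\sqrt{t}$ for $t\ge 1$ and therefore $\parallel\nabla\Phi_x(c(t))\parallel^2\ge\delta^2/(C^2 t)$. Integrating from $1$ to $\infty$ forces $\int_1^\infty\parallel\nabla\Phi_x(c(t))\parallel^2\,\mathrm{d}t=+\infty$, contradicting $\int_0^\infty\parallel\nabla\Phi_x(c(t))\parallel^2\,\mathrm{d}t=\Phi_x(c(0))-L<\infty$. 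Hence $L=m$, proving (3). The one genuinely non--soft point is the identification of $\Crit\Phi_x$ with the orbit $G_{[x]}K/K$ of a compatible subgroup, together with its submanifold structure and tangent space; this rests on the structure theory of compatible subgroups and on the uniqueness up to $K$ of the zeros of the gradient map along a $G$--orbit. Everything else is either a direct manipulation of the Kempf--Ness function or the elementary convexity estimate just given.
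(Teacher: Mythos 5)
Your proof is correct. For items (1) and (2) you follow essentially the paper's route: convexity along geodesics comes from property $(P3)$ of the Kempf--Ness function evaluated along $t\mapsto g_0\exp(t\xi)K$, the critical set is read off from the gradient formula of Lemma \ref{scambio-gradienti}, and the Morse--Bott property is obtained by combining the compatibility of the stabilizer (the $h'(0)=h'(1)=0$ convexity trick of Proposition \ref{compatible}, which also gives that the zeros of $\tilde\fun_\liep$ in a $G$-orbit form a single $K$-orbit) with the explicit Hessian computation whose kernel is $\{\xi\in\liep:\,\xi x=0\}=T_{eK}\Crit\Phi_x$; the paper phrases the last step through the Hadamard--Cartan chart $\xi\mapsto g\exp(\xi)K$, but the content is identical. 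For item (3) you genuinely diverge. The paper lifts two flow lines to $G$, invokes Lemma \ref{dietmar} (the distance between negative gradient trajectories of a geodesically convex function is nonincreasing) and splits into cases according to whether $\Crit\Phi_x$ is empty: if not, the trajectory stays in a compact set and Morse--Bott gives convergence; if so, a comparison along the geodesics joining the two trajectories yields a contradiction. You instead give a single unified estimate: if the values along the flow stabilized at $L>\inf\Phi_x$, geodesic convexity toward a point $q$ with $\Phi_x(q)<L$ forces $\parallel\nabla\Phi_x(c(t))\parallel\geq \delta/d(c(t),q)$, while Cauchy--Schwarz bounds $d(c(t),q)$ by $C\sqrt{t}$, so that $\int\parallel\nabla\Phi_x(c(t))\parallel^2\,\mathrm{d}t$ diverges, contradicting the finite total decrease $\Phi_x(c(0))-L$. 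This argument needs neither Lemma \ref{dietmar} nor the Morse--Bott property, requires no case distinction, and is really a general fact about complete gradient flows of geodesically convex functions on complete Riemannian manifolds; what it does not yield (and the statement does not require) is the convergence of the trajectory itself to a critical point when $\Crit\Phi_x\neq\emptyset$, which the paper's compactness argument does provide and which is used later in the paper.
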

\begin{proof}
By Lemma $6$ in \cite{bilio-zedda}, see also Lemma $2.19$ p. $1115$, $\Phi_x$ is convex along geodesics. By Proposition $9$ p.2191 it follows that a critical points of $\Phi_x$ are the elements $\pi(g) \in G/K$ such that $\tilde \fun_\liep (g^{-1} x)=0$. If $gK=\pi(g)$ is a critical point, then
\[
\mathrm{Hess}(\Phi_x)(\mathrm{d} L_g (\xi), \mathrm{d} L_g (\xi))= \frac{\mathrm{d^2}}{\mathrm{dt}^2 }\bigg \vert_{t=0}
      \Phi_x (g\exp(t\x)) =  \frac{\mathrm{d^2}}{\mathrm{dt}^2 }\bigg \vert_{t=0} \log \left( \parallel \exp(- t \xi ) g^{-1} x \parallel \right) \geq 0,
\]
and it is $0$ if and only if $\exp(t\xi) \subset G_{g^{-1}x}$. Let $g\in \mathrm{Crit} \Phi_x$. By Hadamard-Cartan Theorem,
the map
\[
\liep \lra G/K, \qquad \xi \mapsto g\exp(\xi),
\]
is a diffemorphism. Theretofore $g\exp(\xi) \in \mathrm{Crit} \Phi_x$ if and only if $\tilde \fun_\liep (\exp (-\xi) g^{-1} x)=0$. Since
$\tilde \fun_\liep (g^{-1} x)=0$, by Proposition \ref{compatible} it follows that $\exp(t\xi) \in G_{g^{-1}x}$. This implies
\[
\mathrm{Crit} \Phi_x =\{\pi(g\exp(t\xi)):\, \exp(t\xi) \in G_{g^{-1} x} \}
\]
and so it is submanifold and the Kernel of the Hessian. Therefore $\Phi_x$ is a Morse-Bott function.

The gradient of $\Psi_x :G \lra \R$ is given by $\nabla \Psi_x (\pi(g))=\mathrm{d} \pi_g (v_x (g))$. Hence  the negative gradient flow of $\Psi_x : G/K \lra \R$ lifts to the negative gradient flow of $\Psi_x:G \lra \R$. By Lemma \ref{scambio-gradienti} the map $\Theta_x$ intertwines the gradient of $\Psi_x$ with $\nabla f$. Therefore the negative gradient flow of $\Psi_x :G \lra \R$  satisfies the differential equation \ref{equazione}. Vice-versa if $g:\R \lra G$ satisfies the equation \ref{equazione}, then one may prove that  $\pi \circ g$ is the negative gradient flow of $\Psi_x : G/K \lra \R$.

Let $c_1,c_2:\R \lra M$ be negative gradient flow  of $\Phi_x$. Then there exist $g_0,g_1:\R \lra G$ solution of \ref{equazione} such that $c_0=\pi \circ g_0$ and $c_1 =\pi \circ g_1$.  Since $G=K\exp(\liep)$, there exist $\xi:\R \lra \liep$ and $k:\R \lra K$ such that
$
g_1 (t)=g_0 (t)\exp(\xi(t))k(t).
$
Write
\[
H:\R \times \R \lra G/K, \qquad H(t,s)=\pi(g_0(t)\exp(s\xi(t)).
\]
In the sequel we denote by $H_t (s)=H(t,s)$. The curve $s\mapsto H_t(s)$ is the unique geodetic joining $c_0(t)$ and $c_1(t)$. By Lemma \ref{dietmar} the function
\[
\rho(t)=d_{G/K}(c_0(t),c_1(t))=\parallel \xi(t) \parallel,
\]
is nonincreasing. Assume $\mathrm{Crit} \Phi_x$ is not empty. Hence we may assume $g_0 (0) \in \mathrm{Crit} \Phi_x$ and so the curve $c_0$ is constant. Since $\rho$ is nonincreasing, the image $c_1$ is contained in a compact subset. This implies, keeping in mind that $\Phi_x$ is Morse-Bott,   the limit $\lim_{t\mapsto +\infty} c_1 (t) \in \mathrm{Crit} \Phi_x$ and so item $(c)$ holds. In particular, every negative gradient flow converges to a critical point and so $\Phi_x$ has a global minimum.  Now, assume that $\Phi_x$ does not have any critical point. Assume by contradiction
\[
\lim_{t\mapsto +\infty} \Phi (c_0 (t)) =a > \mathrm{Inf}_{G/K} \Phi_x.
\]
Hence $\Phi (c_0 (t))$ is bounded from below. We may choose $c_0$ such way $\Phi_x (c_1 (0))<a$. By Lemma \ref{dietmar},  $\rho$ is nonincreasing and so there exists $C>0$ such that $\rho (t)=\parallel \xi (t) \parallel \leq C$. Hence
\[
\begin{split}
\desus \Phi_x (H_t (s))&=(\mathrm{d} \Phi_x)_{c_0 (t)}(\dot{H}_t (0)) \\
                                &=-\langle \tilde \fun_\liep (g_0(t)^{-1}x),\xi(t)\rangle \\
                                &-\geq \parallel  \tilde \fun_\liep (g_0(t)^{-1}x) \parallel \parallel \xi (t) \parallel \\
                                &-\geq C \parallel  \tilde \fun_\liep (g_0(t)^{-1}x) \parallel.
\end{split}
\]
Since for $t$ fixed, $H_t (s)$ is a geodesic, it follows that the function $s \mapsto \Phi_x (H_t (s))$ is convex and so
its derivative $\deriva{s}{s} \Phi_x (H_t (s))$ increases. Hence
\[
\begin{split}
\Phi_x (c_1 (t))&=\Phi_x (H_t (1))) \\
                    &=\Phi_x(H_t (0)) + \int_0^1 \deriva{s}{s} \Phi_x (H_t (s)) \mathrm{ds} \\
                    & \geq \Phi_x (c_0 (t))-C  \parallel \tilde \fun_\liep (g_0(t)^{-1}x) \parallel.
\end{split}
\]
Now, the function $\Phi \circ c_1$ is bounded and $\deriva{t}{t} (\Phi_x \circ c_0)=- \parallel\fun_\liep (g_0(t)^{-1}x) \parallel$. Therefore,  there exists a sequence $t_i \mapsto +\infty$ such that $\deriva{t}{t=t_i} (\Phi_x \circ c_0)=- \parallel\fun_\liep (g_0(t_i)^{-1}x) \parallel$ goes to $0$.
This implies
\[
\lim_{t_i \mapsto +\infty} \Phi_x (c_1 (t_i))\geq  \Phi_x (c_0 (t_i))\geq a,
\]
a contradiction since $(\Phi_x (c_o(t))<a$ and so $\lim_{t_i \mapsto +\infty} \Phi_x (c_1 (t_i))<a$.
\end{proof}
Now, we are able to prove the Hilbert-Mumford criterion for real reductive groups. We start recalling a well-known numerical criterion, a proof is given in \cite{bilio-zedda}, and some technical lemmata.
\begin{teo}\label{numerical-criteria}
Let $x\in \mathbb P(V)$. Then $x$ is semistable if and only if for any $\xi \in \liep$, $\lambda (x, \xi)\geq 0$;
\end{teo}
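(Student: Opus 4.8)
The plan is to translate the criterion into the behaviour at infinity of a convex function on the symmetric space $G/K$. Fix a representative $\tilde x\in V$ of $[x]$ and recall the function $\Phi_x(gK)=\log(\parallel g^{-1}\tilde x\parallel/\parallel\tilde x\parallel)$, which by Theorem \ref{Kemf-Ness-Morse} is convex along geodesics and whose negative gradient flow $c(t)$ lifts to $g(t)\in G$ and projects, via $\Theta_x$, to the negative gradient flow $x(t)=g(t)^{-1}x$ of $f=\tfrac12\parallel\tilde\fun_\liep\parallel^2$ on $\mathbb P(V)$, with $\tfrac{d}{dt}\Phi_x(c(t))=-\parallel\tilde\fun_\liep(x(t))\parallel^2$ by Lemma \ref{scambio-gradienti}. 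The first step is the equivalence ``$[x]$ semistable $\iff$ $\Phi_x$ bounded below on $G/K$''. If $\Phi_x$ is bounded below, then $\int_0^\infty\parallel\tilde\fun_\liep(x(t))\parallel^2\,dt=\Phi_x(c(0))-\lim_{t\to+\infty}\Phi_x(c(t))<\infty$, and since $x(t)\to x_\infty$ (Theorem \ref{loja}) this forces $\tilde\fun_\liep(x_\infty)=0$ with $x_\infty\in\overline{G\cdot[x]}$, i.e.\ $[x]$ is semistable. Conversely, if $[x]$ is semistable then $\tilde\fun_\liep(x_\infty)=0$ by Theorem \ref{limite}, so $f(x_\infty)=0$; feeding this into the Lojasiewicz gradient inequality that underlies Theorem \ref{loja} ($(f(x(t)))^{1-\gamma}\le\alpha\parallel\dot x(t)\parallel$ with $\tfrac12<\gamma<1$) and using $\int_0^\infty\parallel\dot x(t)\parallel\,dt<\infty$ gives $\int_0^\infty\parallel\tilde\fun_\liep(x(t))\parallel^2\,dt<\infty$, so $\Phi_x(c(t))$ is bounded below and hence, by Theorem \ref{Kemf-Ness-Morse}(c), so is $\Phi_x$ on all of $G/K$. (Alternatively this equivalence is the abstract Kempf--Ness correspondence of \cite{bilio-zedda}, see also \cite{rs,fb}.) It thus remains to prove that $\Phi_x$ is bounded below on $G/K$ if and only if $\lambda(x,\xi)\ge0$ for every $\xi\in\liep$.

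For this one uses that the geodesic rays of $G/K$ out of $eK$ are precisely the curves $t\mapsto\exp(t\xi)K$, $\xi\in\liep$; along such a ray $t\mapsto\Phi_x(\exp(t\xi)K)=\log(\parallel\exp(-t\xi)\tilde x\parallel/\parallel\tilde x\parallel)$ is convex in $t$ by (P3), and its right derivative at $+\infty$ equals minus the smallest eigenvalue of $\xi$ occurring on $\mathrm{supp}\,\tilde x$, which is $\lambda(x,-\xi)$ by the very definition of the maximal weight. Hence the set of asymptotic slopes of $\Phi_x$ along the rays out of a (hence any) base point is exactly $\{\lambda(x,\xi):\xi\in\liep\}$, and the claim becomes: \emph{a geodesically convex function on the Hadamard manifold $G/K$ is bounded below if and only if all of its asymptotic slopes are $\ge0$.} The implication $\Rightarrow$ is immediate, since a convex function of one real variable which is bounded below has nonnegative right derivative at $+\infty$: applied to $t\mapsto\Phi_x(\exp(t\xi)K)$ this gives $\lambda(x,-\xi)\ge0$, and letting $\xi$ run over $\liep$ yields $\lambda(x,\xi)\ge0$ for all $\xi$.

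The implication $\Leftarrow$ is the heart of the argument, and the point I expect to be the main obstacle. The naive approach — pick $p_n$ with $\Phi_x(p_n)\to-\infty$, extract a limiting direction $\xi_\infty$ of the geodesic segments from $eK$ to $p_n$ — does not by itself produce a direction of negative asymptotic slope, because the slopes along those segments may only tend to $0$. What is needed is the sharper fact that $\Phi_x$ actually has a finite limit along every ray of asymptotic slope $0$; this holds because $\tfrac{d}{dt}\Phi_x(\exp(t\xi)K)$ converges to its limit \emph{exponentially fast}, at a rate governed by the spectral gap of $\xi$ — the ``ray analogue'' of the Lojasiewicz estimate of Theorem \ref{loja}. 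I would carry this out using the theory of convex functions at infinity on Hadamard manifolds developed in \cite{bilio-ghigipr} (see also \cite{bgh-israel-p,biliotti-ghigi-heinzner-2}), which is tailored precisely to convert a negative slope at infinity into a genuine one--parameter subgroup $\exp(t\xi)$ along which $\parallel\exp(t\xi)\tilde x\parallel\to0$; once that input is available, the equivalence — and with it Theorem \ref{numerical-criteria}, together with the Hilbert--Mumford statement for the point $[x]$ — follows.
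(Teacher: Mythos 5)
The paper does not actually prove Theorem \ref{numerical-criteria}; it explicitly quotes it from \cite{bilio-zedda}, so your argument has to stand on its own. Its first half does stand: the equivalence ``$x$ semistable $\iff \Phi_x$ bounded below on $G/K$'' is correct and is essentially Lemma \ref{semistable-projecive-vector} combined with Theorems \ref{loja}, \ref{limite} and \ref{Kemf-Ness-Morse}, and the implication ``$\Phi_x$ bounded below $\Rightarrow \lambda(x,\xi)\geq 0$ for all $\xi$'' is an immediate consequence of convexity along the rays $t\mapsto \exp(t\xi)K$.

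The gap is exactly where you locate it, and your proposed repair does not close it. For ``$\lambda(x,\xi)\geq 0$ for all $\xi \Rightarrow \Phi_x$ bounded below'' you offer (i) that $\Phi_x$ has a finite limit along every ray of asymptotic slope $0$, and (ii) an appeal to \cite{bilio-ghigipr}. Point (i) is true here (the derivative $\frac{\mathrm d}{\mathrm dt}\tilde\Psi(x,\exp(t\xi))$ converges to $\lambda(x,\xi)$ exponentially fast, with rate given by the spectral gap of $\xi$ on $\supp\tilde x$), but it only yields a lower bound for $\Phi_x$ on each \emph{individual} ray, of size governed by that spectral gap; the gap degenerates as $\xi$ runs over the unit sphere of $\liep$, and the support of $k^{-1}\tilde x$ jumps as $k$ varies in $K$, so nothing in your argument makes these ray-wise bounds uniform. ``Bounded below on every ray through a point'' does not imply ``bounded below on $G/K$'' without such uniformity; indeed for a general geodesically convex function a nonnegative asymptotic slope does not even bound the function below on that single ray (e.g.\ $-\log(1+t)$), so the whole weight of the theorem sits precisely in this step. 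As for (ii), \cite{bilio-ghigipr} concerns the abelian convexity theorem and contains no statement of this kind; what you need is the hard half of the Kempf--Ness/Hilbert--Mumford correspondence, proved in \cite{bilio-zedda} (following \cite{mundet-Trans,teleman-symplectic-stability}) via semicontinuity and convexity properties of $\xi\mapsto\lambda(x,\xi)$ on the boundary at infinity of $G/K$, or in \cite{georgula} via the limit $x_\infty$ of the flow of $-\nabla f$: if $\beta=\tilde\fun_\liep(x_\infty)\neq 0$, one shows $\lambda\bigl(x,-\beta/\|\beta\|\bigr)\leq -\|\beta\|<0$. Since you already have the flow machinery of Theorems \ref{loja}--\ref{Kemf-Ness-Morse} in place, that last route would be the natural way to finish; as written, the proof is incomplete.
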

\begin{lemma}\label{lemma1}
Let $x\in \mathbb P(V)$ and let $\tilde x\in V$ such that $\pi(\tilde x)=x$. Let $\alpha_1 > \cdots >\alpha_k $ be the eigenvalues of $\xi$. We denote by $V_i$ the corresponding eigenspaces. Write $\tilde x=v_1+\cdots+v_k$. Then
$
\lambda(x,\xi)=\alpha_j,
$
where $j=\mathrm{min}   \{1,\ldots k\}$ such that $v_j\neq 0$.
Moreover, $\lim_{t\mapsto +\infty} \exp(t\xi ) \tilde x=0$ if and only if $\alpha_j<0$.
\end{lemma}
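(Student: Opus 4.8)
The plan is to read off the maximal weight directly from the explicit form of the Kempf--Ness function $\tilde\Psi$ on $\mathbb P(V)$. Since $\xi\in\liep$ is symmetric, the eigenspaces $V_1,\dots,V_k$ are mutually orthogonal, so writing $\tilde x=v_1+\cdots+v_k$ with $v_i\in V_i$ we have $\exp(t\xi)\tilde x=\sum_{i=1}^k e^{t\alpha_i}v_i$ and hence
\[
\parallel\exp(t\xi)\tilde x\parallel^2=\sum_{i=1}^k e^{2t\alpha_i}\parallel v_i\parallel^2 .
\]
First I would substitute this into $\tilde\Psi(x,\exp(t\xi))=\frac{1}{2}\log\parallel\exp(t\xi)\tilde x\parallel^2-\log\parallel\tilde x\parallel$ and differentiate, obtaining
\[
\frac{\mathrm{d}}{\mathrm{dt}}\tilde\Psi(x,\exp(t\xi))=\frac{\sum_{i=1}^k \alpha_i e^{2t\alpha_i}\parallel v_i\parallel^2}{\sum_{i=1}^k e^{2t\alpha_i}\parallel v_i\parallel^2},
\]
which is the convex combination of the eigenvalues $\alpha_i$ with weights proportional to $e^{2t\alpha_i}\parallel v_i\parallel^2$; this is consistent with the fact recalled in Section~\ref{sec:abstract-setting} that this derivative is nondecreasing in $t$, so that $\lambda(x,\xi)$ is well defined as its limit.

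Next I would let $t\to+\infty$. With $j=\min\{i:\ v_i\neq 0\}$, dividing numerator and denominator by $e^{2t\alpha_j}$ annihilates every term with $i>j$ (there $\alpha_i<\alpha_j$, so $e^{2t(\alpha_i-\alpha_j)}\to 0$) while the $i=j$ term survives; since $v_i=0$ for $i<j$, the limit is $\alpha_j\parallel v_j\parallel^2/\parallel v_j\parallel^2=\alpha_j$. By the definition of the maximal weight this yields $\lambda(x,\xi)=\alpha_j$.

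For the last assertion I would argue on norms: the same computation gives $\parallel\exp(t\xi)\tilde x\parallel^2=e^{2t\alpha_j}\bigl(\parallel v_j\parallel^2+\sum_{i>j}e^{2t(\alpha_i-\alpha_j)}\parallel v_i\parallel^2\bigr)$, and the parenthesized factor tends to $\parallel v_j\parallel^2\neq 0$. Hence $\parallel\exp(t\xi)\tilde x\parallel\to 0$ if and only if $e^{2t\alpha_j}\to 0$, i.e.\ if and only if $\alpha_j<0$; since convergence $\exp(t\xi)\tilde x\to 0$ in $V$ is equivalent to $\parallel\exp(t\xi)\tilde x\parallel\to 0$, the claim follows. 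I do not expect a genuine obstacle: the only points requiring care are the orthogonality of the eigenspaces (so there are no cross terms in the norm) and the ordering convention $\alpha_1>\cdots>\alpha_k$, which makes $\alpha_j$ the largest eigenvalue occurring in $\tilde x$.
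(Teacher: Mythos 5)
Your proof is correct and follows essentially the same route as the paper: both compute $\frac{\mathrm{d}}{\mathrm{dt}}\tilde\Psi(x,\exp(t\xi))$ as the weighted average of the eigenvalues with weights $e^{2t\alpha_i}\parallel v_i\parallel^2$, divide by $e^{2t\alpha_j}$ to extract the limit $\alpha_j$, and read off the norm asymptotics $\parallel\exp(t\xi)\tilde x\parallel^2=\sum_i e^{2t\alpha_i}\parallel v_i\parallel^2$ for the last assertion. Your write-up is in fact slightly more careful about the orthogonality of the eigenspaces and the equivalence of norm convergence with convergence in $V$.
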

\begin{proof}
\[
\begin{split}
\lambda(x,\xi)&=\lim_{t\mapsto +\infty} \desudt \tilde \Psi(x,\exp(t\xi)) \\
              &=\lim_{t\mapsto +\infty} \frac{\alpha_j e^{2\alpha_i t} \parallel v_j \parallel^2 + \cdots + \alpha_k e^{2\alpha_k t} \parallel v_k \parallel^2 }{ e^{2\alpha_j t } \parallel v_j \parallel^2 + \cdots + e^{2\alpha_k t} \parallel v_k \parallel^2} \\
              &=\lim_{t\mapsto +\infty} \frac{\alpha_j  \parallel v_j \parallel^2 + \alpha_{j+1} e^{2(\alpha_{j+1} -\alpha_j) t} \parallel v_{j+1} \parallel^2+ \cdots + \alpha_k e^{2(\alpha_k -\alpha_j) t} \parallel v_k \parallel^2 }{ \parallel v_j \parallel^2 + e^{2(\alpha_{j+1}-\alpha_j)t} \parallel v_{j+1} \parallel^2 \cdots + e^{2(\alpha_k-\alpha_j) t} \parallel v_k \parallel^2} \\
              &=\alpha_j
\end{split}
\]
Since
\[
\parallel \exp(t\xi) \tilde x \parallel^2 = \sum_{i=1}^k e^{2t\alpha_i} \parallel v_i \parallel^2,
\]
it follows that $\exp(t\xi) \tilde x \mapsto 0$ if and only if $\alpha_j <0$. Otherwise the limit does not exist or $\exp(t\xi) \in G_{\tilde x}$.
\end{proof}
\begin{lemma}\label{semistable-projecive-vector}
Let $x\in \mathbb P(V)$ be a semistable point. Let $\tilde x \in V$ be such that $\pi(\tilde x)=x$. Then $0\notin \overline{G\cdot \tilde x}$.
\end{lemma}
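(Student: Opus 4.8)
The plan is to translate the statement into boundedness from below of the Kempf-Ness function $\Phi_x\colon G/K\to\R$ studied in Theorem \ref{Kemf-Ness-Morse}. Fix a representative $\tilde x$ of $x$, so that $\Phi_x(gK)=\log(\|g^{-1}\tilde x\|/\|\tilde x\|)$. Since $g\mapsto g^{-1}$ is a bijection of $G$ and the norm is continuous, $\inf_{G/K}\Phi_x=\log(\inf_{v\in\overline{G\cdot\tilde x}}\|v\|/\|\tilde x\|)$; hence $0\notin\overline{G\cdot\tilde x}$ is equivalent to $\inf_{G/K}\Phi_x>-\infty$, and it suffices to prove that semistability of $x$ forces $\Phi_x$ to be bounded below.

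To get a handle on this infimum I would run the negative gradient flow $x(t)$, $t\ge 0$, of $f$ on $\mathbb P(V)$ with $x(0)=x$, lift it as in Lemma \ref{equazione-sollevata} to $g\colon\R\to G$ with $g(0)=e$, so that $x(t)=g(t)^{-1}x$, and use that (by the proof of Theorem \ref{Kemf-Ness-Morse}) $c(t):=\pi(g(t))$ is the negative gradient flow of $\Phi_x$ through $eK$. By Theorem \ref{Kemf-Ness-Morse}(c), $\inf_{G/K}\Phi_x=\lim_{t\to\infty}\Phi_x(c(t))$. On the other hand Lemma \ref{scambio-gradienti} (together with the left-invariance of the metric and the fact that the gradient map is $\liep$-valued, hence horizontal for $G\to G/K$) gives $\|\nabla\Phi_x(c(t))\|^2=\|\tilde\fun_\liep(x(t))\|_\lieg^2=2f(x(t))$, so that, since $\Phi_x(c(0))=\Phi_x(eK)=0$,
\[
\inf_{G/K}\Phi_x \;=\; -\int_0^\infty\|\nabla\Phi_x(c(t))\|^2\,dt \;=\; -2\int_0^\infty f(x(t))\,dt .
\]
Thus the whole statement reduces to showing $\int_0^\infty f(x(t))\,dt<\infty$ when $x$ is semistable.

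For this I would use the Lojasiewicz ingredients already present in Theorems \ref{loja} and \ref{limite}. Semistability means $\overline{G\cdot x}$ meets $\tilde\fun_\liep^{-1}(0)$, so by Theorem \ref{limite} the limit $x_\infty=\lim_t x(t)$ (which exists by Theorem \ref{loja}) satisfies $\tilde\fun_\liep(x_\infty)=0$, i.e.\ $f(x_\infty)=0$ and $x_\infty\in\Crit f$; hence $\|\dot x(t)\|=\|\nabla f(x(t))\|\to 0$. Since $f$ is analytic, the Lojasiewicz gradient inequality at $x_\infty$ yields $\gamma\in(0,1)$, $C>0$ and a neighbourhood $U\ni x_\infty$ with $f(x)^\gamma\le C\|\nabla f(x)\|$ on $U$ (using $f(x_\infty)=0$); along the flow, for $t$ large enough that $x(t)\in U$ and $\|\nabla f(x(t))\|\le 1$, and because $1/\gamma\ge 1$,
\[
f(x(t))\;\le\;C^{1/\gamma}\|\nabla f(x(t))\|^{1/\gamma}\;\le\;C^{1/\gamma}\|\dot x(t)\| .
\]
Since $\int_0^\infty\|\dot x(t)\|\,dt<\infty$ by Theorem \ref{loja} and the contribution of a compact initial interval is finite, $\int_0^\infty f(x(t))\,dt<\infty$; therefore $\inf_{G/K}\Phi_x>-\infty$ and $0\notin\overline{G\cdot\tilde x}$.

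The delicate point — and the only place where semistability is genuinely needed — is the comparison between the intrinsic gradient norm $\|\nabla f(x(t))\|$ on $\mathbb P(V)$, which Theorem \ref{loja} controls, and the Lie-algebra norm $\|\tilde\fun_\liep(x(t))\|_\lieg=\sqrt{2f(x(t))}$ that actually governs the decay of $\Phi_x$ along the flow: these are linked only by the possibly degenerate map $\xi\mapsto\xi^\#$, so a direct estimate is unavailable, and it is precisely the Lojasiewicz inequality at the zero $x_\infty$ of $\tilde\fun_\liep$ (where $f$ vanishes) that bridges the gap. A secondary point to check carefully is the bookkeeping identifying $\pi\circ g$ with the negative gradient flow of $\Phi_x$ and the norm identity $\|\nabla\Phi_x(gK)\|=\|\tilde\fun_\liep(g^{-1}\tilde x)\|_\lieg$, both of which are contained in Lemma \ref{scambio-gradienti} and the proof of Theorem \ref{Kemf-Ness-Morse}.
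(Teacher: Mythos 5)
Your proposal is correct and follows essentially the same route as the paper: run the negative gradient flow of $f$, lift it to $G$, identify $\pi\circ g$ with the negative gradient flow of $\Phi_x$ on $G/K$, use Theorem \ref{limite} and semistability to get $\tilde\fun_\liep(x_\infty)=0$, and then invoke the Lojasiewicz gradient inequality to bound $\|\tilde\fun_\liep(s(t))\|^2=-\tfrac{d}{dt}\Phi_x(c(t))$ by a constant times $\|\dot s(t)\|$, which is integrable by Theorem \ref{loja}, so that $\Phi_x$ stays bounded below and $0\notin\overline{G\cdot\tilde x}$. The only difference is cosmetic: the paper chains $2f\le 2f^{\gamma}\le C\|\grad f\|$ directly, while you raise the Lojasiewicz inequality to the power $1/\gamma$; both rest on the same smallness of $f$ and of $\|\nabla f\|$ along the flow for large $t$.
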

\begin{proof}
Let $s:\R \lra \mathbb P(V)$ and let $g:\R \lra G$ so that $s(t)=g(t)^{-1} x$ is the solution of \ref{equazione}. By Theorem \ref{loja}, the limit $\lim_{t\mapsto +\infty} s(t)=x_\infty$ exists and by Theorem \ref{limite} it satisfies
\[
\parallel \tilde \fun_\liep (x_\infty) \parallel=\mathrm{inf}_{y \in \overline{G(x)}} \parallel \tilde \fun_\liep (y) \parallel.
\]
Since $x$ is semistable it follows that $\parallel \tilde \fun_\liep (x_\infty) \parallel=0$. By the Lojasiewicz gradient inequality for $f$, there exist positive constant $\alpha,\beta$ and $\frac{1}{2} < \gamma < 1$ such that
\[
\parallel \tilde \fun_\liep (x) \parallel^2 \leq 2f(x)\leq 2 f(x)^\gamma \leq C \parallel \grad f \parallel = C \parallel \campo (x) \parallel.
\]
Therefore
\[
\parallel \tilde \fun_\liep (s(t)) \parallel^2 =- \frac{\mathrm{d}}{\mathrm{dt}}\Phi_{[x]} \circ c (t)  \leq C \parallel \dot{s}(t) \parallel,
\]
where $c(t)=\pi \circ g(t)$ is the negative gradient flow of $\Phi_x:G/K \lra \R$.
By Theorem \ref{loja} the function $\parallel \dot{s} (t) \parallel$ is integrable over $\R^{+}$, and so the limit
\[
\lim_{t\mapsto +\infty} \Phi_{[x]} \circ c (t)=a \in \R.
\]
By Theorem \ref{Kemf-Ness-Morse} the function $\Phi_x$ is bounded below. Since $\Phi_{[x]}(g)=\left(\frac{\parallel g^{-1} \tilde x \parallel}{\parallel \tilde x \parallel} \right)$, we get  $0\notin \overline{G\cdot \tilde x}$ concluding the proof.
\end{proof}
\begin{teo}[Hilbert-Mumford criterion for reductive groups]\label{hmr}
Let $y\in V$ and assume that $0\in \overline{G\cdot y}$. Then there exists $\xi \in \liep $ such that $\lim_{t\mapsto +\infty} \exp(t\xi ) y= 0$.
\end{teo}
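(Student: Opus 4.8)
The plan is to reduce the vector-space statement to the semistability picture on $\PP(V)$ already developed in this section. First I would dispose of the case $y=0$ trivially (any $\xi$, e.g.\ $\xi=0$, works, since $\exp(t\xi)$ is linear) and assume $y\neq 0$, so that $x:=\pi(y)\in\PP(V)$ is defined. The key observation is that the hypothesis $0\in\overline{G\cdot y}$ forces $x$ to be \emph{not} semistable: this is exactly the contrapositive of Lemma~\ref{semistable-projecive-vector}. All of the analytic content --- the Lojasiewicz convergence of the negative gradient flow of $f$ (Theorem~\ref{loja}), the identification of the limit in Theorem~\ref{limite}, and the Morse--Bott/bounded-below properties of $\Phi_x$ (Theorem~\ref{Kemf-Ness-Morse}) --- has therefore already been absorbed into that lemma, so no algebraic input is needed at this stage.

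Once $x$ is known to be non-semistable, I would invoke the numerical criterion (Theorem~\ref{numerical-criteria}): the failure of ``$\lambda(x,\xi)\ge 0$ for all $\xi\in\liep$'' produces a genuine destabilizing direction, i.e.\ some $\xi\in\liep$ with $\lambda(x,\xi)<0$ (a strict inequality, so automatically $\xi\neq 0$). To turn this into the desired limit I would compute the maximal weight explicitly via Lemma~\ref{lemma1}: writing the eigenvalues of $\xi$ acting on $V$ as $\alpha_1>\cdots>\alpha_k$, with eigenspaces $V_1,\dots,V_k$, and decomposing $y=v_1+\cdots+v_k$ with $v_i\in V_i$, that lemma gives $\lambda(x,\xi)=\alpha_j$ where $j$ is the smallest index with $v_j\neq 0$. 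Hence $\alpha_j<0$, and the second assertion of Lemma~\ref{lemma1} says precisely that this is equivalent to $\lim_{t\mapsto +\infty}\exp(t\xi)y=0$, which finishes the argument.

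I do not expect a serious obstacle in Theorem~\ref{hmr} itself: the proof is a short chain of already-established facts (Lemma~\ref{semistable-projecive-vector} $\Rightarrow$ $x$ non-semistable $\Rightarrow$ Theorem~\ref{numerical-criteria} yields a destabilizing $\xi$ $\Rightarrow$ Lemma~\ref{lemma1} converts this into $\exp(t\xi)y\to 0$). The points that need care are purely formal: checking that ``$x$ non-semistable'' is the correct negation of the statement of Theorem~\ref{numerical-criteria} (so that we really get $\lambda(x,\xi)<0$, not merely $\le 0$), and verifying that there is no circularity --- none of Theorems~\ref{loja}, \ref{limite}, \ref{numerical-criteria}, \ref{Kemf-Ness-Morse} or Lemmas~\ref{lemma1}, \ref{semistable-projecive-vector} uses the Hilbert--Mumford criterion for reductive groups. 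The genuine difficulty of the whole section is concentrated in Lemma~\ref{semistable-projecive-vector} and the Morse theory of $f$ and $\Phi_x$ feeding into it, not in this final deduction.
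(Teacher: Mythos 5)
Your proposal is correct and follows exactly the paper's own argument: Lemma \ref{semistable-projecive-vector} shows $\pi(y)$ is not semistable, Theorem \ref{numerical-criteria} then yields $\xi\in\liep$ with $\lambda(\pi(y),\xi)<0$, and Lemma \ref{lemma1} converts this strict negativity into $\lim_{t\mapsto+\infty}\exp(t\xi)y=0$. The only difference is your explicit (and harmless) handling of the trivial case $y=0$, which the paper omits.
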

\begin{proof}
Let $y\in V$ such that $0\in \overline{G\cdot y}$. By Lemma \ref{semistable-projecive-vector} the point $\pi(y)$ is not semistable in $\mathbb P(V)$. By Theorem \ref{numerical-criteria} there exists
$\xi \in \liep $ such that $\lambda_x (\pi(y),\xi)<0$. By Lemma \ref{lemma1} we get $\lim_{t\mapsto +\infty} \exp(t\xi ) y=0$ concluding the proof.
\end{proof}

\end{document}